\newtheorem{thm}{Theorem}[subsection]
\newtheorem{cor}[thm]{Corollary}
\newtheorem{lem}[thm]{Lemma}
\newtheorem{pro}[thm]{Proposition}
\theoremstyle{remark}
\newtheorem{rem}[thm]{Remark}
\newtheorem{irem}{Remark}
\newtheorem*{ack}{Acknowledgements}
\newtheorem*{nota}{Notation}
\theoremstyle{definition}
\newtheorem{defi}[thm]{Definition}
\newtheorem{ex}[thm]{Example}
\DeclareMathOperator{\tr}{tr}
\DeclareMathOperator{\res}{Res}
\DeclareMathOperator{\obj}{Obj}
\let \hom \relax
\DeclareMathOperator{\hom}{Hom}
\DeclareMathOperator{\spec}{Spec}
\DeclareFontFamily{OT1}{pzc}{}
\DeclareFontShape{OT1}{pzc}{m}{it}{<-> s * [1.20] pzcmi7t}{}
\DeclareMathAlphabet{\mathpzc}{OT1}{pzc}{m}{it}
\newcommand{\sym}{\mathpzc{Sym}}
\newcommand{\shgs}{\sym \llbracket (H^G)^\ast \rrbracket}
\newcommand{\skgs}{\sym \llbracket (K^G)^\ast \rrbracket}
\newcommand{\snhs}{\sym^n H^\ast}
\newcommand{\snhgs}{\sym^n (H^G)^\ast}
\newcommand{\hgn}{(H^G)^{\otimes n}}
\newcommand{\hgs}{[(H^G)^\ast]}
\newcommand{\hgsn}{[(H^G)^\ast]^{\otimes n}}
\newcommand{\pn}{\phi^{\otimes n}}
\newcommand{\br}{\mathpzc{Br}}
\newcommand{\brhs}{\mathpzc{Br} \llbracket H^\ast \rrbracket}
\newcommand{\brh}{\mathpzc{Br} \llbracket H \rrbracket}
\newcommand{\brk}{\mathpzc{Br} \llbracket K \rrbracket}
\newcommand{\brks}{\mathpzc{Br} \llbracket K^\ast \rrbracket}
\newcommand{\brnh}{\mathpzc{Br}^n H}
\newcommand{\brnhs}{\mathpzc{Br}^n H^\ast}
\newcommand{\brmh}{\mathpzc{Br}^m H}
\newcommand{\brnk}{\mathpzc{Br}^n K}
\newcommand{\brmk}{\mathpzc{Br}^m K}
\newcommand{\brnch}{\mathpzc{Br}^n_C H} 
\newcommand{\brngch}{\mathpzc{Br}^n_{g \cdot C} H} 
\newcommand{\hsn}{(H^\ast)^{\otimes n}}
\newcommand{\hn}{H^{\otimes n}}
\newcommand{\hm}{H^{\otimes m}}
\newcommand{\hk}{H^{\otimes k}}
\newcommand{\kn}{K^{\otimes n}}
\newcommand{\zz}{\mathbb{Z}/2\mathbb{Z}}
\newcommand{\hs}{\llbracket H^\ast \rrbracket}
\newcommand{\symnhs}{\mathpzc{Sym}^n H^\ast}
\newcommand{\symhs}{\mathpzc{Sym}\llbracket H^\ast \rrbracket}
\newcommand{\symks}{\mathpzc{Sym}\llbracket K^\ast \rrbracket}
\newcommand{\Th}{\mathpzc{T} \llbracket H \rrbracket}
\newcommand{\ths}{\mathpzc{T} \llbracket H^\ast \rrbracket}
\newcommand{\sn}{\mathpzc{S}_n}
\newcommand{\s}{\mathpzc{S}}
\newcommand{\dcg}{D(k[G])}
\newcommand{\md}{\mathbf{Mod}}
\newcommand{\B}{\mathpzc{B}}
\newcommand{\bn}{\B_n}
\newcommand{\g}{\mathcal{G}}
\newcommand{\gn}{\mathcal{G}^n}
\newcommand{\C}{\mathbb{C}}
\newcommand{\bs}{\boldsymbol}
\newcommand{\bm}{\boldsymbol m}
\newcommand{\bg}{\boldsymbol \gamma}
\DeclareMathOperator{\bc}{\mathbf{C}}
\DeclareMathOperator{\bcf}{\bc_{fin}}
\DeclareMathOperator{\vc}{\mathbf{Vect}_\C}
\newcommand{\igs}{(i^G)^\ast}
\newcommand{\ies}{(i_e)^\ast}
\newcommand{\xh}{\mathfrak{X}H}
\newcommand{\mgzb}[1]{\overline{M}^G_{0,#1}}
\newcommand{\mzb}[1]{\overline{M}_{0,#1}}
\DeclareMathOperator{\sttilde}{\widetilde{st}}
\newcommand{\stt}[1][]{\sttilde_{#1}}
\DeclareMathOperator{\sthat}{\widehat{st}}
\newcommand{\sth}[1][]{\sthat_{#1}}
\DeclareMathOperator*{\st}{st}
\renewcommand{\bar}{\overline} 
\renewcommand{\b}[1]{\boldsymbol{#1}}
\newcommand{\bcdot}{\boldsymbol \cdot}
\newcommand{\I}{\mathcal{I}}
\DeclareMathOperator{\Span}{Span}
\newcommand{\CR}[1]{H^\ast_{\text{CR}} \negthinspace \left(#1 \right)}
\newcommand{\Hs}[1]{H^\ast \negthinspace \left( #1 \right)}
\begin{document}

\title{$G$-Frobenius Manifolds}
\author{Byeongho Lee}

\begin{abstract}
 The goal of this paper is to introduce the notion of $G$-Frobenius manifolds for
any finite group $G$. This work is motivated by the fact that
any $G$-Frobenius algebra yields an ordinary Frobenius algebra
by taking its $G$-invariants.
We generalize this on the level of Frobenius manifolds.

To define a $G$-Frobenius manifold as a braided-commutative generalization 
of the ordinary commutative Frobenius manifold, we develop the theory
of $G$-braided spaces. 
These are defined as $G$-graded $G$-modules with certain braided-commutative
``rings of functions'', generalizing the commutative rings of power series on 
ordinary vector spaces.

As the genus zero part of any ordinary cohomological field theory 
of Kontsevich-Manin contains a Frobenius manifold, we show that
any $G$-cohomological field theory defined by Jarvis-Kaufmann-Kimura contains
a $G$-Frobenius manifold up to a rescaling of its metric.

Finally, we specialize to the case of $G = \zz$ and prove the
structure theorem for (pre-)$\zz$-Frobenius manifolds.
We also construct an example of a $\zz$-Frobenius manifold using this theorem,
that arises in singularity theory in the hypothetical context of orbifolding.
\end{abstract}

\maketitle

\section{Introduction}
\label{sec:intro}

There are several statements in the philosophy of mirror symmetry.
One of them involves Frobenius manifolds.
Namely, the Frobenius manifolds of a mirror pair should be isomorphic.

Quantum cohomology ring of a smooth projective variety $X$
is an example of a Frobenius manifold in the A-model.
It encodes the genus zero part of the Gromov-Witten invariants of $X$
on its cohomology ring.

According to Costello\cite{costello}, genus $g$ GW-invariants of $X$
can be written in terms of the genus zero GW-invariants of $[X^{g+1}/S_{g+1}]$,
the $(g+1)$th symmetric power of $X$.
The latter can be encoded as a Frobenius manifold structure on 
its Chen-Ruan orbifold cohomology ring\cite{oqc, croc}.

Now suppose $X$ has
a finite group $G$-action on it, 
and let $[X/G]$ be the global quotient orbifold.
On the level of Frobenius algebras, Fantechi-G\"ottsche constructed a 
certain braided-commutative ring $\Hs{X,G}$ 
with $G$-action, now called a stringy cohomology ring,
that yields $\CR{[X/G]}$
 by taking the $G$-invariants\cite{fg}.
$\Hs{X,G}$ also contains the cohomology of $X$ as a subring.

$\Hs{X,G}$ is an example of the structure called a $G$-Frobenius algebra 
(also known as crossed $G$-algebra, or Turaev algebra)\cite{orbifolding, ms, turaev}.
Kaufmann generalized the construction in \cite{fg} and showed that
one can produce a certain type of $G$-Frobenius algebras
starting from a Jacobian Frobenius algebra\cite{orbifolding}.
The resulting $G$-Frobenius algebra contains the original Frobenius algebra 
as a subalgebra called the untwisted sector.
In particular, he proved that there is an essentially unique $n$th symmetric 
power, an $S_n$-Frobenius algebra, of each Jacobian Frobenius algebra\cite{sq}.
Note that this result works for both A- and B-model sides of mirror symmetry.

One can then ask if we can generalize his results on the level of 
Frobenius manifolds.
Namely, we want a notion of $G$-Frobenius manifold structure
on each $G$-Frobenius algebra, that contains an ordinary Frobenius
manifold on the untwisted sector and yields another Frobenius manifold
on the $G$-invariants of the underlying $G$-Frobenius algebra.
Once we have this, we should ask if we can construct a $G$-Frobenius
manifold starting from an ordinary Frobenius manifold and the $G$-Frobenius algebra
that extends the underlying Frobenius algebra of the given Frobenius manifold.
In particular, we should investigate if we can construct a symmetric power
of each Frobenius manifold, and how unique it is.

Once the answers to these questions are obtained,
we will be able to apply them to the GW-theory of symmetric powers of $X$.
Starting from the quantum cohomology of $X$, 
we build the $S_n$-Frobenius manifold of $X^n$, and taking its $S_n$-invariants
will yield the orbifold quantum cohomology of the symmetric power.
By Costello's formula, we will essentially know all the higher genus GW-invariants.

We start to answer this series of questions in this paper.
Namely, we define a notion of $G$-Frobenius manifold for each finite group
$G$ and find some examples.

In the definition of ordinary Frobenius manifolds, certain symmetric multilinear
forms called correlation functions play an important role.
Each of these is given as the homogeneous degree $n$ part of the potential of
the given Frobenius manifold, using the well-known identification
between homogeneous polynomials and symmetric multilinear forms.
The underlying Frobenius algebra is obtained by truncating the potential at degree $3$.

On the other hand, the role of $n$-linear forms that are 
invariant under the braid group $B_n$ plays a similar role in the
theory of $G$-Frobenius algebras for $n=2$ and $3$. In fact, a $G$-Frobenius algebra
is a monoid object in the braided monoidal category of $G$-graded $G$-modules, 
$\dcg$-$\md$.
This motivates us to study \emph{braided tensors} in the category.
See Definition~\ref{defi:bten}.  

Once we identify the ring of power series on a finite dimensional vector space as
the ring of symmetric multilinear forms on it,
we are tempted to define a \emph{$G$-braided space} as a finite dimensional
object in $\dcg$-$\md$ with a suitable ring structure on braided multilinear forms as
the ``ring of functions'', following a common practice in the field of 
noncommutative geometry. We do this in Section~\ref{sec:gbraided}. 
We first develop the theory of braided tensors
in certain braided monoidal categories, and specialize to $\dcg$-$\md$.
When $G$ is trivial, we recover the notion
of finite dimensional vector spaces with their commutative ring of
power series.
Theorem~\ref{thm:ringmap} is the main result in this section.

After reviewing the notions of Frobenius manifolds and $G$-Frobenius algebras,
We first define the notion of pre-$G$-Frobenius manifolds in Section~\ref{sec:gfm}.
They are certain $G$-braided spaces containing two Frobenius manifolds
as their subspaces on the untwisted sector and the $G$-invariants. 
If they also contain a $G$-Frobenius algebra,
in a similar way as an ordinary Frobenius manifold contains a Frobenius algebra,
then they are called $G$-Frobenius manifolds.
Definition~\ref{defi:gfm} gives the precise descriptions.

We show that any $G$-Cohomological field theory ($G$-CohFT) of Jarvis-Kaufmann-Kimura\cite{jkk}
contains a pre-$G$-Frobenius manifold structure on its state space
in Section~\ref{sec:corrgcohft}. 
In fact, it also contains
a $G$-Frobenius algebra up to a rescaling of its metric.
To prove this result, we introduce two types of correlation functions
that one can obtain out of any $G$-CohFT: a symmetric one and a
$G$-braided one. We use these and relevant results in \cite{jkk} to prove that 
one can obtain a $G$-braided potential that yields a
symmetric potential for a Frobenius manifold structure on its subspace of $G$-invariants.
See Theorem~\ref{thm:cor} and Remark~\ref{rem:scaling}.

In singularity theory, $A_{2n-3}$ singularities have natural $\zz$-actions.
In physics literature regarding Landau-Ginzburg B-models, 
the Frobenius manifolds for $D_n$ are 
obtained from $A_{2n-3}$ using the procedure of orb\-ifolding\cite{dvv},
but there is no satisfactory mathematical theory about it yet,
except for the level of Frobenius algebras\cite{orbifolding}.
We construct the examples of $\zz$-Frobenius manifolds for $A_{2n-3}$
as possible partial ingredients for such a theory in Section~\ref{sec:example}. 
The first step is to prove the Structure Theorem~\ref{thm:structure}
for any pre-$\zz$-Frobenius manifold. 
After reviewing known facts about the Frobenius manifold and $G$-Frobenius algebra
structures for the singularities $A_{2n-3}$ and $D_n$,
we use this theorem to construct pre-$\zz$-Frobenius manifolds for $A_{2n-3}$.
In fact, it turns out that these are $\zz$-Frobenius manifolds.
Section~\ref{ssec:zzfm} contains the details.

We conclude our paper with a brief discussion of a possible development of the
concept of $G$-braided spaces. In particular, we discuss about a differentiation rule
on the ring of braided multilinear forms.

We expect that other important structures on $G$-Frobenius manifolds will emerge 
in the process of answering the questions above.
Rather than using {\it a priori} constructions,
we will take the approach of picking up necessary 
structures while analyzing more examples and
working on relevant theories.
Although we gave a minimal definition for  $G$-Frobenius manifolds,
it is strong enough for $G=\zz$ in the sense that we see in Theorem~\ref{thm:structure}.

\begin{irem}
  This work is also a starting point for answering 
  one of the Turaev's questions\cite[Appendix 4, No. 2]{turaev}.
\end{irem}

\begin{irem}
  Matsumura-Shapiro considered a different version of
  $G$-braided geometry on $G$-graded $G$-modules using 
  Majid's theory of braided calculus\cite[Chapter 10]{majid}
  to define a different notion of $G$-Frobenius structure on
  them\cite{matsu}.
\end{irem}

\begin{ack}
  We thank Professor R. Kaufmann for all his guidance.
 We are also grateful to Professors A. Libgober and H. Tseng
 for useful comments on this work.
 We also thank Professors C. Cho, R. Cavalieri, T. Jarvis, T. Matsumura, D. Patel, and S. Yeung
 for helpful discussions.
 Conversations with Y. Tsumura and A. Jackson were also helpful.

 A significant portion of this work was supported by Bilsland Dissertation Fellowship
 from the Graduate School of Purdue University.
\end{ack}

\begin{nota}
 Throughout this paper, 
$G$ is a finite group,
$k$ a field of characteristic zero.
We use the Einstein convention, namely, we sum over the
repeated upper and lower indices.
The identity in $G$ is denoted by $e$.
We do not consider the trace axiom of $G$-Frobenius algebras in this paper.
We use boldface letters to denote elements of $G^n$.
For example, $\bm$ means $(m_1, \dots, m_n) \in G^n$.
The identity in $G$ is denoted by $e$.
\end{nota}

\section{$G$-braided Spaces}
\label{sec:gbraided}

\subsection{Functions on Vector Spaces}
\label{ssub:functions}

 We review some well-known facts about ordinary vector spaces.
The goal of this section is to fix notations and emphasize the viewpoint
that will be useful for the generalization we have in mind.

Let $H$ be a finite dimensional vector space over $k$. 
We take the ring of power series $k\hs$ to be its ring of functions.
This is required for the definition of Frobenius manifold.
See Section~\ref{sec:FM}.

It is well known that $k\hs_n$, the homogeneous polynomials of degree $n$,
can be identified with $\symnhs := [\hsn]^{S_n}$, the subspace of invariants
under the natural $S_n$ action, also known as the symmetric $n$-linear forms.
Hence we have isomorphisms of vector spaces
\[
  k\hs \cong \prod_{n=0}^{\infty} \symnhs := \symhs \, .
\]

Also, observe that there is another characterization of $\snhs$, considering
the $S_n$ action on $\hn$. Namely,
\[
\snhs = \{ x \in \hsn |
  x(v) = x(\sigma \cdot v) \ \text{for any} \ \sigma \in S_n \ \text{and} \ 
v \in \hn \} \, .
\]

Note that $k\hs$ also has the structure of a commutative ring.
We transfer this ring structure to $\symhs$.
Note also that $\symhs$ is a vector subspace 
of the complete tensor algebra $\prod_{n=0}^{\infty} \hsn := \ths$.
The latter also has the natural ring structure of juxtaposition, but 
$\symhs$ is not a subring of $\ths$.

If we consider how these two multiplication rules relate to each other 
and view the ring structure of $\symhs$ intrinsically,
we realize that the following map is in action.

\begin{defi}
  \label{defi:symmetrization}
  For any $n \geq 0$, the \emph{$n$th symmetrization on $H$} is the map
  \[
    \sn : \hsn \to \symnhs\, , \quad x \mapsto \frac{1}{|S_n|} 
    \sum_{\sigma \in S_n} \sigma x \, .
  \]
  Also, the \emph{symmetrization on $H$} is the map
  \[
    \s := \prod_{n=0}^{\infty} \sn : \ths \to \symhs \, .
  \]
\end{defi}

We observe that the ring structure on $\symhs$ is given as follows.
Let $\mu$ be the multiplication map in $\ths$, namely, the juxtaposition.
\begin{pro}
  \label{pro:symmetrization}
 The composition of the following maps
 \[
\symhs \otimes \symhs \hookrightarrow 
\ths \otimes \ths \xrightarrow{\mu}
\ths \xrightarrow{\s}
\symhs
 \]
 is the multiplication map of $\symhs$.
\end{pro}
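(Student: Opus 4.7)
The plan is to reduce to verifying the identity on homogeneous components and then invoke the classical (characteristic zero) identification of $\snhs$ with the space of homogeneous polynomials of degree $n$ on $H$. Since both the inherited multiplication on $\symhs$ and the composition $\s \circ \mu$ respect the bigrading --- sending $\snhs \otimes \sym^m H^\ast$ into $\sym^{n+m} H^\ast$ --- it suffices to fix $x \in \snhs$ and $y \in \sym^m H^\ast$ and check that the product $x \cdot y$, computed in $\symhs$ via the given isomorphism $\symhs \cong k\hs$, agrees with $\mathpzc{S}_{n+m}(x \otimes y)$.

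The key tool is the polarization isomorphism $\snhs \xrightarrow{\sim} \{\text{homogeneous polynomials of degree } n \text{ on } H\}$ sending $x$ to $p_x(v) := x(v, v, \dots, v)$, which is a linear isomorphism in characteristic zero. Under this identification, the ring structure transferred from $k\hs$ is by construction pointwise multiplication of polynomials: $p_{x \cdot y}(v) = p_x(v) \, p_y(v)$. Thus the claim is equivalent to showing that $\mathpzc{S}_{n+m}(x \otimes y)$ produces the same polynomial $v \mapsto p_x(v) p_y(v)$ after diagonal evaluation.

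This in turn follows from the elementary observation that diagonal evaluation is $S_{n+m}$-invariant: for any $z \in (H^\ast)^{\otimes (n+m)}$ and $\sigma \in S_{n+m}$, one has $(\sigma \cdot z)(v, \ldots, v) = z(v, \ldots, v)$. Averaging over $S_{n+m}$ then gives
\[
\mathpzc{S}_{n+m}(x \otimes y)(v, \ldots, v) = (x \otimes y)(v, \ldots, v) = p_x(v) \, p_y(v),
\]
and injectivity of the polarization map forces $\mathpzc{S}_{n+m}(x \otimes y) = x \cdot y$ in $\sym^{n+m} H^\ast$. Bilinearity and componentwise extension then upgrade this to the asserted identity on the full product $\symhs \otimes \symhs$.

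The only real subtlety lies in the polarization step, which relies on $k$ having characteristic zero so that the normalizing factor $1/|S_n|$ in the definition of $\sn$ is available. Beyond this, there is no serious obstacle: the proof is essentially a bookkeeping exercise that makes explicit the familiar link between polynomial multiplication and symmetrization of tensors, and it will serve as the commutative template to be generalized in subsequent sections when $\symhs$ is replaced by its $G$-braided analogue.
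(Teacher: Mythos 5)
Your proposal is correct, though strictly speaking there is nothing in the paper to compare it against: the paper states Proposition~\ref{pro:symmetrization} without proof, as an observation resting on the ``well-known'' identification $k\hs \cong \symhs$, and your polarization argument is precisely the standard justification being left implicit. The argument is sound at every step: the reduction to bihomogeneous components is legitimate because both the transferred product and $\s \circ \mu$ send $\snhs \otimes \sym^m H^\ast$ into $\sym^{n+m}H^\ast$; diagonal evaluation is manifestly $S_{n+m}$-invariant, so the normalized average $\mathpzc{S}_{n+m}(x \otimes y)$ has the same diagonal restriction $p_x p_y$ as $x \otimes y$; and injectivity of polarization in characteristic zero closes the loop. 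One point you should make explicit: identifying a formal homogeneous polynomial with the function $v \mapsto p_x(v)$, and hence the injectivity you invoke, uses that $k$ is infinite — guaranteed here since $k$ has characteristic zero, but it is a separate hypothesis from the availability of the factor $1/|S_n|$, which is the only ingredient you flag. A comparative remark worth recording: your technique is genuinely tied to the commutative setting, because diagonal evaluation has no analogue in $\dcg$-$\md$ — there is no $G$-equivariant ``diagonal'' compatible with the braiding — which is exactly why the paper's braided generalization (Proposition~\ref{pro:braidedalgebra}) proceeds evaluation-free, using only the projector property of the braidization (Remark~\ref{rem:identity}) and the functoriality and associativity axioms of Definition~\ref{defi:braidization}. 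So while your last sentence calls this proof ``the commutative template'' for what follows, the polarization step is the one part of the template that does \emph{not} survive the generalization; an evaluation-free variant of your argument (checking directly that the projector $\mathpzc{S}_{n+m}$ intertwines juxtaposition with the transferred product) would be the closer precursor to the paper's later proofs.
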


\begin{rem}
  In view of the identification of $k\hs$ with $\symhs$, 
  we regard $k\hs$ as the ring of
  symmetric multilinear forms.
\end{rem}

The following functorial property is also well known.

\begin{pro}
  Let $\phi : H \to K$ be a linear map of finite dimensional vector spaces.
  Then we have the induced map of algebra with unity
  \[
\phi^\ast : \symks \to \symhs \, .
  \]
  \label{pro:symfunctorial}
\end{pro}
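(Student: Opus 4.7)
The plan is to build $\phi^\ast$ by first dualizing $\phi$, then taking tensor powers, then restricting to symmetric tensors, and finally checking that this respects the multiplication described in Proposition~\ref{pro:symmetrization}.

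First I would note that $\phi : H \to K$ yields a dual map $\phi^\ast : K^\ast \to H^\ast$ in the opposite direction, and hence tensor power maps $(\phi^\ast)^{\otimes n} : (K^\ast)^{\otimes n} \to (H^\ast)^{\otimes n}$ for each $n \geq 0$, with the $n = 0$ case being the identity on $k$. Each $(\phi^\ast)^{\otimes n}$ is manifestly $S_n$-equivariant, because the symmetric group acts by permuting tensor factors and $(\phi^\ast)^{\otimes n}$ treats the factors uniformly. Therefore it restricts to a map on $S_n$-invariants $\sym^n K^\ast \to \sym^n H^\ast$, and taking the product over $n$ defines $\phi^\ast : \symks \to \symhs$.

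Next I would verify that this map is a ring homomorphism by invoking Proposition~\ref{pro:symmetrization}. The multiplication on $\symhs$ is the composite of the inclusion into $\ths$, juxtaposition $\mu$, and the symmetrization $\s$; similarly for $\symks$. The juxtaposition map in the tensor algebra is functorial in the underlying vector space, so $(\phi^\ast)^{\otimes (m+n)} \circ \mu = \mu \circ ((\phi^\ast)^{\otimes m} \otimes (\phi^\ast)^{\otimes n})$. Moreover, since $(\phi^\ast)^{\otimes n}$ is $S_n$-equivariant, it commutes with the averaging operator $\sn$. Chaining these two compatibilities gives exactly the commutativity of the diagram asserting that $\phi^\ast$ is multiplicative. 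Preservation of the unit is immediate since the restriction of $\phi^\ast$ to degree zero is the identity on $k$.

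The argument is essentially routine functoriality, so there is no genuine obstacle; the only thing to be careful about is keeping track that all maps preserve the grading and that the symmetrization factors $1/|S_n|$ cause no trouble (they do not, since $(\phi^\ast)^{\otimes n}$ is linear and commutes termwise with the $S_n$-action). If one wanted a slicker proof, one could alternatively note that $\sym(-)$ is a functor from finite dimensional vector spaces to graded commutative algebras via the universal property of the symmetric algebra, but Proposition~\ref{pro:symmetrization} gives the concrete computation that fits the framework being set up for the later braided generalization.
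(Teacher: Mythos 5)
Your proof is correct, and it is essentially the paper's approach: the paper itself states Proposition~\ref{pro:symfunctorial} without proof as ``well known,'' but your argument---$S_n$-equivariance of $(\phi^\ast)^{\otimes n}$, restriction to invariants, and compatibility with $\mu$ and the averaging operator $\sn$---is exactly the $G$-trivial specialization of the proof the paper gives for the braided generalization (Lemma~\ref{lem:commute} and Proposition~\ref{pro:braidedalgebra}, feeding into Theorem~\ref{thm:ringmap}). Nothing is missing; your care about the unit in degree zero and the harmlessness of the $1/|S_n|$ factors matches the paper's treatment.
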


\subsection{Braided Tensors}
\label{sec:braided}

In this section, we introduce the notion of braided tensors in 
certain subcategories of 
$\mathbf{Vect}_{k}$, that are also braided monoidal.
We also define ring structures for them.
Note that we work in the covariant setting. 
The dual setting will be considered when we specialize to the category of $G$-graded 
$G$-modules in Section~\ref{ssec:gbrsp}.

We work with a subcategory $\mathbf{C}$ of $\mathbf{Vect}_{k}$,
and fix a braided monoidal structure 
$(\mathbf{C}, \otimes, \mathbf{1} = k, \Phi)$. 
Here $\otimes$ is the usual tensor product of vector spaces,
but $\Phi$ may be different from the usual braided (actually, symmetric)
monoidal structure of $\mathbf{Vect}_k$.
We may simply use $\mathbf{C}$ understanding its braided monoidal structure.

We first generalize the notion of symmetric tensors on vector spaces.
For any $H \in \bc$, note that the braid group $B_n$ acts
on $H^{\otimes n}$ canonically.
Let $\bcf$ be the full subcategory of finite dimensional objects.

\begin{defi}
For each $H \in \bcf$, 
  set 
  \[
    \brnh := 
    (\hn)^{B_n} \, ,
  \]
  the subspace of $B_n$-invariant $n$-tensors.
  Its elements are called the \emph{braided $n$-tensors} on $H$.
Also, set 
\[
  \brh
  := \prod_{n=0}^{\infty} \brnh  \subset \Th \, .
\]
 Its elements are called the \emph{braided tensors} on $H$. 
 \label{defi:bten}
\end{defi}

To prove that braided tensors are functorial,
we need the following lemma.

\begin{lem}
Let $\phi : H \to K$ be a morphism in $\bcf$. 
Then for any $b \in B_n$, we have the following commutative diagram.
\[
  \begin{CD}
\hn @>\pn>> \kn \\
@VVbV       @VVbV \\
\hn @>\pn>> \kn
  \end{CD}
\]
  \label{lem:commute}  
\end{lem}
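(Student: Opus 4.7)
The plan is to reduce the claim to the generators of $B_n$ and then invoke the naturality of the braiding $\Phi$, which is built into the definition of a braided monoidal category. Since $B_n$ is generated by the elementary braids $\sigma_1, \dots, \sigma_{n-1}$, and since the set of $b \in B_n$ for which the square commutes is clearly closed under composition (composing two commuting squares vertically yields another commuting square), it suffices to verify the diagram when $b = \sigma_i$ for some $i$.

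For such a generator, the action of $\sigma_i$ on $H^{\otimes n}$ is the morphism
\[
  \mathrm{id}_H^{\otimes (i-1)} \otimes \Phi_{H,H} \otimes \mathrm{id}_H^{\otimes (n-i-1)},
\]
and analogously on $K^{\otimes n}$ with $\Phi_{K,K}$. Next I would use the bifunctoriality of $\otimes$ to rewrite $\phi^{\otimes n}$ in both orders as
\[
  \bigl(\mathrm{id}_K^{\otimes(i-1)} \otimes (\phi \otimes \phi) \otimes \mathrm{id}_K^{\otimes(n-i-1)}\bigr) \circ \bigl(\phi^{\otimes(i-1)} \otimes \mathrm{id}_H^{\otimes 2} \otimes \phi^{\otimes(n-i-1)}\bigr),
\]
and symmetrically on the other side, so that the only nontrivial piece to check is a two-fold tensor factor. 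The assertion then reduces to the square
\[
  \begin{CD}
    H \otimes H @>\phi \otimes \phi>> K \otimes K \\
    @VV\Phi_{H,H}V                     @VV\Phi_{K,K}V \\
    H \otimes H @>\phi \otimes \phi>> K \otimes K
  \end{CD}
\]
which is precisely the naturality of the braiding $\Phi$ in a braided monoidal category.

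Assembling these pieces, an induction on the word length of $b$ in the generators $\sigma_i^{\pm 1}$ finishes the proof. There is no real obstacle here beyond correctly bookkeeping the tensor factors; the essential input is just the naturality axiom for $\Phi$ together with the fact that the $B_n$-action on $n$-fold tensor products in any braided monoidal category is defined through these generators.
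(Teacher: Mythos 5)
Your proposal is correct and is exactly the paper's argument: the paper proves this lemma in one line by citing the functoriality (naturality) of the braiding, and your reduction to the generators $\sigma_i^{\pm 1}$ acting as $\mathrm{id}^{\otimes(i-1)} \otimes \Phi^{\pm 1} \otimes \mathrm{id}^{\otimes(n-i-1)}$, followed by the naturality square for $\Phi$, is simply that argument written out in full detail (note only that naturality of $\Phi^{-1}$, needed for the inverse generators, follows from that of $\Phi$ since the vertical maps are isomorphisms).
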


\begin{proof}
This follows from the functoriality of the braiding in $\bc$.  
\end{proof}

\begin{rem}
 We regard that $\pn = id_k$  when $n=0$.
\end{rem}

\begin{pro}
 $\br^n(\cdot)$ and $\br \llbracket \cdot \rrbracket$
 are functors $\bcf \to \vc$.
\end{pro}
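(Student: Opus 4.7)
The plan is to define each of the two functors on morphisms by restricting the $n$-fold tensor power, and then to verify the axioms componentwise. First I would define, for every morphism $\phi : H \to K$ in $\bcf$, the linear map $\br^n(\phi)$ as the restriction of $\pn : \hn \to \kn$ to the subspace $\brnh \subseteq \hn$. The content of Lemma~\ref{lem:commute} is exactly what is needed to see that this restriction takes values in $\brnk$: if $x \in \brnh$ and $b \in B_n$, then
\[
  b \cdot \pn(x) \;=\; \pn(b \cdot x) \;=\; \pn(x),
\]
so $\pn(x)$ is $B_n$-invariant. Linearity is inherited from $\pn$, so $\br^n(\phi) \in \hom_{\vc}(\brnh, \brnk)$ is well defined.

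Next I would verify the two functor axioms for $\br^n$. Since $(\mathrm{id}_H)^{\otimes n}$ equals the identity of $\hn$, its restriction to $\brnh$ yields $\br^n(\mathrm{id}_H) = \mathrm{id}_{\brnh}$. For composition, given $\phi : H \to K$ and $\psi : K \to L$ in $\bcf$, the standard identity $(\psi \circ \phi)^{\otimes n} = \psi^{\otimes n} \circ \phi^{\otimes n}$ for ordinary tensor powers restricts to $\br^n(\psi \circ \phi) = \br^n(\psi) \circ \br^n(\phi)$.

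For the global version, I would set $\br \llbracket \phi \rrbracket := \prod_{n=0}^{\infty} \br^n(\phi) : \brh \to \brk$, using the preceding remark that $\pn = \mathrm{id}_k$ in degree zero so the $n=0$ factor is the identity of $k$. Both axioms then follow componentwise from those already established for the $\br^n$.

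The main thing worth proving is really already in hand: Lemma~\ref{lem:commute} supplies the only nontrivial input, namely that tensor powers of a morphism intertwine the braid group action, and the remaining checks reduce formally to the functoriality of the ordinary tensor power on $\mathbf{Vect}_k$. I therefore do not expect a substantial obstacle.
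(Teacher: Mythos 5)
Your proposal is correct and follows essentially the same route as the paper: you restrict $\pn$ to $\brnh$, invoke Lemma~\ref{lem:commute} via the identical computation $b \cdot \pn(v) = \pn(b \cdot v) = \pn(v)$ to land in $\brnk$, and deduce the identity and composition axioms from the corresponding identities for ordinary tensor powers, with the total functor defined as $\prod_{n=0}^{\infty}$ of the graded pieces. No gaps; this matches the paper's argument in substance and structure.
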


\begin{proof}
Let $\phi : H \to K$ be a morphism in $\bcf$.
Consider the restriction of the morphism
$\pn : \hn \to \kn$ on $\brnh$.
Let $v \in \brnh$. Then by Lemma~\ref{lem:commute}, we have
\[
  b \cdot \pn(v) = \pn (b \cdot v) = \pn (v) \, .
\]
showing that $\pn (v) \in \brnk$.

Set $\phi_\ast := \prod_{n=0}^{\infty} \pn$.
Then $\phi_\ast$ maps $\brh$ to $\brk$.

If $\psi$ is another morphism in $\bcf$, the equations 
$(\psi \circ \phi)^{\otimes n} =  \psi^{\otimes n} \circ \phi^{\otimes n}$ and
$(\psi \circ \phi)_\ast =  \psi_\ast \circ \phi_\ast$
are inherited from the ones before restricting to braided tensors.

We also observe that $id^{\otimes n} = id$ and $id_\ast = id$.
\end{proof}

  Note that $\brnh$ and $\brh$ are vector spaces, but they need not be
  objects of $\mathbf{C}$.
  We want them to be objects in $\bc$.

\begin{defi}
  $\bc$ is \emph{regular}
  if, for each $H \in \bcf$, $\brnh$ and $\brh$ are objects of $\mathbf{C}$,
  and their braidings are natural in the following sense:
  given another object $W$ of $\mathbf{C}$,
  the braiding map
  \[
    \Phi_{\brnh, W}  : \brnh \otimes W \to W \otimes \brnh
  \]
  is given by the restriction of 
  the braiding map 
  \[
    \Phi_{\hn, W} : \hn \otimes W \to W \otimes \hn,
  \]
and the braiding map
  \[
    \Phi_{\brh, W}   : \brh \otimes W \to W \otimes \brh
  \]
  comes from $\Phi_{\brnh,W}$ applied degree by degree.
  Similarly for the braiding maps
  $\Phi_{W, \brnh}$ and $\Phi_{W, \brh}$.
  \end{defi}

Note that $\brh$ need not be a subalgebra of
$\Th$. Instead, we define a different multiplication on it. 
To this end, we generalize the notion of symmetrization in 
Definition~\ref{defi:symmetrization} in the covariant setting.

\begin{defi}
  A \emph{braidization} in $\bc$ is the following data.
  \begin{itemize}
    \item (\emph{$n$th braidization}) For each $H \in \bcf$ and each $n \geq 0$, 
  a surjective linear map 
  \[
    \mathpzc{B}_n : 
  H^{\otimes n} \to \mathpzc{Br}^n H
  \] 
  that satisfies
  \begin{enumerate}[(i)]
    \item $\mathpzc{B}_n ^2 = \mathpzc{B}_n \ ,$ 
    \item $\mathpzc{B}_n (v) = \mathpzc{B}_n (b \cdot v)$ for any 
      $v \in H^{\otimes n}$ and $b \in B_n \ .$ 
  \end{enumerate}
\end{itemize}
These maps should satisfy the following two conditions.
\begin{itemize}
\item (\emph{Functoriality}) For each morphism $\phi : H \to K$ in $\bcf$,
  the following diagram is commutative for each $n$.
  \[
    \begin{CD}
\hn @>\bn>> \brnh \\
@VV\pn V      @VV\pn V \\
\kn @>\bn>> \brnk
    \end{CD}
  \]

\item (\emph{Associativity}) For each $H \in \bcf$,  $v \in \hn$, $w \in \hm$ and
  $z \in \hk$,
\begin{align*}
    \B_{n+m+k}(\B_{n+m}(v \otimes w)\otimes z) 
    &= \B_{n+m+k}(v \otimes \B_{m+k}(w \otimes z)).
  \end{align*}
  \end{itemize}
  $\mathbf{C}$ is \emph{braidizable}
if $\bc$ admits a braidization.
After fixing a braidization, $\bc$ is \emph{braidized}.
If $\bc$ is braidized, for each $H \in \bcf$, set the \emph{total braidization}
  \[
    \mathpzc{B}:= \prod_{n=0}^{\infty} \mathpzc{B}_n : 
\Th \to \brh \, .
  \]
 \label{defi:braidization}
\end{defi}

\begin{rem}
  Note that the surjectivity and condition (i) of $\mathpzc{B}_n$
  implies that it is the identity on $\mathpzc{Br}^n H$.
  \label{rem:identity}
\end{rem}

Now we are ready to generalize Proposition~\ref{pro:symmetrization} 
in the covariant setting.

\begin{pro}
  Suppose that $\bc$ is regular and braidized.
    Then for each $H \in \bcf$, the composition of the following maps
  \begin{align*}
    \circ:
    \mathpzc{Br} \llbracket H \rrbracket \otimes \mathpzc{Br} \llbracket H \rrbracket
    \hookrightarrow \Th \otimes 
    \Th \xrightarrow{\mu} 
    \Th \xrightarrow{\mathpzc{B}}
    \mathpzc{Br} \llbracket H \rrbracket
  \end{align*}
  gives $\mathpzc{Br} \llbracket H \rrbracket$ the structure of a 
  braided-commutative algebra with unity.
  Moreover, for any
  morphism $\phi: H \to K$ in $\bcf$,
  the induced linear map
  \[
\phi_\ast : \brh \to \brk
  \]
  is a morphism of algebra with unity.
  \label{pro:braidedalgebra}
\end{pro}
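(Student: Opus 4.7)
The plan is to verify the ring axioms---unit, associativity, braided-commutativity, and the fact that $\phi_\ast$ preserves the algebra structure---one at a time, each being a direct translation of one of the clauses of Definition~\ref{defi:braidization}, with regularity entering only for braided-commutativity.

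For the unit, the candidate is $1 \in k = \br^0 H \subset \brh$: since juxtaposition $\mu$ in $\Th$ has $1$ as its unit and Remark~\ref{rem:identity} says $\B$ is the identity on braided tensors, $1 \circ x = \B(\mu(1 \otimes x)) = \B(x) = x$ for all $x \in \brh$, and the right identity is symmetric. For associativity, pick $x \in \brnh$, $y \in \brmh$, $z \in \br^k H$; unwinding $\circ$ gives
\[
(x \circ y) \circ z = \B_{n+m+k}\bigl(\B_{n+m}(x \otimes y) \otimes z\bigr) \quad \text{and} \quad x \circ (y \circ z) = \B_{n+m+k}\bigl(x \otimes \B_{m+k}(y \otimes z)\bigr),
\]
which coincide by the associativity clause of a braidization, and linearity covers the general case. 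The ring-map statement for $\phi_\ast$ drops out of the functoriality square in Definition~\ref{defi:braidization}: for $x \in \brnh$, $y \in \brmh$,
\[
\phi_\ast(x \circ y) = \phi_\ast \B_{n+m}(x \otimes y) = \B_{n+m}\bigl(\phi^{\otimes(n+m)}(x \otimes y)\bigr) = \phi_\ast(x) \circ \phi_\ast(y),
\]
while the unit is preserved because $\phi^{\otimes 0} = id_k$.

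The most delicate step is braided-commutativity. Given $x \in \brnh$, $y \in \brmh$, regularity forces the braiding $\Phi_{\brnh, \brmh}$ to be the restriction of $\Phi_{\hn, \hm}$; a standard coherence (hexagon) argument identifies $\Phi_{\hn, \hm}$ with the action on $H^{\otimes(n+m)}$ of the positive block braid $\beta_{n,m} \in B_{n+m}$ obtained by cabling $\Phi_{H,H}$. Condition (ii) of the braidization then yields
\[
\B_{n+m} \circ \mu \circ \Phi_{\brnh, \brmh}(x \otimes y) = \B_{n+m}\bigl(\beta_{n,m} \cdot (x \otimes y)\bigr) = \B_{n+m}(x \otimes y) = x \circ y,
\]
which is precisely the braided-commutativity of $\circ$. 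The main obstacle is this identification of the abstract categorical braiding with a concrete element of $B_{n+m}$; it is the one place where the coherence axioms of $\bc$ and the naturality clause of regularity must be used together. Once that identification is in hand, all four axioms become mechanical consequences of the braidization data.
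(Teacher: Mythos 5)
Your proposal is correct and takes essentially the same approach as the paper: unity via Remark~\ref{rem:identity}, associativity and the ring-map property for $\phi_\ast$ read off directly from the associativity and functoriality clauses of Definition~\ref{defi:braidization}, and braided-commutativity from regularity together with condition (ii). Your explicit cabling identification of $\Phi_{\hn, \hm}$ with a block braid in $B_{n+m}$ is precisely the step the paper leaves implicit when it invokes condition (ii), so you have merely spelled out, not altered, the paper's argument.
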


\begin{proof}
Braided-commutativity follows from the following equations.
Let $\sum_{m=0}^{\infty} v_m$, $\sum_{n=0}^{\infty} w_n \in \brh$ with
$v_m \in \mathpzc{Br}^m H$ and
$w_n \in \brnh$.
Using the regularity of $\bc$, we have
\begin{align*}
  \mathpzc{B}(\mu(&\sum_{m=0}^{\infty} v_m \otimes \sum_{n=0}^{\infty} w_n)) 
  = \mathpzc{B}(\sum_{m,n=0}^{\infty}v_m \otimes w_n) \\ 
  &= \mathpzc{B} (\sum_{m,n=0}^{\infty} \Phi(v_m \otimes w_n)) \quad 
  \text{by (ii) of Definition~\ref{defi:braidization}}\, , \\
  &= \mathpzc{B} (\mu \Phi(\sum_{m=0}^{\infty} v_m \otimes \sum_{n=0}^{\infty} w_n))\, . 
\end{align*}
Associativity of $\circ$ follows from that of the braidization.
Unity exists by Remark~\ref{rem:identity}.

We have the following commutative diagram by the functoriality
 of the braidization in $\bc$.
\[
  \begin{CD}
\brnh \otimes \brmh @>\mu>> H^{\otimes (n+m)} @>\B_{n+m}>> \mathpzc{Br}^{n+m}H \\
@VV\phi^{\otimes (n+m)}V     @VV\phi^{\otimes (n+m)}V       @VV\phi^{\otimes (n+m)}V \\
\brnk \otimes \brmk @>\mu>> K^{\otimes (n+m)} @>\B_{n+m}>> \mathpzc{Br}^{n+m}K 
  \end{CD}
\]
Recall that $\mu$ in the left box denotes the juxtaposition.
This diagram shows that $\phi_\ast$ preserves the multiplication.
Unity is preserved since $\phi^{\otimes n}  = id_k$ on $\brnh = \brnk = k$
when $n = 0$.
\end{proof}

\subsection{A Groupoid Structure on $G^n$}
\label{ssec:groupoidgn}

Before specializing to the case of $G$-graded $G$-modules,
we study the $B_n$-action on the $G^n$-degrees on $\hn$ 
for any $G$-graded $G$-module $H$.

$G^n \rtimes S_n$ acts on $G^n$ by 
taking the componentwise adjoint action of $G^n$ first, 
and then switching positions. 
This induces a natural $B_n$ action on $G^n$ in the following way.
Let $b_1$ be one of the standard generators of $B_n$ that braids
the first two strands. Then
\begin{align*}
  b_1 (\gamma_1, \dots, \gamma_n) := 
  (\gamma_1 \gamma_2 \gamma_1^{-1}, \gamma_1, \gamma_3, \dots, \gamma_n) \, ,
\end{align*}
and similarly for the other generators.
This shows that given any $b \in B_n$ and $\boldsymbol \gamma \in G^n$,
we have an element $b_{\boldsymbol \gamma} \in G^n \rtimes S_n$
that really acts on $\boldsymbol \gamma$.
For example, we see that $b_{1, \bg} = (e, \gamma_1, e, \dots, e) \times (1,2)$.
Note that this does not define a homomorphism from $B_n$ to $G^n \rtimes S_n$
since $b_{\bg}$ depends on $\bg$.
We have the following structure instead.

\begin{defi}
  The \emph{$B_n$-groupoid $\mathcal{G}^n$} is defined in the following way:
\begin{align*}
   \obj(\mathcal{G}^n) &= G^n \, , \\
   \hom (\boldsymbol \gamma_1, \boldsymbol \gamma_2) &= 
    \{b_{\boldsymbol \gamma_1} \in G^n \rtimes S_n | b \boldsymbol \gamma_1 =
    \boldsymbol \gamma_2 \text{ for some } b \in B_n \} \, .
  \end{align*}

  \begin{rem}
    A related structure is the \emph{action groupoid} $B_n \ltimes G^n$.
    See \cite{moer} for its definition.
    This is different from $\gn$ in that different elements of $B_n$
    can give rise to the same arrow in $\gn$.
    In particular, $\gn$ is a finite groupoid, whereas $B_n \ltimes G^n$ is not.
  \end{rem}

  \begin{rem}
 $\obj(\gn)$ has the natural $G$-grading that is defined as 
 $\prod \gamma_i$ for $\bg \in \obj(\gn)$.
Since it is invariant under the action of $B_n$, 
each component of $\mathcal{G}^n$ has the constant
    $G$-degree.
    \label{rem:degree}
  \end{rem}

  \begin{rem}
    Consider the diagonal adjoint $G$-action on $\obj(\gn)$.
We observe that this commutes with the action of $B_n$. 
This implies that $G$ acts on the set of components 
of $\gn$. If a component $C$ has the $G$-degree $\gamma$,
the $G$-degree of $g \cdot C$ is equal to $g \gamma g^{-1}$.
    \label{rem:diagonal}
  \end{rem}

\end{defi}

We want to introduce the concept of reflection map on $B_n$, that will be useful
when we consider braided tensors. 

\begin{defi}
  The \emph{reflection} $r$ on $B_n$ is an \emph{anti}homomorphism that sends
  $b_i$ to $b_{n-i}$.
  \label{defi:breflection}
\end{defi}

\begin{rem}
  This definition makes sense since the defining relations on the generators of
  $B_n$ are also satisfied among the images of them.
  $r$ being an antihomomorphism does not matter because of the form of the relations.
\end{rem}

We want to see how this map works in the categories $\gn$.
Let $\bg :=(\gamma_1, \dots, \gamma_n) \in \gn$.
Let 
\[
  r: G^n \to G^n \, , \quad (\gamma_1, \dots, \gamma_n) 
  \mapsto (\gamma_n^{-1} , \dots, \gamma_1^{-1})
\]
be a set map.
Then we have the following commutative diagram
\[
  \begin{CD}
(\gamma_1, \dots, \gamma_n) @>r>> (\gamma_n^{-1}, \dots, \gamma_1^{-1}) \\
@VVb_1V                              @AAb_{n-1}A \\
(\gamma_1 \gamma_2 \gamma_1^{-1}, \gamma_1, \gamma_3, \dots, \gamma_n) @>r>>
(\gamma_n^{-1}, \dots, \gamma_3^{-1}, \gamma_1^{-1}, \gamma_1 \gamma_2^{-1} \gamma_1^{-1})
  \end{CD}
\]
and similarly for the other generators of $B_n$.
This allows us to make the following definition.

\begin{defi}
  The \emph{reflection map} on $\gn$ is the functor $r: (\gn)^\circ \to \gn$ that
  maps $\bg$ to $r(\bg)$ in the sense above, and sends $b_{i,\bg}$ to 
  $b_{n-i, r(b_i \cdot \bg)}$.
  \label{defi:reflection}
\end{defi}

\begin{rem}
  Given a morphism in $\gn$, 
  note that we should choose an element in $B_n$ that acts as the given one 
  to know the image of it under $r$. 
  One sees that this does not depend on the choice of the representative by looking
  at the diagram above.
  Namely, suppose we return to the original $\bg$ after several application
  of $b_i$'s and its inverses on the left side of the above diagram.
  Then the same thing should happen to the right side since the right
  side is the exact ``reflection'' of the left side.

  This observation also implies the following proposition.
\end{rem}

\begin{pro}
  $r$ is fully faithful.
  \label{pro:fullyfaithful}
\end{pro}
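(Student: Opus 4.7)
The plan is to prove the stronger statement that $r$ is an isomorphism of categories $(\gn)^\circ \to \gn$, which immediately yields fully faithfulness. Viewing $r$ equivalently as a contravariant endofunctor of $\gn$, the claim is that $r$ is an involution: $r \circ r = id_{\gn}$. Once this is established, $r$ has itself as a two-sided inverse, so the induced map on every hom-set is a bijection.

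On objects, $r^2(\bg) = \bg$ follows directly from $(\gamma^{-1})^{-1} = \gamma$. On generating morphisms, by definition $r(b_{i,\bg}) = b_{n-i,\, r(b_i \cdot \bg)}$, so
\[
r^2(b_{i,\bg}) = b_{n-(n-i),\, r(b_{n-i} \cdot r(b_i \cdot \bg))}.
\]
The commutative diagram displayed in Definition~\ref{defi:reflection} gives $b_{n-i} \cdot r(b_i \cdot \bg) = r(\bg)$, which in turn reflects back to $\bg$. Hence $r^2(b_{i,\bg}) = b_{i,\bg}$, and functoriality extends this to arbitrary compositions of generators.

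The main step that warrants careful checking is the well-definedness of $r$ on morphisms (rather than merely on the generating symbols $b_{i,\bg}$): a single arrow of $\gn$ is typically represented by many distinct elements of $B_n$, and all such representatives must have the same image under $r$. This is precisely the content of the remark preceding the proposition, and it uses that the antihomomorphism $b_i \mapsto b_{n-i}$ of Definition~\ref{defi:breflection} carries any word in $B_n$ whose action stabilizes $\bg$ to a word whose action stabilizes $r(\bg)$, because the right-hand column of the diagram above Definition~\ref{defi:reflection} is the exact reflection of the left-hand column. With well-definedness in hand, the involutivity $r^2 = id$ immediately delivers fully faithfulness.
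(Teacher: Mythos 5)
Your proof is correct and is in essence the same argument the paper intends: the paper's entire proof is the remark that the right-hand column of the displayed diagram is the exact reflection of the left-hand column (which gives both well-definedness on morphisms and the ability to reflect any word back), and your computation $r^2(b_{i,\bg}) = b_{i,\bg}$ via the identity $b_{n-i} \cdot r(b_i \cdot \bg) = r(\bg)$ is just an explicit packaging of that symmetry. Recording it as the stronger statement that $r$ is an involutive isomorphism of categories is a clean way to make precise what the paper leaves implicit.
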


\subsection{$G$-braided Spaces}
\label{ssec:gbrsp}

In this section, we consider the category of $G$-graded
$G$-modules, and interpret its finite dimensional objects as
 braided-commutative spaces.

\begin{defi}
  A \emph{$G$-graded $G$-module} $(H,\rho)$ is a vector space $H$
  with the decomposition 
  \begin{equation*}
    H=\bigoplus_{m \in G}H_m \ 
    \label{eq:grading}
  \end{equation*}
and a $G$-action $\rho$ respecting the $G$-grading, in the sense that
\begin{equation*}
  \rho(\gamma)H_m = H_{\gamma m\gamma^{-1}}\ 
  \label{eq:action}
\end{equation*}
for any $\gamma, m \in G$.
\label{defi:ggradedgmodule}
\end{defi}

\begin{rem}
  We may suppress $\rho$ for the $G$-action and use $\cdot$ instead,
  when no confusion should arise.
\end{rem}

\begin{rem}
  $G$-graded $G$-modules form the category
  $\dcg$-$\md$,
  the category of $D(k[G])$-modules \cite{dt, majid}. The latter is the Drinfel'd double
  of the group algebra $k[G]$. It is a quasitriangular Hopf algebra, 
  hence $\dcg$-$\md$ is a braided monoidal category. The braiding 
  is given by 
  \[
  \Phi : V_g \otimes W_h \to W_{ghg^{-1}} \otimes V_{g} \, , \quad
  w \otimes v \mapsto g \cdot v \otimes w \, 
\]
  and extending linearly.
  Their morphisms are $G$-equivariant linear maps that preserve the $G$-gradings.
  See \cite{majid} for details.
  \label{rem:category}
\end{rem}

\begin{rem}
  A (linear) representation of a groupoid is a functor 
  from the groupoid to the category of $k$-vector spaces.
  Given a $G$-graded $G$-module $H$, $H^{\otimes n}$ gives rise to
  a $\mathcal{G}^n$-representation.
  For $\boldsymbol \gamma :=(\gamma_1, \dots, \gamma_n)$,
  let $H_{\boldsymbol \gamma} := H_{\gamma_1} \otimes \dots \otimes H_{\gamma_n}$.
  Then we have $H^{\otimes n} = \bigoplus H_{\boldsymbol \gamma}$.
  The representation assigns $\boldsymbol \gamma$ to $H_{\boldsymbol \gamma}$
  and $b_{\boldsymbol \gamma}$ to $b:H_{\boldsymbol \gamma} \to H_{b \boldsymbol \gamma}$.
  This is well defined since if $b_{\boldsymbol \gamma} = b'_{\boldsymbol \gamma}$,
  the maps $b$ and $b' : H_{\boldsymbol \gamma} \to H_{b \boldsymbol \gamma} = 
  H_{b' \boldsymbol \gamma}$ are the same.
    \label{rem:representation}
\end{rem}

\begin{rem}
When $G$ is trivial, $\dcg$-$\md$ is nothing but the symmetric monoidal 
category of vector spaces.
Hence its braided multilinear forms on finite dimensional objects
are the same as the symmetric multilinear forms.
\label{rem:trivial}
\end{rem}

  Note that $\brnh$ is graded by components of $\gn$.
That is, let $C$ be a component of $\gn$.
Then $H_C := \bigoplus_{\bg \in C} H_{\bg}$ is an invariant subspace of the map
\[
b : \hn \to \hn \, ,
\]
for any $b \in B_n$.
Let $C(\gn)$ denote the set of components in $\gn$ and
define $\brnch := \brnh \cap H_C$.
Then we have
\[
  \brnh = \bigoplus_{C \in C(\gn)} \brnch \, ,
\]
where $B_n$ acts on $\brnch$ trivially.
This follows from the equation $b \cdot v = v$ for any $v \in \brnh$ and $b \in B_n$.
We give a characterization of vectors in $\brnch$. 

\begin{pro}
  Let $C \in C(\gn)$ and $v \in H_C$. Write 
\[ 
  v = \sum_{\bg \in C} v_{\bg} \, , 
\]
where $v_{\bg}$ denotes the $G^n$-homogeneous component of $v$.
Then $v \in \brnch$ if and only if 
\[
  b \cdot v_{\bg} = v_{b \cdot \bg}
\]
for any $b \in B_n$.
\label{pro:brnchs}
\end{pro}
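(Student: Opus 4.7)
The plan is to reduce the statement to a direct comparison of $G^n$-homogeneous components after applying an element $b \in B_n$. By definition $\mathpzc{Br}^n_C H = \mathpzc{Br}^n H \cap H_C$, so $v \in H_C$ lies in $\mathpzc{Br}^n_C H$ if and only if $b \cdot v = v$ for every $b \in B_n$. The key input is Remark~\ref{rem:representation}: the action of $b \in B_n$ on $\hn$ restricts to a linear isomorphism $b: H_{\bg} \to H_{b \cdot \bg}$, where $b \cdot \bg$ is the action of $B_n$ on $\obj(\gn) = G^n$ introduced in Section~\ref{ssec:groupoidgn}.

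First I would unpack the forward direction. Assume $v \in \mathpzc{Br}^n_C H$ and fix $b \in B_n$. Writing $v = \sum_{\bg \in C} v_{\bg}$ with $v_{\bg} \in H_{\bg}$, one has
\[
  b \cdot v \;=\; \sum_{\bg \in C} b \cdot v_{\bg},
\]
and since $b \cdot v_{\bg} \in H_{b \cdot \bg}$, this is already the decomposition of $b \cdot v$ into $G^n$-homogeneous pieces (note $b \cdot \bg \in C$ because $C$ is a $B_n$-orbit). Comparing the $(b \cdot \bg)$-component of $v = b \cdot v$ then yields the identity $v_{b \cdot \bg} = b \cdot v_{\bg}$.

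For the converse, suppose the componentwise relation $b \cdot v_{\bg} = v_{b \cdot \bg}$ holds for every $b \in B_n$ and every $\bg \in C$. Then
\[
  b \cdot v \;=\; \sum_{\bg \in C} b \cdot v_{\bg} \;=\; \sum_{\bg \in C} v_{b \cdot \bg} \;=\; \sum_{\bg' \in C} v_{\bg'} \;=\; v,
\]
where the penultimate equality uses that $\bg \mapsto b \cdot \bg$ is a bijection of $C$. Hence $v \in (H_C)^{B_n} = \mathpzc{Br}^n_C H$.

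There is no real obstacle here beyond bookkeeping: the argument is essentially the observation that the $B_n$-action on $\hn$ is ``block-diagonal'' with respect to the $G^n$-grading on $H$, so $B_n$-invariance of a vector is equivalent to a compatible system of linear identifications between its $G^n$-homogeneous pieces. The only subtlety to flag explicitly is that $b \cdot \bg$ indeed stays in $C$ (used in both directions), which is precisely the definition of a component of $\gn$.
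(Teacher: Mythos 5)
Your proof is correct and follows essentially the same route as the paper's: compare $G^n$-homogeneous components of $b \cdot v = v$ for the forward direction, and reindex the sum via the bijection $\bg \mapsto b \cdot \bg$ on $C$ for the converse. Your explicit remarks that $b \cdot v_{\bg} \in H_{b \cdot \bg}$ and that $b$ preserves the component $C$ are just the bookkeeping the paper leaves implicit.
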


\begin{proof}
 Suppose $v \in \brnch$.
 Then the equation $b \cdot v = v$ implies
 \[
   \sum_{\bg \in C} b \cdot v_{\bg} = \sum_{\bg \in C} v_{\bg} \, .
 \]
 Comparing the homogeneous components yields the desired equation.
 Conversely, suppose $b\cdot v_{\bg}= v_{b \cdot \bg}$ for $b \in B_n$.
 We compute
 \[
   b \cdot v = \sum_{\bg \in C} b \cdot v_{\bg} 
   = \sum_{\bg \in C} v_{b \cdot \bg} = \sum_{\bg \in C} v_{\bg} = v \, ,
 \]
since the map $b: C \to C$, defined by acting on elements in $C$,
is bijective.
\end{proof}

Now we are ready to prove that $\dcg$-$\md$ is regular and braidizable.

\begin{pro}
  $\dcg$-$\md$ is regular.
  \label{pro:regular}
\end{pro}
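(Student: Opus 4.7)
The plan is to verify the two ingredients of regularity: that for each $H \in \bcf$ both $\brnh$ and $\brh$ carry the structure of a $G$-graded $G$-module, and that the categorical braiding with any $W \in \dcg$-$\md$ restricts to these subspaces exactly as specified in the definition. The one genuine input I will isolate first is that the diagonal $G$-action on $\hn$ commutes with the $B_n$-action: each standard generator $b_i$ of $B_n$ acts by the categorical braiding $\Phi$ inserted in positions $(i,i+1)$, and $\Phi$ is a morphism in $\dcg$-$\md$, hence $G$-equivariant with respect to diagonal actions. Thus $b \cdot (g \cdot v) = g \cdot (b \cdot v)$ for all $b \in B_n$, $g \in G$, and $v \in \hn$.

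Next I will define the $G$-grading on $\brnh$ using the decomposition $\brnh = \bigoplus_{C \in C(\gn)} \brnch$ recalled just before the statement, together with Remark~\ref{rem:degree}, which says that each component $C$ has a constant $G$-degree $\gamma_C$; collecting by degree yields $(\brnh)_\gamma := \bigoplus_{\gamma_C = \gamma} \brnch$. I will then define the $G$-action on $\brnh$ as the restriction of the diagonal action on $\hn$; the commutativity just observed shows that $g \cdot v \in \brnh$ whenever $v \in \brnh$. Compatibility of this action with the grading in the sense of Definition~\ref{defi:ggradedgmodule} is inherited from $\hn$; concretely, Remark~\ref{rem:diagonal} identifies $g \cdot \brnch$ with $\brngch$, whose $G$-degree is $g \gamma_C g^{-1}$, exactly as required.

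The space $\brh = \prod_{n \geq 0} \brnh$ then becomes a $G$-graded $G$-module by assembling degree by degree (using finiteness of $G$ to commute the direct sum with the product). For naturality of the braiding, I will use the explicit formula from Remark~\ref{rem:category}: for $v \in \brnh$ homogeneous of degree $\gamma$ and $w \in W$, $\Phi_{\hn, W}(v \otimes w) = (\gamma \cdot w) \otimes v$, which visibly lies in $W \otimes \brnh$ because $v$ is unchanged by the formula. Extending linearly over all $G$-degrees, $\Phi_{\hn,W}$ restricts to a well-defined map $\brnh \otimes W \to W \otimes \brnh$, and since the braiding in $\dcg$-$\md$ is determined entirely by the $G$-grading and $G$-action, this restriction coincides with the categorical braiding on $\brnh \otimes W$ under the inherited structure. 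The mirror computation handles $\Phi_{W, \brnh}$, and the corresponding statements for $\brh$ follow degree by degree. The only genuine obstacle is thus the commutativity of the $B_n$- and $G$-actions; everything else reduces to bookkeeping around Proposition~\ref{pro:brnchs} and the definitions.
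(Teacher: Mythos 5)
Your proposal is correct and follows essentially the same route as the paper: grade $\brnh$ by the components of $\gn$ using Remark~\ref{rem:degree}, assemble $\brh$ degree by degree using the finiteness of $G$, and inherit the braiding from $\hn$. The only (minor, and welcome) difference is that you obtain $G$-invariance of $\brnh$ directly from the commutativity of the $B_n$- and diagonal $G$-actions --- which you actually justify, via the braiding being a morphism in $\dcg$-$\md$, where the paper merely asserts it --- whereas the paper verifies invariance componentwise through Proposition~\ref{pro:brnchs}.
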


\begin{proof}
 Let $H$ be a finite dimensional $G$-graded $G$-module. 
 To see the $G$-graded $G$-module structure of $\brnh$, 
note that $\brnch$ has the natural $G$-degree equal to that of $C$
as defined in Remark~\ref{rem:degree}.
This defines a $G$-grading on $\brnh$.

To prove the $G$-module structure, consider the  diagonal $G$-action
on $\hn$. We want to show that $\brnh$ is invariant under this action.
Suppose that 
\[
v = \sum_{\bg \in C} v_{\bg} \in \brnch
\]
for some $C \in C(\gn)$.
Then we have
\[
  w := g \cdot v = \sum_{\bg \in C} g \cdot v_{\bg} \, ,
\]
and set $w_{g \cdot \bg} :=g \cdot v_{\bg}$, so that
\[
  w = \sum_{\bg \in g \cdot C} w_{\bg} \, .
\]
We use Proposition~\ref{pro:brnchs} to show that $w \in \brngch$.
Recall from Remark~\ref{rem:diagonal} that the $B_n$ and $G$-actions on
$\obj(\gn)$ commutes with each other. 
Moreover, we observe that the two actions on $\hn$ also commute with each other.
Hence for any $b \in B_n$,
\begin{align*}
   b \cdot w_{\bg} = b &\cdot (g \cdot v_{g^{-1} \cdot \bg}) 
   = g \cdot (b \cdot v_{g^{-1} \cdot \bg}) \\
   &= g \cdot v_{b \cdot (g^{-1} \cdot \bg)} 
   = g \cdot v_{g^{-1} \cdot(b \cdot \bg)} = w_{b \cdot \bg} \, .
\end{align*}
The compatibility of its $G$-grading and $G$-action follows by observing that
\[
\deg_G (g \cdot C) = g \cdot \deg_G C \cdot g^{-1} 
\]
where $\deg_G$ denotes the $G$-degree.

Next, we prove that $\brh$ is a $G$-graded $G$-module.
Using the $G$-grading on $\brnh$, write
\[
  \brnh = \bigoplus_{g \in G} (\brnh)_g
\]
where the subscript denotes the $G$-degree.
Using the finiteness of $G$, we have
\begin{align*}
  \brh &= \prod_{n=0}^{\infty}\brnh 
  = \prod_{n=0}^{\infty} \bigoplus_{g \in G} (\brnh)_g \\
  &= \bigoplus_{g \in G} \prod_{n=0}^{\infty} (\brnh)_g
  = \bigoplus_{g \in G} (\brh)_g \, ,
\end{align*}
by setting that $(\brh)_g := \prod_{n=0}^{\infty} (\brnh)_g$.
$G$-action on $\brh$ is defined term by term diagonally. 
The  $G$-invariance of $\brnh$ guarantees that of $\brh$.
The compatibility of its $G$-grading and $G$-action also follows from that of $\brnh$.

The naturality of their braidings follows from the fact that the $G$-grading 
on $\brnh$ is inherited from $\hn$.
\end{proof}

\begin{pro}
  $\dcg$-$\md$ is braidizable.
  \label{pro:braidization}
\end{pro}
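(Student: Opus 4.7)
The plan is to exhibit an explicit braidization on $\dcg$-$\md$ by averaging over the finite groupoid $\gn$. Using the decomposition $\hn = \bigoplus_{C \in C(\gn)} H_C$ and the corresponding $\brnh = \bigoplus_C \brnch$, it suffices to construct $\B_n$ component-wise. Fix any basepoint $\bg_0 \in C$ and let $A := \hom_{\gn}(\bg_0, \bg_0)$, a finite subgroup of $G^n \rtimes S_n$. For $v = \sum_{\bg \in C} v_{\bg} \in H_C$, define
\[
\B_n(v)_{\bg'} := \frac{1}{|C|\,|A|} \sum_{\bg \in C} \sum_{\alpha \in \hom_{\gn}(\bg,\bg')} \alpha \cdot v_{\bg},
\]
which is the standard projection formula onto invariants of a finite connected groupoid acting on a graded vector space. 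Extending by the direct sum over components of $\gn$ gives the candidate $n$th braidization $\B_n : \hn \to \brnh$.

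Verifying the axioms of Definition~\ref{defi:braidization} will proceed as follows. For condition (ii), postcomposition with any $b \in B_n$ induces a bijection $\hom_{\gn}(\bg,\bg') \to \hom_{\gn}(\bg, b \cdot \bg')$, whence $b \cdot \B_n(v)_{\bg'} = \B_n(v)_{b \cdot \bg'}$, so $\B_n(v) \in \brnch$ by the criterion of Proposition~\ref{pro:brnchs}. For idempotence, if $v \in \brnch$, then for any choice of arrow $\alpha_{\bg} : \bg_0 \to \bg$ we have $v_{\bg} = \alpha_{\bg} \cdot v_{\bg_0}$ and $v_{\bg_0}$ is $A$-invariant; a direct rewrite collapses the double sum to $|C|\,|A| \cdot v_{\bg'}$, giving $\B_n(v) = v$. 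Surjectivity follows from idempotence. Functoriality is immediate: any morphism $\phi : H \to K$ in $\dcg$-$\md$ preserves $G^n$-degrees and, by Lemma~\ref{lem:commute}, commutes with every $b \in B_n$, hence with every arrow of $\gn$, so $\phi^{\otimes n}$ intertwines the averaging formula term by term.

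The main obstacle is associativity. My plan is to exploit the strand-concatenation inclusion $B_n \times B_m \hookrightarrow B_{n+m}$, which identifies $\gn \times \g^m$ with a subgroupoid of $\g^{n+m}$ under the identification $G^n \times G^m = G^{n+m}$. The key intermediate step is the absorption identity $\B_{n+m} \circ (\B_n \otimes \mathrm{id}) = \B_{n+m}$ on $\hn \otimes \hm$, and its mirror for $\mathrm{id} \otimes \B_m$: the groupoid-averaging sum for $\g^{n+m}$ on a component $D$ factors, after choosing a transversal, into a partial average over the $\gn \times \g^m$-action followed by a sum over coset-like representatives of $D$ modulo the subgroupoid. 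Once absorption is established, both sides of the associativity equation reduce to $\B_{n+m+k}(v \otimes w \otimes z)$ by a direct rewrite. The delicate point will be bookkeeping the normalization constants $1/(|C|\,|A|)$ across the iteration, which I expect to follow from a counting identity comparing orders of automorphism groups and component sizes in the subgroupoid and in the ambient $\g^{n+m+k}$.
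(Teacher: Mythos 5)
Your proposal is correct and is essentially the paper's own proof: the paper defines the identical groupoid-averaging operator (written source-based, $\B_n(v)=\frac{1}{|A_{\bg}|}\sum_{b_{\bg}\in A_{\bg}}b_{\bg}\cdot v$ with $|A_{\bg}|=|C|\cdot|A|$, which agrees with your target-indexed double sum), shows the image lies in $\brnch$ via the same postcomposition bijection, collapses the sum on invariants for surjectivity and idempotence by the same counting, and proves functoriality via Lemma~\ref{lem:commute} exactly as you do. Two minor points: what you label condition (ii) is really the check that $\B_n$ lands in $\brnh$ (the actual axiom $\B_n(b\cdot v)=\B_n(v)$ follows from the mirror precomposition bijection, which you use elsewhere), and your transversal/coset bookkeeping for absorption is avoidable --- since $\B_n(v)\otimes w$ is by construction an average of translates of $v\otimes w$ under the strand inclusion $B_n\hookrightarrow B_{n+m}$, absorption and hence associativity follow at once from linearity and axiom (ii), which is in effect the content of the paper's terse remark that the $\g^{n+m+k}$-orbits coincide ``with correct averaging constants.''
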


\begin{proof}
  (\emph{$n$th braidization}) Let $H$ be a finite dimensional $G$-graded $G$-module.
  We use the $\mathcal{G}^n$-representation structure of Remark~\ref{rem:representation}
  on $\hn$. For $\boldsymbol \gamma \in \obj (\mathcal{G}^n)$, let
  $A_{\boldsymbol \gamma}$ 
  be the set of all the morphisms in the component of $\boldsymbol \gamma$ 
  with the source $\boldsymbol \gamma$. That is,

  \begin{align*}
    A_{\boldsymbol \gamma}:=\coprod_{\substack{\boldsymbol \gamma' = b \boldsymbol \gamma \\
    \text{for some}\ b \in B_n}} \hom_{\mathcal{G}^n}
    (\boldsymbol \gamma , \boldsymbol \gamma') \, .
  \end{align*}

  For $v \in H_{\bs \gamma}$, define

  \begin{align*}
    \mathpzc{B}_n(v) 
    := \frac{1}{|A_{\boldsymbol \gamma}|} 
    \sum_{b_{\boldsymbol \gamma} \in A_{\boldsymbol \gamma}} b_{\boldsymbol \gamma}
    \cdot v \, ,
  \end{align*}
  and extend linearly.
  
  We show that
\[
  b_i \cdot \bn (v) = \bn (v)
\]
for any generator $b_i$ of $B_n$. It is still enough to assume that $v \in H_{\bg}$.
Write
\[
  b_i \cdot \bn (v) = \frac{1}{|A_{\bg}|} \sum_{b_{\bg} \in A_{\bg}}
  b_i \cdot (b_{\bg} \cdot v)
\]
and observe that the map
\[
  b_i : A_{\bg} \to A_{\bg}
\]
defined by postcomposing the element $b_{i, \bg'} \in G^n \rtimes S_n$ 
is a bijection.
Here $\bg'$ denotes the target of each arrows in $A_{\bg}$.
Hence 
\[
  b_i \cdot \bn (v) = 
  \frac{1}{|A_{\bg}|} \sum_{b'_{\bg} \in A_{\bg}}
  b'_{\bg} \cdot v = \bn (v) \, ,
\]
where $b'_{\bg}$ denotes each element in the image of the map $b_i$ above.

To show the surjectivity and condition (i) of Definition~\ref{defi:braidization},
it is enough to show that 
\[
\mathpzc{B}_n (v) = v
\]
for any $v \in \brnh$.
Considering the $C(\gn)$-grading of $\brnh$,
it is enough to consider that $v \in \brnch$ for $C \in C(\gn)$ and put
\[
  v = \sum_{\bg \in C} v_{\bg} \, .
\]

Observe that $|A_{\bg}|$ is constant for $\bg \in C$. Write
\[
  n_C := |A_{\bg}|
\]
for any choice of $\bg \in C$. Also, $|\hom_{\gn}(\bg , \bg')|$ is
constant for any two objects $\bg$ and $\bg'$ in $C$.
Define
\[
m_C := |\hom_{\gn}(\bg , \bg')|
\]
for any choice of two objects $\bg$ and $\bg'$ in $C$. Then we have
\begin{align}
  n_C = |C| \cdot m_C \, .
  \label{eq:nummorphism}
\end{align}
By the definition of $\bn$, we have
\[
  \bn (v) = \frac{1}{n_C} \sum_{\substack{\bg \in C \\
    b_{\bg} \in A_{\bg}}} b_{\bg} \cdot v_{\bg} \, .
\]
Proposition~\ref{pro:brnchs} implies
\[
  \sum_{b_{\bg} \in A_{\bg}} b_{\bg} \cdot v_{\bg} = m_C \cdot v 
\]
for any fixed $\bg \in C$. 
Thus
\begin{align*}
   \bn (v) = \frac{1}{n_C} \sum_{\bg \in C} m_C \cdot v 
  = \frac{1}{n_C} \cdot |C| \cdot m_C \cdot v 
  = v \, ,
\end{align*}
by Equation~\eqref{eq:nummorphism}.

To prove condition (ii), it is enough to consider $v \in H_{\bg}$. 
Then $b \cdot v \in H_{b \cdot \bg}$. Hence
\[
  \bn (b \cdot v) = \frac{1}{|A_{b \cdot \bg}|}
  \sum_{b_{b\cdot \bg} \in A_{b \cdot \bg}} 
  b_{b \cdot \bg} \cdot (b \cdot v) \, .
\]
Since the map
\[
  b : A_{b \cdot \bg} \to A_{\bg}
\]
defined by precomposing $b_{\bg}$ is a bijection, and using the fact that
$|A_{b \cdot \bg}| = |A_{\bg}|$, 
we have
\[
  \bn(b \cdot v) = \frac{1}{|A_{\bg}|} \sum_{b'_{\bg} \in A_{\bg}}
  b'_{\bg} \cdot v \, ,
\]
where $b'_{\bg}$ denotes each element in the image of the map $b$ above.
  
(\emph{Functoriality}) It is enough to consider $G^n$-homogeneous vectors.
  Hence, suppose $v \in H_{\bg}$. Then by the
  definition of $n$th braidization, we have
  \[
    \pn(\bn(v)) = \frac{1}{|A_{\bg}|} 
    \sum_{b_{\bg} \in A_{\bg}} \pn(b_{\bg} \cdot v) \, .
  \]
  For each $b_{\bg} \in A_{\bg}$, choose $b \in B_n$ such that
  $b$ acts as $b_{\bg}$ on $H_{\bg}$. Let $A'$ be the finite subset of $B_n$
  of such $b$'s.
  Then we have 
  \begin{align*}
    \pn (\bn (v)) = \frac{1}{|A_{\bg}|} \sum_{b \in A'}
  \pn(b \cdot v) 
  = \frac{1}{|A_{\bg}|} \sum_{b \in A'} b \cdot \pn(v)
  \end{align*}
  by Lemma~\ref{lem:commute}.
  Since $\pn$ preserves the $G^n$-grading, we have
  \[
    \pn (\bn (v)) = \frac{1}{|A_{\bg}|} \sum_{b_{\bg} \in A_{\bg}}
    b_{\bg} \cdot \pn (v) = \bn(\pn (v)) \, .
  \]

(\emph{Associativity}) Suppose $v$, $w$ and $z$ in $\Th$
  are homogeneous of degree $n$, $m$ and $k$, respectively.
  By linearity, it is enough to assume that they are also monomials of
  homogeneous $G^n$, $G^m$ and $G^k$-degrees, respectively.
  We observe that
  \begin{align*}
    \B_{n+m+k}(\B_{n+m}(v \otimes w)\otimes z) 
    &= \B_{n+m+k}(v \otimes w \otimes z) \\
    &= \B_{n+m+k}(v \otimes \B_{m+k}(w \otimes z))
  \end{align*}
  since the $\g^{n+m+k}$-orbits of monomials of
  $\B_{n+m}(v \otimes w)\otimes z$
  and
$v \otimes \B_{m+k}(w \otimes z)$ coincide with that of
$v \otimes w \otimes z$, with correct averaging constants.
\end{proof}

\begin{rem}
  We assume that $\dcg$-$\md$ is braidized in this way hereafter.
\end{rem}

\begin{rem}
  Note that we recover the usual symmetrization when $G$ is trivial.
\end{rem}

The following proposition enables us to use the results of the previous
section in the dual setting.

\begin{pro}
  In $\dcg$-$\md$, taking the dual space is a contravariant functor on its
  full subcategory of finite dimensional $G$-graded $G$-modules.
    \label{pro:dual}
\end{pro}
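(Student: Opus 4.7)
The plan is to exhibit the natural $G$-graded $G$-module structure on the dual and then verify that pullback of morphisms is contravariantly functorial. First, I would define the grading by $(H^\ast)_m := (H_m)^\ast$, realized concretely as the subspace of $H^\ast$ consisting of linear functionals that vanish on $H_n$ for every $n \neq m$. The hypothesis of finite-dimensionality enters here in an essential way: it guarantees the direct sum decomposition
\[
H^\ast = \bigoplus_{m \in G} (H^\ast)_m \, ,
\]
since otherwise the linear dual of an infinite direct sum would be a direct product rather than a direct sum. Next, I would define the $G$-action by the standard contragredient formula $(\rho^\ast(\gamma) f)(v) := f(\rho(\gamma^{-1}) v)$.

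Checking that this is a $G$-graded $G$-module in the sense of Definition~\ref{defi:ggradedgmodule} reduces to one short calculation. If $f \in (H^\ast)_m$ and $v \in H_n$, then $\rho(\gamma^{-1}) v \in H_{\gamma^{-1} n \gamma}$, so $(\rho^\ast(\gamma) f)(v)$ vanishes unless $\gamma^{-1} n \gamma = m$, i.e.\ $n = \gamma m \gamma^{-1}$. Hence $\rho^\ast(\gamma)(H^\ast)_m \subseteq (H^\ast)_{\gamma m \gamma^{-1}}$, and applying the same argument to $\gamma^{-1}$ yields equality, matching exactly the compatibility required in the definition.

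For functoriality, given a morphism $\phi : H \to K$ in the finite-dimensional subcategory, I would define $\phi^\ast : K^\ast \to H^\ast$ by $\phi^\ast(f) := f \circ \phi$. Because $\phi$ preserves gradings, i.e.\ $\phi(H_m) \subseteq K_m$, the pullback $\phi^\ast$ sends $(K^\ast)_m = (K_m)^\ast$ into $(H_m)^\ast = (H^\ast)_m$, so the grading is preserved. $G$-equivariance of $\phi^\ast$ is immediate from $G$-equivariance of $\phi$ and the contragredient formula. The identities $(\psi \circ \phi)^\ast = \phi^\ast \circ \psi^\ast$ and $id^\ast = id$ are inherited from the corresponding properties of ordinary linear duality on $\vc$.

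There is no substantive obstacle here; the content is bookkeeping. The only point worth flagging is the role of finite-dimensionality in the first step: without it, one loses the canonical graded decomposition of $H^\ast$, and the contragredient action would fail to restrict to each graded piece individually. This is precisely why the statement is phrased as a functor on the full subcategory of finite-dimensional objects rather than on all of $\dcg$-$\md$.
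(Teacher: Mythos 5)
Your construction follows essentially the same route as the paper's proof --- contragredient action, a $G$-grading on $H^\ast$, the one-line compatibility check, and pullback functoriality --- but with one substantive deviation: you grade the dual by $(H^\ast)_m = (H_m)^\ast$, whereas the paper takes $(H^\ast)_m = (H_{m^{-1}})^\ast$, which is the choice forced by requiring the evaluation pairing $\tr : H \otimes H^\ast \to k$ to preserve $G$-degree. Both conventions satisfy Definition~\ref{defi:ggradedgmodule} (your verification of $\rho^\ast(\gamma)(H^\ast)_m = (H^\ast)_{\gamma m \gamma^{-1}}$ is correct for your convention, and degree-preservation of $\phi^\ast$ follows from your support characterization, which is in fact cleaner than the paper's argument on homogeneous vectors), so the proposition as literally stated is proved either way. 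But the paper fixes its convention ``hereafter,'' and the inverse in the grading is what makes the downstream degree bookkeeping come out right: the identification $[H^\ast]_g = (H_{g^{-1}})^\ast$ in Lemma~\ref{lem:nondegenerate}, and the matching of $x \in H^\ast_{\bg}$ against vectors of degree $r(\bg)$ in Proposition~\ref{pro:starbraid} and Corollary~\ref{cor:relatedbraidization}. With your convention those later statements would have to be rewritten, so in context the choice is not a matter of indifference.

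Separately, your closing remark about where finite-dimensionality enters is incorrect. Since $G$ is finite, $H = \bigoplus_{m \in G} H_m$ is a \emph{finite} direct sum, and the dual of a finite direct sum is always the direct sum of the duals, whatever the dimensions of the summands; the contragredient action likewise restricts to the graded pieces with no dimension hypothesis. So $H^\ast = \bigoplus_{m} (H^\ast)_m$ holds for arbitrary $G$-graded $G$-modules, and duality is a contravariant endofunctor on all of $\dcg$-$\md$. This does not damage your proof of the stated proposition (which only concerns finite-dimensional objects anyway), but the justification you flag as ``essential'' is not the reason for the hypothesis: finite-dimensionality is imposed because of how duals are used later in the paper --- perfect pairings, reflexivity, and the identification of $\hsn$ with multilinear forms on $\hn$ --- not for the existence of the graded module structure on $H^\ast$.
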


\begin{proof}
Given a finite dimensional $G$-graded $G$-module $H$,
its induced $G$-action on $H^\ast$ is given by the following formula.
\begin{equation}
  (\gamma \cdot x) (v) = x (\gamma ^{-1} \cdot v) \, .
  \label{eq:induced}
\end{equation}
One consistent choice of $G$-grading on $H^\ast$ can be obtained by requiring
that the trace map
\[
\tr: H \otimes H^\ast \to \C \, , \quad v \otimes x \mapsto x(v)
\]
are $G$-degree preserving.
Namely, the $G$-grading on $H^\ast$ can be given as
 \[ 
  (H^{\ast})_m = (H_{m^{-1}})^{\ast}\, .
\]
We observe that Equation~\eqref{eq:induced} implies that
\[
  \gamma \cdot (H^\ast)_{m} = (H^\ast)_{\gamma m \gamma^{-1}} \, .
\]

Let $\phi : H \to K$ be a morphism between finite dimensional objects.
Then $\phi^\ast : K^\ast \to H^\ast$ is a linear map.
To prove that $\phi^\ast$ preserves the $G$-degree, suppose that 
$x \in (K^\ast)_m$, $x \neq 0$.
If $\phi^\ast x = 0$ for any $x$, then $\phi^\ast$ preserves degree $m$.
If not, there exist a homogeneous vector $v \in H$ such that
$\phi^\ast x (v) \neq 0$.
But $\phi^\ast x (v) = x (\phi(v))$ and $\phi$ is degree-preserving,
it follows that $\deg_G v = m^{-1}$.
We conclude that $\phi^\ast x$ is homogeneous of $G$-degree $m$.
To show the $G$-equivariance, let $x \in K^\ast$ and $v \in H$. 
Using the $G$-equivariance of $\phi$, we compute
\begin{align*}
\phi^\ast(g \cdot x)(v) &= (g \cdot x)(\phi(v)) = x(g^{-1} \cdot \phi(v)) \\
&= x(\phi(g^{-1} \cdot v)) = \phi^\ast x(g^{-1} \cdot v) = g \cdot (\phi^\ast x)(v) \, .
\end{align*}

The conditions on composition and identity follows from that of linear maps.
\end{proof}

\begin{rem}
  We fix this $G$-graded $G$-module structure for dual spaces hereafter.
\end{rem}

The following theorem summarizes what we have been proving.

\begin{thm}
  Let $H$ be a finite dimensional $G$-graded $G$-module, and 
  $\circ$ be defined as in Proposition~\ref{pro:braidedalgebra}.
  Then $(\brhs,\circ)$ is
  a braided-commutative algebra with unity.
  Moreover, for each morphism $\phi : H \to K$ of finite dimensional 
  $G$-graded $G$-modules, the induced linear map 
  \[
\phi^\ast : \brks \to \brhs
  \]
  is a morphism of algebra with unity.
  \label{thm:ringmap}
\end{thm}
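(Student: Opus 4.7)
The plan is to reduce this to the covariant result Proposition~\ref{pro:braidedalgebra} applied to the dual object $H^\ast$, using Proposition~\ref{pro:dual} to handle functoriality in the contravariant setting. Since Propositions~\ref{pro:regular} and~\ref{pro:braidization} establish that $\dcg$-$\md$ is regular and braidized, every hypothesis of Proposition~\ref{pro:braidedalgebra} is available for any finite dimensional object of $\dcg$-$\md$.

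First I would observe that since $H$ is finite dimensional, Proposition~\ref{pro:dual} guarantees that $H^\ast$ is again a finite dimensional $G$-graded $G$-module, with the specific $G$-grading and $G$-action recorded there. Applying Proposition~\ref{pro:braidedalgebra} directly to $H^\ast$ yields that $\brhs = \br\llbracket H^\ast \rrbracket$ is a braided-commutative algebra with unity under the composition
\[
\brhs \otimes \brhs \hookrightarrow \ths \otimes \ths \xrightarrow{\mu} \ths \xrightarrow{\mathpzc{B}} \brhs,
\]
which is exactly the operation $\circ$ of Proposition~\ref{pro:braidedalgebra}. This establishes the first assertion with essentially no additional work.

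For the second assertion, given a morphism $\phi : H \to K$ in $\dcg$-$\md$ between finite dimensional objects, Proposition~\ref{pro:dual} produces a morphism $\phi^\ast : K^\ast \to H^\ast$ in the same category. The covariant functoriality statement of Proposition~\ref{pro:braidedalgebra}, applied to this morphism $\phi^\ast$ in $\bcf$, yields an induced algebra-with-unity morphism
\[
(\phi^\ast)_\ast : \br\llbracket K^\ast \rrbracket \to \br\llbracket H^\ast \rrbracket,
\]
and by construction this map is nothing other than the restriction to braided tensors of $(\phi^\ast)^{\otimes n}$ in each degree, which is precisely the map denoted $\phi^\ast : \brks \to \brhs$ in the statement. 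Composition and identity behave correctly because they already do so at the level of the ambient tensor functor $(-)^{\otimes n}$ and are preserved under the restriction of Proposition~\ref{pro:braidedalgebra}.

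There is no serious obstacle here: all the conceptual content has been loaded into the preceding propositions, and the only thing to verify is that the notational switch from the covariant setting to the dual setting is compatible. The main point requiring care is the identification of the two possible meanings of $\phi^\ast$, namely the functorial dual in $\dcg$-$\md$ versus the induced map on braided tensor algebras; once Proposition~\ref{pro:dual} is invoked, this reduces to checking that the diagram of Proposition~\ref{pro:braidedalgebra} for the morphism $\phi^\ast : K^\ast \to H^\ast$ of $\bcf$ reproduces the desired algebra map on $\brhs$ and $\brks$, which is immediate from the definitions.
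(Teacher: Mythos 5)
Your proposal is correct and follows essentially the same route as the paper, whose proof is exactly the combination of Propositions~\ref{pro:braidedalgebra}, \ref{pro:regular}, \ref{pro:braidization}, and \ref{pro:dual}; you have simply spelled out the dualization step and the identification of the two meanings of $\phi^\ast$ that the paper leaves implicit.
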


\begin{proof}

  This follows from Propositions~\ref{pro:braidedalgebra},  
  \ref{pro:regular}, \ref{pro:braidization}, and \ref{pro:dual}.

\end{proof}

\begin{rem}
  We call the elements of $\brnhs$ and $\brhs$ the \emph{braided $n$-linear forms}
  and \emph{braided multilinear forms}, respectively.
  \label{rem:braidedmultilinear}
\end{rem}

\begin{rem}
  We adopt the viewpoint that any finite dimensional $G$-graded $G$-module is
  a braided-commutative space with its ring of braided
  multilinear forms.
  \label{rem:trivialg}
\end{rem}

Lastly, we compare the $B_n$ actions on $\hn$ and $\hsn$.
We use the notation of Definition~\ref{defi:breflection} for the following 
proposition.

\begin{pro}
  For any $x \in \hsn$ and $v \in \hn$, the following equations hold.
  \[
(b \cdot x) (v) = x (r(b) \cdot v) \, .
  \]
  \label{pro:starbraid}
\end{pro}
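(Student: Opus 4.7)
The plan is to reduce the identity to the Artin generators $b_i$ of $B_n$ and then verify it by a direct computation.

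Since $r$ is an antihomomorphism, the identity is compatible with composition in $B_n$: if it holds for $b$ and $c$ separately, then
\[
(bc \cdot x)(v) = (b \cdot (c \cdot x))(v) = (c \cdot x)(r(b) \cdot v) = x(r(c) r(b) \cdot v) = x(r(bc) \cdot v).
\]
So it suffices to verify the identity for each generator $b_i$. By bilinearity we may further assume $x = x_1 \otimes \cdots \otimes x_n$ and $v = v_1 \otimes \cdots \otimes v_n$ are $G^n$-homogeneous, with $x_k \in (H^\ast)_{m_k}$ and $v_k \in H_{g_k}$.

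Apply the braiding formula in $\dcg$-$\md$ to get explicit expressions for $b_i \cdot x$ and $r(b_i) \cdot v = b_{n-i} \cdot v$. The braiding modifies only two adjacent factors, replacing the pair $x_i \otimes x_{i+1}$ at positions $i, i+1$ by $(m_i \cdot x_{i+1}) \otimes x_i$, and the pair $v_{n-i} \otimes v_{n-i+1}$ at positions $n-i, n-i+1$ by $(g_{n-i} \cdot v_{n-i+1}) \otimes v_{n-i}$. Under the pairing of $\hsn$ with $\hn$ obtained by iterating the evaluation $H^\ast \otimes H \to k$ in the rigid braided category $\dcg$-$\md$, the $k$-th factor of the dual pairs with the $(n+1-k)$-th factor of the primal. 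This position-reversal is exactly what aligns the braided positions $i, i+1$ on the dual side with the braided positions $n-i, n-i+1$ on the primal side, and encodes the content of $r(b_i) = b_{n-i}$.

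With this alignment in place, the unbraided factors of $(b_i \cdot x)(v)$ and $x(b_{n-i} \cdot v)$ match term-by-term, and the two modified factors both reduce to the product $x_{i+1}(m_i^{-1} \cdot v_{n-i+1}) \cdot x_i(v_{n-i})$, after applying the formula $(\gamma \cdot x)(v) = x(\gamma^{-1} \cdot v)$ for the induced $G$-action on $H^\ast$ and using the $G$-degree condition $g_{n-i} = m_i^{-1}$ required for $x_i(v_{n-i})$ to be nonzero (outside of which both sides trivially vanish). The main obstacle will be the bookkeeping: tracking the position reversal $k \leftrightarrow n+1-k$ imposed by the pairing and verifying that the $G$-action factors from the braidings cancel correctly under the $G$-degree constraints.
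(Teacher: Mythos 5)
Your proposal is correct in substance and follows essentially the same route as the paper's proof: reduce to the Artin generators, fix the position-reversing pairing (the paper's trace map sends $x_1 \otimes \cdots \otimes x_n \otimes v_1 \otimes \cdots \otimes v_n$ to $x_n(v_1) \cdot x_{n-1}(v_2) \cdots x_1(v_n)$, exactly your $k \leftrightarrow n+1-k$ convention, chosen ``so that there is no need of braiding''), and compare the two modified adjacent factors using the braiding formula and the induced dual action $(\gamma \cdot x)(v) = x(\gamma^{-1} \cdot v)$. Your common value $x_{i+1}(m_i^{-1}\cdot v_{n-i+1}) \cdot x_i(v_{n-i})$ matches the paper's explicit $i=1$ computation, including the degree-matching condition outside of which both sides vanish.

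One small logical gap in your reduction step: a general element of $B_n$ is a word in the generators \emph{and their inverses}, and your composition argument via $r(bc) = r(c)r(b)$ only shows that the set of braids satisfying the identity is closed under products. This is why the paper separately states and verifies $(b_i^{-1} \cdot x)(v) = x(b_{n-i}^{-1}\cdot v)$. In your framework the repair is one line: if $(b \cdot x)(v) = x(r(b)\cdot v)$ holds for all $x$ and $v$, substituting $x \mapsto b^{-1}\cdot x$ and $v \mapsto r(b)^{-1}\cdot v$, and using $r(b^{-1}) = r(b)^{-1}$ (as $r$ is an anti-automorphism), gives the identity for $b^{-1}$; hence the braids satisfying the identity form a subgroup containing the generators, which is all of $B_n$.
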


\begin{proof}
 Recall that the trace map is given by
 \begin{align*}
 \tr: \hsn \otimes \hn &\to \C \, ,\\ 
 x_1 \otimes \dots \otimes x_n 
\otimes v_1 \otimes \dots \otimes v_n 
&\mapsto x_n(v_1)\cdot x_{n-1}(v_2) \cdots x_1(v_n)
 \end{align*}
 so that there is no need of braiding.

 To prove the equation, it is enough to 
prove the following equations
  \begin{align*}
    (b_i \cdot x) (v) &= x (b_{n-i} \cdot v) \, , \\
    (b_i^{-1} \cdot x) (v) &= x (b_{n-i}^{-1} \cdot v) \, .
  \end{align*}

 By linearity, we assume that $x$ is
 a $G^n$-homogeneous monomial.
 For simplicity, we only consider $i=1$.
 Suppose that $x \in H^\ast_{\bs \gamma}$ in the notation of 
 Remark~\ref{rem:representation}, and write
 \[
   x = x_{\gamma_1} \otimes \dots \otimes x_{\gamma_n} \, .
 \]
 Then we have
 \[
   b_1 \cdot x = \gamma_1 \cdot x_{\gamma_2} \otimes
   x_{\gamma_1} \otimes x_{\gamma_3} \otimes \dots \otimes
   x_{\gamma_n} \, .
 \]
 It is enough to consider $G^n$-homogeneous $v$ of matching $G^n$-degrees, otherwise
 both sides are equal to zero. Hence, write
 \[
   v = v_{\gamma_n^{-1}} \otimes \dots \otimes
   v_{\gamma_3^{-1}} \otimes v_{\gamma_1^{-1}} \otimes v_{\gamma_1 \gamma_2^{-1} \gamma_1^{-1}} \, .
 \]
 Then we have
 \[
   b_{n-1} \cdot v = v_{\gamma_n^{-1}} \otimes \dots \otimes
   v_{\gamma_3^{-1}} \otimes \gamma_1^{-1} \cdot v_{\gamma_1 \gamma_2^{-1} \gamma_1^{-1}} \otimes
   v_{\gamma_1^{-1}} \, .
 \]
 We compute
 \begin{align*}
   (b_1 \cdot x)(v) 
   &= x_{\gamma_n}(v_{\gamma_n^{-1}}) \cdots
   x_{\gamma_3}(v_{\gamma_3^{-1}}) \cdot x_{\gamma_1}(v_{\gamma_1^{-1}}) \cdot 
   (\gamma_1 \cdot x_{\gamma_2})(v_{\gamma_1 \gamma_2^{-1} \gamma_1^{-1}}) \\
   &= x_{\gamma_n}(v_{\gamma_n^{-1}}) \cdots
   x_{\gamma_3}(v_{\gamma_3^{-1}}) \cdot x_{\gamma_1}(v_{\gamma_1^{-1}}) \cdot 
   x_{\gamma_2}(\gamma_1^{-1} \cdot v_{\gamma_1 \gamma_2^{-1} \gamma_1^{-1}}) \\
   &= x(b_{n-1} \cdot v) \, 
 \end{align*}
 by Equation~\eqref{eq:induced}.
 The second equation can be proved in a similar fashion.
\end{proof}

This leads to the following characterization of braided multilinear forms
on $H$.

\begin{cor}
  For any finite dimensional $G$-graded $G$-module $H$, we have
  \[
    \brnhs =  \{ x \in \hsn | x(v) = x(b\cdot v) \
      \text{for any} \ b \in B_n \ \text{and} \ v \in \hn \}\, ,
  \]
  the subspace of $B_n$-invariant multilinear forms. 
  \label{cor:braided}
\end{cor}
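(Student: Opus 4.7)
The plan is to unpack the definition of $\brnhs$ and translate the $B_n$-invariance condition from $(H^\ast)^{\otimes n}$ into the dual condition on $H^{\otimes n}$ via the pairing, using Proposition~\ref{pro:starbraid} as the bridge.

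First, I would recall that by Definition~\ref{defi:bten}, $\brnhs = (\hsn)^{B_n}$, so $x \in \brnhs$ iff $b \cdot x = x$ for every $b \in B_n$. Since $\hsn$ is finite dimensional and $H^{\otimes n}$ is its dual, the equality $b \cdot x = x$ holds iff $(b \cdot x)(v) = x(v)$ for every $v \in \hn$. Applying Proposition~\ref{pro:starbraid}, this becomes the requirement
\[
  x(r(b) \cdot v) = x(v) \quad \text{for all } b \in B_n \text{ and } v \in \hn.
\]

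Second, I would argue that $r : B_n \to B_n$ is a bijection, so the quantifier ``for all $b \in B_n$'' can be replaced by ``for all $r(b) \in B_n$'' without changing the content of the statement. Indeed, $r$ is an antihomomorphism by Definition~\ref{defi:breflection}, hence $r^2$ is a homomorphism; on generators $r^2(b_i) = r(b_{n-i}) = b_i$, so $r^2 = \mathrm{id}_{B_n}$, proving $r$ is a bijection. Substituting $b' = r(b)$ in the displayed condition therefore yields the equivalent condition
\[
  x(b' \cdot v) = x(v) \quad \text{for all } b' \in B_n \text{ and } v \in \hn,
\]
which is exactly the membership condition for the right-hand side of the claimed equality.

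Combining the two steps gives both inclusions simultaneously, completing the identification. The main observation driving the proof is Proposition~\ref{pro:starbraid}, together with the elementary fact that $r$ is an involution on $B_n$; there is no serious obstacle, since the content is essentially bookkeeping with the dual $B_n$-action.
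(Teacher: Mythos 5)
Your proof is correct and takes essentially the same route as the paper's: both unwind the definition $\brnhs = [\hsn]^{B_n}$ and transfer the invariance condition across the pairing via Proposition~\ref{pro:starbraid}. The only cosmetic difference is that you apply the proposition for arbitrary $b \in B_n$ and invoke the involutivity of $r$ (which you rightly verify via $r^2 = \mathrm{id}$ on generators), whereas the paper checks invariance only on the standard generators $b_i$ and uses that $b_i \mapsto b_{n-i}$ permutes them; these are interchangeable bookkeeping choices.
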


\begin{proof}
\begin{flalign*}
  &x \in \brnhs &\\
\iff &x \in [\hsn]^{B_n}  \\
\iff &x = b \cdot x \, , \quad \forall \, b \in B_n \, \\
\iff &x(v) = (b_{i} \cdot x)(v) = x(b_{n-i} \cdot v) \, , 
\quad \{b_i\} \ \text{standard generators of}\ B_n \, , \\
&\forall \, v \in \hn \\
\iff &x(v) = x(b \cdot v) \, ,\quad  \forall \, b \in B_n \, ,\quad \forall \, v \in \hn \, .
\end{flalign*}
\end{proof}

The previous proposition also allows us to relate the $n$th braidization 
on $\brnh$ and $\brnhs$. 

\begin{cor}
For any $x \in \hsn$ and $v \in \hn$, we have
\[
  [\bn(x)](v) = x(\bn(v)) \, .
\]
\label{cor:relatedbraidization}
\end{cor}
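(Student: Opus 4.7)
By bilinearity of the trace pairing and linearity of the braidization, we may reduce to the case where $x \in H^\ast_{\bg}$ and $v \in H_{\b{h}}$ are $G^n$-homogeneous for some $\bg, \b{h} \in G^n$. Let $C_\bg$ be the component of $\bg$ in $\gn$. If $\b{h}$ does not lie in the reflected component $r(C_\bg)$, then $\bn(x)$ is supported in degrees inside $C_\bg$ and pairs trivially with $v$ by degree considerations; the right-hand side vanishes symmetrically, so both sides agree.

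Otherwise, expanding both sides using the definition of $\bn$ from Proposition~\ref{pro:braidization} and applying Proposition~\ref{pro:starbraid} to convert the dual action into the action on $\hn$ via the reflection antihomomorphism on $B_n$, one obtains $[\bn(x)](v) = \frac{1}{|A_\bg|} \sum_{b_\bg} x(r(b) \cdot v)$, where $b \in B_n$ is any representative of $b_\bg$ and the sum is restricted to $b_\bg \in \hom_\gn(\bg, r(\b{h}))$ since the remaining terms contribute zero by degree matching. Symmetrically, $x(\bn(v)) = \frac{1}{|A_{\b{h}}|} \sum_{b_{\b{h}}} x(b_{\b{h}} \cdot v)$ with the sum over $b_{\b{h}} \in \hom_\gn(\b{h}, r(\bg))$.

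To compare, I would invoke the reflection functor $r: (\gn)^\circ \to \gn$ from Definition~\ref{defi:reflection}, which is fully faithful by Proposition~\ref{pro:fullyfaithful} and thus induces a bijection $\hom_\gn(\bg, r(\b{h})) \to \hom_\gn(\b{h}, r(\bg))$ sending $b_\bg \mapsto r(b_\bg)$. By the very construction of $r$ on morphisms in Definition~\ref{defi:reflection}, the groupoid morphism $r(b_\bg)$ acts on $v \in H_{\b{h}}$ in exactly the same way as the braid $r(b) \in B_n$ acts on $v$, so the two sums match term by term under this bijection. The normalization factors agree, $|A_\bg| = |A_{\b{h}}|$, by Equation~\eqref{eq:nummorphism} together with the bijections on both objects and hom-sets that $r$ supplies between $C_\bg$ and $C_{\b{h}} = r(C_\bg)$.

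The main subtlety is in the third step: I must be sure that although the sum on the left indexes braid-group representatives $b$ via their classes $b_\bg$ at $\bg$, the associated $r(b)$ acts on the degree $\b{h}$ in a well-defined way compatible with the groupoid morphism $r(b_\bg) \in \hom_\gn(\b{h}, r(\bg))$. This is precisely the content of the commutative diagram preceding Definition~\ref{defi:reflection}, which shows that $r$ on $B_n$ descends to a functor on $\gn$; once this is in hand, the identity reduces to the bijective re-indexing of the two sums.
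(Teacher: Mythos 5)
Your proof is correct and follows essentially the same route as the paper's: reduce to $G^n$-homogeneous $x$ and $v$, restrict both sums by degree matching, convert the dual action via Proposition~\ref{pro:starbraid}, and use the fully faithful reflection functor of Definition~\ref{defi:reflection} to re-index the sums and match the normalizations $|A_{\bg}| = |A_{\b{h}}|$. The subtlety you flag about representative-independence of $r(b)$ is exactly the point the paper settles with the commutative diagram and remark preceding Proposition~\ref{pro:fullyfaithful}, so nothing is missing.
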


\begin{proof}
  We use the notation of Definition~\ref{defi:reflection}.  
  
  Let $x \in H^\ast_{\bg}$. 
  Suppose that $C$ is the component of $\bg \in \gn$ and choose $\bg' \in C$.
  We assume that $v \in H_{r(\bg')}$.
  Otherwise both sides are zero.
  Then we have
\[
  [\bn(x)](v) = \frac{1}{|A_{\bg}|} 
  \left[\sum_{b_{\bg} \in A_{\bg}} b_{\bg} \cdot x \right] (v)
  = \frac{1}{|A_{\bg}|} \sum_{b_{\bg} \in \hom(\bg, \bg')} 
  \left[(b_{\bg} \cdot x)(v) \right] \, .
\]
Choose $b \in B_n$ for each $b_{\bg} \in \hom_{\gn}(\bg, \bg')$
so that $b$ acts as $b_{\bg}$ on $H^\ast_{\bg}$.
Let $A'$ be the subset of $B_n$ of such $b$'s. 
Using Proposition~\ref{pro:starbraid} we have
\begin{align*}
  [\bn(x)](v) &= \frac{1}{|A_{\bg}|} \sum_{b \in A'} \left[ (b \cdot x) (v) \right] \\
  &= \frac{1}{|A_{\bg}|} \sum_{b \in A'} x(r(b) \cdot v) 
  = x \left( \frac{1}{|A_{\bg}|} \sum_{b \in A'} r(b) \cdot v \right)\, .
\end{align*}
By Proposition~\ref{pro:fullyfaithful}, we see that the action of 
$\left\{ r(b)| b \in A' \right\}$
is the same as that of $\hom_{\gn}(r(\bg'), r(\bg))$. 
Since $x \in H^\ast_{\bg}$ and 
$|A_{\bg}| = |A_{r(\bg')}|$ by Proposition~\ref{pro:fullyfaithful},
we conclude
\[
  [\bn(x)](v) = x (\bn (v)) \, .
\]

\end{proof}

\subsection{Distinguished Subspaces}

For any finite dimensional $G$-graded $G$-module $H$, we have two distinguished
subspaces $H_e$ and $H^G$. $H_e$ denotes the $G$-homogeneous
subspace of $G$-degree $e$.
$H^G$ is the subspace of $G$-invariants.

Let us first consider $H_e$. 
Since $H_e$ is a $G$-graded $G$-module concentrated at $G$-degree $e$, 
we can talk about its braided multilinear forms.
But in a similar way as Remark~\ref{rem:trivial},
notice that the braided multilinear forms are the same as
symmetric multilinear forms in this case.
Remark~\ref{rem:trivialg} shows that this identification is as rings.
Let $i_e : H_e \hookrightarrow H$ be the inclusion map.
Note that this is a morphism of $G$-graded $G$-modules.
Applying Theorem~\ref{thm:ringmap}, we have the following proposition.

\begin{pro}
 We have the following induced morphism of algebras with unity.
 \[
(i_e)^\ast : \brhs \to \br \llbracket H_e^\ast \rrbracket 
= \sym \llbracket H^\ast_e \rrbracket
 \]
 \label{pro:untwisted}
\end{pro}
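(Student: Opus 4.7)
The plan is to obtain this corollary as a direct application of Theorem~\ref{thm:ringmap} to the inclusion $i_e$. The preparatory step is to verify that $H_e$ is itself a finite dimensional $G$-graded $G$-module with its grading concentrated in degree $e$: stability of $H_e$ under the $G$-action is automatic from $\rho(\gamma)H_e = H_{\gamma e \gamma^{-1}} = H_e$ for every $\gamma \in G$. With this structure, $i_e : H_e \hookrightarrow H$ is tautologically grading-preserving and $G$-equivariant, hence a morphism in $\dcg$-$\md$.

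Applying Theorem~\ref{thm:ringmap} to $i_e$ then produces the induced morphism of algebras with unity
\[
(i_e)^\ast : \brhs \to \br\llbracket H_e^\ast \rrbracket,
\]
so nothing further is needed to obtain the map itself. What remains is the identification of the codomain $\br\llbracket H_e^\ast \rrbracket$ with $\sym\llbracket H_e^\ast \rrbracket$ as algebras with unity. For this I would observe that on $H_e^{\otimes n}$ the braiding $\Phi$ of $\dcg$-$\md$ (Remark~\ref{rem:category}) reduces to the ordinary flip: for $v, w \in H_e$,
\[
\Phi(v \otimes w) = e \cdot w \otimes v = w \otimes v.
\]
Thus the $B_n$-action on $H_e^{\otimes n}$ factors through the canonical projection $B_n \twoheadrightarrow S_n$, and Proposition~\ref{pro:starbraid} transfers this factorization to $(H_e^\ast)^{\otimes n}$. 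Taking $B_n$-invariants on the dual side therefore yields $\br^n H_e^\ast = \sym^n H_e^\ast$, and a direct inspection of the braidization formula in the proof of Proposition~\ref{pro:braidization} shows that, restricted to $H_e^\ast$, it specializes to the classical symmetrization of Definition~\ref{defi:symmetrization}. By Remark~\ref{rem:trivialg} the two algebra multiplications on this common underlying space then coincide.

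There is no substantial obstacle here; the proposition is essentially an assembly of Theorem~\ref{thm:ringmap} with the observation that the Drinfel'd-double braiding degenerates to the flip on $H_e$. That degeneration is the only step that is not literally quoted from the earlier results, but it is a one-line computation from the explicit formula for $\Phi$.
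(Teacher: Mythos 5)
Your proposal is correct and follows essentially the same route as the paper: the paper likewise observes that $H_e$ is a $G$-graded $G$-module concentrated in degree $e$, that braided multilinear forms on it coincide with symmetric multilinear forms (as rings, since the Drinfel'd-double braiding degenerates to the flip there), and then applies Theorem~\ref{thm:ringmap} to the inclusion $i_e$. Your extra verifications (stability of $H_e$ under $\rho$, the factorization of the $B_n$-action through $S_n$, and the braidization reducing to the classical symmetrization of Definition~\ref{defi:symmetrization}) are exactly the details the paper leaves implicit.
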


Next, we consider the subspace of $G$-invariants, $H^G$.
Let $\bar{g} \in \bar{G}$, the conjugacy class of $g$.
Since
$\bigoplus_{h \in \overline{g}} H_h $ is an invariant space for any $g \in G$, 
we see that $H^G$ has a $\overline{G}$-grading.
Let $H^G_{\overline{g}}$ denote the homogeneous part of $\bar{G}$-degree $\bar{g}$.

In particular, $H^G$ is not a $G$-graded $G$-module since
$H_h$ is not an invariant subspace of 
the map $g : H \to H$ in general.
Hence we cannot talk about braided multilinear forms on $H^G$, but
we can consider the restrictions of those of $H$ on $H^G$.

\begin{pro}
 Let $i^G : H^G \hookrightarrow H$ be the inclusion.
 Then we have the linear map
\[
  (i^G)^\ast : \brhs \to \shgs \, , \quad \sum_{n=0}^{\infty} x_n 
  \mapsto \sum_{n=0}^{\infty} x_n|_{\hgn} \, ,
\]
where $x_n \in \brnhs$.
\label{pro:invarsym}
\end{pro}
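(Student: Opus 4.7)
The plan is to verify that the restriction of each braided $n$-linear form to $\hgn$ is $S_n$-invariant, since then the image lies in $\snhgs$ and the well-definedness (and obvious linearity) of $(i^G)^\ast$ as a map $\brhs \to \shgs$ follows. By Corollary~\ref{cor:braided}, any $x \in \brnhs$ satisfies $x(v) = x(b \cdot v)$ for all $b \in B_n$ and $v \in \hn$. So I would reduce the proposition to the claim that the $B_n$-action on $\hgn \subset \hn$ coincides with the ordinary $S_n$-action by position swap.

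The heart of the argument is a single computation: for $v, w \in H^G$, I would prove $\Phi(v \otimes w) = w \otimes v$. Decomposing $v = \sum_g v_g$ with $v_g \in H_g$, the braiding formula in Remark~\ref{rem:category} gives
\[
\Phi(v \otimes w) = \sum_g (g \cdot w) \otimes v_g.
\]
Since $w \in H^G$ means $g \cdot w = w$ for every $g \in G$, the right-hand side collapses to $w \otimes \sum_g v_g = w \otimes v$.

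Applying this at each braid generator $b_i$, the two adjacent factors being swapped both lie in $H^G$, so the intermediate tensor stays inside $\hgn$ and I can iterate along any word in the $b_i$'s. Thus for any lift $b \in B_n$ of a given $\sigma \in S_n$, we have $b \cdot (v_1 \otimes \cdots \otimes v_n) = v_{\sigma(1)} \otimes \cdots \otimes v_{\sigma(n)}$ whenever each $v_i \in H^G$. Combined with the $B_n$-invariance of $x$, this yields $x(v_1 \otimes \cdots \otimes v_n) = x(v_{\sigma(1)} \otimes \cdots \otimes v_{\sigma(n)})$, and therefore $x|_{\hgn} \in \snhgs$, as needed.

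There is no real obstacle; the computation is short. The only subtlety is to unwind the $\dcg$-$\md$ braiding convention of Remark~\ref{rem:category} carefully and to observe that $G$-invariance of just \emph{one} of the two factors already trivializes the braiding, which is precisely what lets the induction along a braid word pass through with the intermediate tensors staying in $\hgn$.
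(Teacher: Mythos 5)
Your proposal is correct and is essentially the paper's own proof: the paper likewise reduces the statement to showing that on $\hgn$ every $b \in B_n$ acts as its image under the standard surjection $B_n \to S_n$, verified by the same computation that decomposes one tensor factor into $G$-homogeneous pieces $v_g \in H_g$ and uses $G$-invariance of the adjacent factor to collapse $\sum_g (g \cdot w) \otimes v_g$ to $w \otimes v$, before invoking Corollary~\ref{cor:braided}. The only cosmetic difference is that the paper states the key identity as $b \cdot v = s(b) \cdot v$ for the homomorphism $s : B_n \to S_n$ rather than isolating the single-swap lemma $\Phi(v \otimes w) = w \otimes v$ as you do.
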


\begin{proof}
 Let $x \in \brnhs$ and $v \in \hgn \subset \hn$. 
 Let $s : B_n \to S_n \, , b_i \mapsto (i, i+1)$ be the standard surjective homomorphism.
 Then for any $b \in B_n$, we have $b \cdot v = s(b) \cdot v$.
 To see this, suppose $v = v_1 \otimes \dots \otimes v_n$ with $v_i \in H^G$.
 Suppose further that $v_1 = \sum_{g \in G} v_g$, where $v_g \in H_g$.
 Note that each $v_g$ need not be in $H^G$.
 We consider $b_1 \cdot v$. The other generators are similar. We have
 \begin{align*}
   b_1 \cdot v &= b_1 \cdot \left[\left(\sum_{g \in G} v_g\right) \otimes v_2 \otimes \dots 
   \otimes v_n \right] \\
   &= \left( \sum_{g \in G} g \cdot v_2 \otimes v_g \right) \otimes v_3 \otimes \dots 
   \otimes v_n \\
   &= \sum_{g \in G} v_2 \otimes v_g \otimes v_3 \otimes \dots \otimes v_n \\
   &= v_2 \otimes \left( \sum_{g \in G} v_g \right) \otimes v_3 \otimes \dots \otimes v_n 
   = v_2 \otimes v_1 \otimes v_3 \otimes \dots \otimes v_n \, .
 \end{align*}
 Given $\sigma \in S_n$ choose $b \in B_n$ such that $s(b) = \sigma$.
 Using Corollary~\ref{cor:braided}, we have 
\[ 
 x (\sigma \cdot v) = x (s(b) \cdot v) = x (b \cdot v) = x (v) \, ,
\] 
showing that $\igs (x) \in \snhgs$.
\end{proof}

\begin{rem}
  $\igs$ need not preserve the multiplication since $\bn$ on $\hsn$ and
  $\sn$ on $\hgsn$ need not be compatible.
\end{rem}

We mention the following functorial property.

\begin{pro}
  Suppose $\phi : H \to K$ is a morphism of finite dimensional $G$-graded $G$-modules.
  Then we have the following morphisms of algebras with unity
  \begin{align*}
    \phi^\ast : \sym \llbracket K_e^\ast \rrbracket 
    \to \sym \llbracket H_e^\ast \rrbracket \, , \quad
    \phi^\ast : \skgs \to \shgs \, .
  \end{align*}
\end{pro}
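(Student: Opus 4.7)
The plan is to reduce both statements to results already established in the excerpt by restricting $\phi$ to the appropriate subspaces. First I would observe that because $\phi$ is a morphism of $G$-graded $G$-modules, it preserves the $G$-grading, so it restricts to a linear map $\phi_e : H_e \to K_e$; moreover, since the $G$-action on $H_e$ and $K_e$ is internal (their only nonzero graded pieces are in degree $e$), $\phi_e$ is again a morphism of $G$-graded $G$-modules (with trivial grading). Similarly, $G$-equivariance of $\phi$ ensures that $\phi(H^G) \subset K^G$, so we get a linear map $\phi^G : H^G \to K^G$.

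For the first assertion, I would apply Theorem~\ref{thm:ringmap} to $\phi_e : H_e \to K_e$, producing a morphism of algebras with unity $(\phi_e)^\ast : \br \llbracket K_e^\ast \rrbracket \to \br \llbracket H_e^\ast \rrbracket$. The key observation, analogous to Remark~\ref{rem:trivial}, is that on $G$-graded $G$-modules concentrated in degree $e$ the braiding in $\dcg$-$\md$ reduces to the usual flip (since the formula for $\Phi$ involves the grading of the left factor, which is $e$), so the $B_n$-action on $H_e^{\otimes n}$ factors through $S_n$. Hence $\br^n H_e^\ast = \sym^n H_e^\ast$ identically as subspaces and as algebras under the juxtaposition-then-braidize (respectively, -then-symmetrize) multiplication, so $(\phi_e)^\ast$ is the desired algebra map $\sym \llbracket K_e^\ast \rrbracket \to \sym \llbracket H_e^\ast \rrbracket$.

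For the second assertion, I would simply apply the classical Proposition~\ref{pro:symfunctorial} to the linear map $\phi^G : H^G \to K^G$ between the underlying (ungraded) finite dimensional vector spaces, obtaining a morphism of algebras with unity $(\phi^G)^\ast : \sym \llbracket (K^G)^\ast \rrbracket \to \sym \llbracket (H^G)^\ast \rrbracket$. No braiding is involved because $H^G$ and $K^G$ are regarded purely as vector spaces equipped with their commutative algebras of symmetric multilinear forms.

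I do not expect a serious obstacle here; the only point requiring any care is the identification of the braided tensor algebra with the symmetric tensor algebra in degree $e$, which I would verify by unpacking the formula for $\Phi$ and the $n$th braidization $\mathpzc{B}_n$ on $H_e^{\otimes n}$ and checking that they coincide with the standard flip and $\mathpzc{S}_n$ on $H_e^{\otimes n}$, respectively. Once that identification is in place, both statements follow immediately from the functoriality already proved (Theorem~\ref{thm:ringmap} and Proposition~\ref{pro:symfunctorial}).
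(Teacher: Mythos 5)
Your proposal is correct and follows essentially the same route as the paper: restrict $\phi$ to $H_e \to K_e$ (a morphism of $G$-graded $G$-modules) and apply Theorem~\ref{thm:ringmap}, then restrict to $H^G \to K^G$ by $G$-equivariance and apply Proposition~\ref{pro:symfunctorial}. The identification of braided with symmetric multilinear forms in degree $e$, which you propose to verify by hand, is exactly the point the paper settles just before Proposition~\ref{pro:untwisted} via Remarks~\ref{rem:trivial} and~\ref{rem:trivialg}, so no new argument is needed there.
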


\begin{proof}
 Since $\phi$ preserves the $G$-grading, it maps $H_e$ into $K_e$.
 Note that this restriction is also a morphism of $G$-graded $G$-modules.
 Hence the existence of the first map follows from Theorem~\ref{thm:ringmap}.

 Since $\phi$ is $G$-equivariant, it maps $H^G$ into $K^G$.
 Applying Proposition~\ref{pro:symfunctorial}, we have the second map.
\end{proof}

\section{$G$-Frobenius Manifolds}
\label{sec:gfm}

\subsection{Ordinary Frobenius Manifolds}
\label{sec:FM}

We review the concept of formal Frobenius manifold in this section.
See \cite{km}, \cite{manin} for details.

Let $H$ be a finite dimensional vector space, and fix a basis $\{\partial_a\}$.
Let $\{x^a\}$ be its dual basis.
Recall from Section~\ref{ssub:functions} that we have the identification
$\symhs = k\llbracket H^\ast \rrbracket$,
which we interpret as functions on $H$.
We identify the vectors in $H$ with differential operators on $\symhs$.
Then we can regard $\xh :=\symhs \otimes H$ to be global vector fields on $H$.
Suppose also that we have a symmetric nondegenerate bilinear form $g$ on $H$.

Given a function $Y \in \symhs$, we define a multiplication $\circ_Y$ 
on $\xh$ in the following way.
\[
  \partial_a \circ_Y \partial_b := \partial_a \partial_b \partial_k Y g^{kl} \partial_l \, ,
\]
and extending to $\xh$ using the symmetric monoidal structure of vector spaces and
commutative ring structure of $\symhs$. 

Since the order of differentiation does not matter, we see that $\circ_Y$ is commutative.
But $\circ_Y$ need not be associative. 
In fact, its associativity is equivalent to the following 
system of partial differential equations known as WDVV-equations.
We use the notation $Y_a := \partial_a Y$, etc.
\[
  Y_{abk} g^{kl} Y_{lcd} = Y_{bck} g^{kl} Y_{lcd} \, .
\]
\begin{defi}
  $(H,g,Y)$ is a \emph{formal Frobenius manifold} if $Y$ satisfies the WDVV-equations. 
\end{defi}
It follows that a formal Frobenius manifold yields a commutative algebra structure
on $\xh$.

A closely related concept is that of cohomological field theory(CohFT) of 
Kontsevich-Manin\cite{km}.
For a definition of it, see Section~\ref{ssec:gcohft}.
They proved the following theorem.

\begin{thm}
 A formal Frobenius manifold is equivalent to the genus zero
 part of a CohFT. 
\end{thm}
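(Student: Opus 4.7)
The plan is to prove both directions of the Kontsevich-Manin equivalence \cite{km}. Recall that a genus-zero CohFT on $(H,g)$ consists of a system of $S_n$-equivariant classes $I_{0,n} : H^{\otimes n} \to H^\ast(\mzb{n};k)$ for $n \geq 3$, satisfying a gluing axiom along the boundary divisors of $\mzb{n}$, in which internal edges are contracted by $g^{-1}$.

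For the direction CohFT $\Rightarrow$ Frobenius manifold, I would integrate the CohFT classes against the fundamental class to obtain
\[
Y_n(v_1, \ldots, v_n) := \int_{\mzb{n}} I_{0,n}(v_1 \otimes \cdots \otimes v_n).
\]
By $S_n$-equivariance of $I_{0,n}$ each $Y_n$ is a symmetric $n$-linear form, so by the identification $\symhs = k\hs$ of Section~\ref{ssub:functions} the sum $Y := \sum_{n \geq 3} Y_n / n!$ defines an element of $k\hs$. Applying the gluing axiom to the three boundary points of $\mzb{4} \cong \mathbb{P}^1$, which are linearly equivalent, expresses the four-point integral $\int_{\mzb{4}} I_{0,4}(\partial_a,\partial_b,\partial_c,\partial_d)$ as $Y_{abk}\, g^{kl}\, Y_{lcd}$ in three ways corresponding to the three stable trees with two trivalent vertices and four labeled legs; equating them gives precisely the WDVV equations for $Y$. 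The nondegenerate symmetric bilinear form $g$ supplies the metric.

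For the converse, starting from $(H,g,Y)$ satisfying WDVV, I would reconstruct $I_{0,n}$ using Keel's presentation of $H^\ast(\mzb{n};k)$ as generated by boundary divisor classes modulo pullbacks of the four-point relation. Concretely, on each boundary stratum, labeled by a stable tree with $n$ legs, one places the product of lower-arity tensors $Y_m$ contracted by $g^{ij}$ along internal edges, and extends linearly. The gluing axiom then holds tautologically by construction, and one checks by induction on $n$ that WDVV ensures the assignment descends from the free abelian group on boundary strata to a well-defined class in $H^\ast(\mzb{n};k)$. Finally, the two constructions are mutual inverses because integration of the reconstructed $I_{0,n}$ over $\mzb{n}$ recovers the top-stratum coefficient, which is $Y_n$.

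The main obstacle is the reverse direction: verifying that the proposed $I_{0,n}$ respects every Keel relation in $H^\ast(\mzb{n};k)$ requires a delicate induction tracking the combinatorics of stable trees and repeatedly reducing higher relations to the fundamental four-point WDVV relation. The forward direction, by contrast, is essentially bookkeeping once one knows that the three boundary points of $\mzb{4} \cong \mathbb{P}^1$ are linearly equivalent.
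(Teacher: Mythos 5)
The paper does not actually prove this theorem: it cites Kontsevich--Manin \cite{km} and only outlines the direction CohFT $\Rightarrow$ Frobenius manifold, exactly along your lines (set $Y_n(v) = \int_{\mzb{n}}\Lambda_n(v)$ and $Y=\sum_{n\ge 3}Y_n/n!$, then invoke the axioms and the topology of $\mzb{n}$). Your forward direction is thus the same route as the paper's sketch, and your converse via Keel's presentation is the content of the reconstruction theorem in \cite{km}, which the paper omits entirely. However, your forward direction as written has a genuine gap at the key step: integrating $I_{0,4}$ over the three boundary points of $\mzb{4}\cong\mathbb{P}^1$ and equating the results yields only the lowest-order term of WDVV, namely the associativity of the product determined by the cubic form $Y_3$ alone. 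The WDVV equations for the full potential $Y$ are identities of formal power series, and their higher-degree parts do not live on $\mzb{4}$. The standard repair, which your write-up skips, is to work on $\mzb{n}$ for every $n\ge 4$: pull back the linear equivalence $D(ab|cd)\sim D(ac|bd)$ along the forgetful morphism $\mzb{n}\to\mzb{4}$, integrate $I_{0,n}\bigl(\partial_a\otimes\partial_b\otimes\partial_c\otimes\partial_d\otimes\gamma^{\otimes(n-4)}\bigr)$ against both divisor classes, and apply the gluing axiom. The gluing then produces a sum over all partitions of the remaining $n-4$ marked points between the two branches, and it is precisely this sum that assembles, degree by degree, into the third partial derivatives $Y_{abk}$, $Y_{lcd}$ of the full potential rather than of its truncation. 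Without this step you have proved that $(H,g,Y_3)$ is a Frobenius algebra, not that $(H,g,Y)$ is a formal Frobenius manifold. (Compare the paper's Lemma~\ref{lem:modified}, which implicitly uses the fact that WDVV splits into such degree-by-degree identities.)

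On the converse, your outline is essentially right but the well-definedness issue is cleaner stated dually: by Keel, the boundary strata classes generate $H_\ast(\mzb{n})$, so a cohomology class $I_{0,n}$ is determined by its integrals over all strata; one prescribes these integrals as edge-contracted products of the $Y_m$ and must verify consistency with every homological relation among strata. Since all such relations are generated by pullbacks and pushforwards of the single $\mzb{4}$ relation, the verification reduces to WDVV, which is the induction you gesture at. This is more than the paper supplies, but if you intend it as a proof rather than a pointer to \cite{km}, the reduction of an arbitrary Keel relation to the four-point one is exactly the part that needs to be written out, and it is nontrivial.
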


  We outline the proof of one direction of this theorem. 
  Namely, we review how one obtains a formal Frobenius manifold
  out of a CohFT.

  Suppose that we are given a CohFT $\Lambda := (H,g, \{\Lambda_n\})$.
For each $n \geq 3$, the $n$th correlation function $Y_n$ of $\Lambda$ is 
a symmetric $n$-linear form on $H$ defined as
\[
  Y_n : \hn \to k \, , \quad v \mapsto \int_{\mzb{n}} \Lambda_n(v) \, ,
\]
where $\mzb{n}$ denotes the moduli space of rational stable curves with $n$ marked points.
Now, set
\[
  Y := \sum_{n \geq 3} \frac{1}{n!} Y_n \in \symhs \, .
\]
Then the axioms of CohFT and the topology of $\mzb{n}$ together
imply that $Y$ satisfies the WDVV-equations.

\subsection{$G$-Frobenius Algebras}
\label{sec:GFA}
We recall the definition of $G$-Frobenius algebras 
(also known as Turaev algebras, or crossed $G$-algebras) in this section.
See \cite{jkk}, \cite{orbifolding}, \cite{kp}, \cite{ms} and \cite{turaev} for details.

These are  certain algebra structures on $G$-graded $G$-modules.
First, we put the following compatibility condition on $G$-action with
the $G$-grading.
Let $\left( H, \rho \right)$ be a $G$-graded $G$-module.

\begin{defi}
  $(H,\rho)$ is \emph{self-invariant} if $\gamma$ acts on
  $H_{\gamma}$ trivially, for any $\gamma \in G$.
\end{defi}

We consider a bilinear form on $(H, \rho)$ that is compatible with the $G$-action.

\begin{defi}
  A symmetric bilinear form $\eta$ on $(H,\rho)$ is \emph{$G$-invariant} if  
  \begin{equation*}
   \eta(\rho(\gamma)v_1, \rho(\gamma)v_2) = \eta(v_1, v_2) 
  \end{equation*}
  for any $\gamma \in G$ and $v_1, v_2 \in H$.
\end{defi}

Our bilinear form should also be compatible with the $G$-grading in the following sense.
Let $v_g$, $v_h \in H$ be $G$-homogeneous vectors of $G$-degree $g$ and $h$.
\begin{defi}
  A bilinear form $\eta$ on $H$ \emph{preserves the $G$-grading} if
$\eta(v_g, v_h)$ equals zero unless $gh = e$.
\end{defi}

\begin{rem}
  $\eta$ preserves the $G$-grading if and only if
  $\eta \in [(H^\ast)^{\otimes 2}]_e$,
  the $G$-degree $e$ part.
\end{rem}

\begin{rem}
  We note that a symmetric bilinear form on a self-invariant
  $G$-graded $G$-module that preserves the $G$-grading is actually
  a braided bilinear form in the sense of Remark~\ref{rem:braidedmultilinear}.
\end{rem}

We observe the following characterization
about  nondegenerate bilinear forms on $H$.

\begin{lem}
  Suppose $\eta \in [(H^\ast)^{\otimes 2}]_e$.  
  Then $\eta$ is nondegenerate if and only if the following linear map
  \[
    H_g \to [H^\ast]_g = (H_{g^{-1}})^\ast \, ,
    \quad v_g \mapsto \eta(v_g, \bcdot)
  \]
  is an isomorphism for each $g \in G$.
  \label{lem:nondegenerate}
\end{lem}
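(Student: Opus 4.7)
The plan is to decompose both $\eta$ and the linear map in the statement along the $G$-grading, and then reduce to the standard fact that a bilinear form on a finite-dimensional vector space is nondegenerate if and only if the associated musical map into the dual is an isomorphism.

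First I would use the hypothesis $\eta \in [(H^\ast)^{\otimes 2}]_e$, which by the preceding remark is equivalent to saying $\eta(v_g, v_h) = 0$ whenever $gh \neq e$. Thus $\eta$ splits as a direct sum of pairings $\eta_g : H_g \otimes H_{g^{-1}} \to k$. Correspondingly, the musical map $\Psi_\eta : H \to H^\ast$ defined by $v \mapsto \eta(v, \bcdot)$ sends $H_g$ into $(H_{g^{-1}})^\ast$. Under the $G$-grading on $H^\ast$ fixed in Proposition~\ref{pro:dual}, this target is precisely $[H^\ast]_g$, so $\Psi_\eta$ is the direct sum, over $g \in G$, of the maps $\Psi_g : H_g \to [H^\ast]_g$ appearing in the lemma.

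Second, I would invoke the standard finite-dimensional criterion: $\eta$ is nondegenerate if and only if $\Psi_\eta$ is an isomorphism. Since $H = \bigoplus_g H_g$ and $H^\ast = \bigoplus_g [H^\ast]_g$ are finite direct sums and $\Psi_\eta = \bigoplus_g \Psi_g$ respects this decomposition, $\Psi_\eta$ is an isomorphism if and only if each $\Psi_g$ is. Chaining the two equivalences yields the statement.

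The only subtle point, and the main thing to keep straight, is matching the convention $(H^\ast)_m = (H_{m^{-1}})^\ast$ fixed in Proposition~\ref{pro:dual} with the vanishing condition $gh = e$ imposed by $\eta$ having $G$-degree $e$. Once this matching is in place, the argument is entirely formal and there is no real obstacle.
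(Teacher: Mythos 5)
Your proof is correct. The paper in fact states this lemma without proof (it is introduced merely as an observation), and your argument --- using the degree-$e$ condition, via the remark preceding the lemma, to split $\eta$ into pairings $H_g \otimes H_{g^{-1}} \to k$, identifying the musical map $\Psi_\eta$ with $\bigoplus_{g \in G} \Psi_g$ under the convention $(H^\ast)_m = (H_{m^{-1}})^\ast$ fixed in Proposition~\ref{pro:dual}, and applying the standard criterion blockwise --- is exactly the routine argument the paper leaves implicit. The one point worth stating explicitly is that the equivalence ``$\eta$ nondegenerate iff $\Psi_\eta$ is an isomorphism'' requires $H$ to be finite dimensional; this is harmless in context, since the lemma is applied to $G$-Frobenius algebras, whose underlying $G$-graded $G$-modules are finite dimensional by definition, so each $H_g$ is finite dimensional and the blockwise reduction is sound.
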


The following lemma plays an important role in the theory of $G$-Frobenius algebras.

\begin{lem}
 Let $\eta$ be 
 a $G$-invariant nondegenerate symmetric bilinear form that preserves the $G$-grading
 on $H$.
 Then $\igs \eta$ is nondegenerate on $H^G$.
 \label{lem:etag}
\end{lem}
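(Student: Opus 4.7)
The plan is to prove nondegeneracy by contradiction: assume $v \in H^G$ satisfies $\eta(v, w) = 0$ for every $w \in H^G$, and deduce $v = 0$. The strategy is to combine the nondegeneracy of $\eta$ on $H$ (packaged by Lemma~\ref{lem:nondegenerate}) with an averaging trick over $G$, which upgrades an arbitrary witness element of $H$ into one lying in $H^G$. The reason this works is that $v$ is already $G$-invariant, so the averaging does not destroy the pairing against $v$.

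First I would decompose $v = \sum_{g \in G} v_g$ into its $G$-homogeneous components, and assume for contradiction that some $v_g \neq 0$. Note that the individual $v_g$'s need not themselves lie in $H^G$; only the total sum is $G$-invariant. Applying Lemma~\ref{lem:nondegenerate} to this particular $v_g$ produces some $w' \in H_{g^{-1}}$ with $\eta(v_g, w') \neq 0$. Because $w' \in H_{g^{-1}}$ and $\eta$ preserves the $G$-grading, only the $G$-degree $g$ component of $v$ contributes to $\eta(v, w')$, so $\eta(v, w') = \eta(v_g, w') \neq 0$.

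The key step is to upgrade $w'$ to a $G$-invariant witness. Set
$$w := \frac{1}{|G|} \sum_{\gamma \in G} \rho(\gamma) w',$$
which is manifestly $G$-invariant, hence $w \in H^G$. Using $G$-invariance of $\eta$ one has $\eta(v, \rho(\gamma) w') = \eta(\rho(\gamma^{-1}) v, w')$, and then $G$-invariance of $v$ gives $\rho(\gamma^{-1}) v = v$. Summing over $\gamma$ therefore yields $\eta(v, w) = \eta(v, w') \neq 0$, contradicting the assumption that $v$ annihilates all of $H^G$ under $\eta$.

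There is no substantial obstacle here. The only subtle point worth flagging is that one cannot blindly invoke Lemma~\ref{lem:nondegenerate} inside $H^G$, since the $G$-homogeneous summands $v_g$ need not lie in $H^G$ and the relevant pairings are now between $H^G_{\overline{g}}$ and $H^G_{\overline{g^{-1}}}$. The averaging construction is precisely what bridges this gap: nondegeneracy in the ambient space $H$, together with the fact that $v$ is already fixed by $G$, is enough to detect $v$ by pairing with a single $G$-invariant element.
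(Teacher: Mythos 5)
Your proof is correct. Note first that the paper states Lemma~\ref{lem:etag} without proof --- it is recalled from the $G$-Frobenius algebra literature (cf.\ \cite{orbifolding}) --- so there is no in-paper argument to compare against; your argument supplies the standard one. Each step checks: for $v_g \neq 0$, Lemma~\ref{lem:nondegenerate} yields $w' \in H_{g^{-1}}$ with $\eta(v_g, w') \neq 0$; preservation of the $G$-grading collapses $\eta(v, w')$ to $\eta(v_g, w')$; and the average $w = \frac{1}{|G|}\sum_{\gamma \in G} \rho(\gamma) w'$ lies in $H^G$ and pairs with $v$ exactly as $w'$ does, by $G$-invariance of $\eta$ together with $\rho(\gamma^{-1})v = v$. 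The division by $|G|$ is licensed by the standing assumption that $k$ has characteristic zero. One remark: your detour through the homogeneous decomposition of $v$ is unnecessary. Plain nondegeneracy of $\eta$ on $H$ already produces some $u \in H$ with $\eta(v, u) \neq 0$, and your averaging computation applies verbatim to $u$: from $\eta(v, \rho(\gamma)u) = \eta(\rho(\gamma^{-1})v, u) = \eta(v, u)$ one gets $\eta(v, w) = \eta(v, u) \neq 0$ for the averaged $w \in H^G$. In particular, the hypothesis that $\eta$ preserves the $G$-grading is not actually needed for this conclusion --- only $G$-invariance, nondegeneracy, and the invertibility of $|G|$ in $k$ --- and Lemma~\ref{lem:nondegenerate} plays no essential role. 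Your version is not wrong, merely longer than it needs to be.
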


Our main definition in this section follows.
We denote the $G$-degrees of homogeneous elements as subscripts.

\begin{defi}
  A \emph{$G$-Frobenius algebra $((H,\rho), \eta, \cdot, 1)$} 
  is a unital associative algebra on a finite dimensional self-invariant
  $G$-graded $G$-module $(H,\rho)$ 
  with a $G$-invariant nondegenerate symmetric bilinear form  $\eta$
  that preserves the $G$-grading, and
  satisfying the following compatibility conditions:
  \begin{enumerate}
    \item \emph{$G$-equivariance of multiplication}. 
      $\rho(\gamma)v_1 \cdot \rho(\gamma)v_2 =\rho(\gamma) (v_1 \cdot v_2)$ 
      for any $\gamma \in G$ and
      $v_1, v_2 \in H$.
    \item \emph{$G$-graded multiplication}.
      $v_{m_1} \cdot v_{m_2} \in H_{m_1 m_2}$.
    \item \emph{Braided commutativity}.
      $v_{m_1} \cdot v_{m_2} = \rho(m_1^{-1}) v_{m_2} \cdot v_{m_1}$.
    \item \emph{Invariance of the metric}.
      $\eta(v_1 \cdot v_2, v_3) = \eta(v_1, v_2 \cdot v_3)$ for any $v_1, v_2, v_3 \in H$.
    \item \emph{$G$-invariant identity}.
      $\rho(\gamma) 1 = 1$.
     \end{enumerate}
\end{defi}

\begin{rem}
  The standard definition of a $G$-Frobenius algebra requires that 
  it satisfies the \emph{trace axiom}. We do not consider it in this paper.
See \cite[Definition 4.13]{jkk}.
\end{rem}

\begin{rem}
  \cite{orbifolding} shows that a $G$-Frobenius algebra
  can also be defined in terms of certain $G$-braided bi- and tri-linear forms.
  \label{rem:trilinear}
\end{rem}

\begin{rem}
  In \cite{dt, kp} the authors show that a $G$-Frobenius algebra is a monoid object 
in $\dcg$-$\md$, satisfying further conditions.
\label{rem:mod-DCG}
\end{rem}

\begin{rem}
 Note that the identity should be $G$-homogeneous with $G$-degree equal to the 
 identity in $G$.
\end{rem}

\begin{rem}
  When $G$ is trivial, we recover the notion of Frobenius algebra.
  In particular, $H_e$ of any $G$-Frobenius algebra $H$
  has the induced structure of a Frobenius algebra.
  Moreover, \cite[Proposition 2.1]{orbifolding} shows that 
  the restriction of the algebra structure and the bilinear form on 
  $H^G$ also yields a Frobenius algebra structure on it.
\end{rem}

\subsection{$G$-Frobenius Manifolds}

Let $(H, \rho)$ be a self-invariant $G$-graded
$G$-module of finite dimension, $\Phi$ the braiding as in Remark~\ref{rem:mod-DCG}, 
$\eta$ a $G$-invariant nondegenerate symmetric
bilinear form on it that preserves the $G$-grading.
Let $Y^n \in [\mathpzc{Br}^n H^\ast]_e$ for each $n \geq 3$ and 
set $Y := \sum Y^n \in \left[ \mathpzc{Br} \llbracket H^\ast \rrbracket \right]_e$.
Write $\eta_e := \ies \eta$, $\eta^G := \igs \eta$, $Y_e := \ies Y$, and
$Y^G := \igs Y$.

\begin{defi}
  $((H, \rho), \eta, Y)$ is a
  \emph{pre-$G$-Frobenius manifold} if 
  both $Y_e$ and $Y^G$ satisfies the WDVV-equations.
  Moreover, if there is a vector $1 \in H_e$ such that 
  $((H, \rho), \eta, Y^3, 1)$ is a $G$-Frobenius algebra,
  then $((H,\rho), \eta, Y, 1)$ is a \emph{$G$-Frobenius manifold}.
  \label{defi:gfm}
\end{defi}

\begin{rem}
  Here we are using Remark~\ref{rem:trilinear}.  
\end{rem}

\begin{rem}
  $(H_e, \eta_e, Y_e)$ and $(H^G, \eta^G, Y^G)$ become ordinary formal Frobenius manifolds.
  The nondegeneracies of $\eta_e$ and $\eta^G$ are guaranteed by Lemmas~\ref{lem:nondegenerate}
  and \ref{lem:etag}.
\end{rem}

\begin{rem}
Kaufmann envisioned such a structure as an ingredient for generalizing the results of
\cite{orbifolding} to the level of Frobenius manifolds.
I realized it by interpreting $G$-graded $G$-modules as $G$-braided spaces.
\end{rem}

\section{Correlation Functions of $G$-Cohomological Field Theory}
\label{sec:corrgcohft}

\subsection{Moduli Spaces}

We review the theory of moduli spaces of pointed admissible $G$-covers
from \cite{jkk}.
We start with (unpointed) admissible $G$-covers from \cite{acv} and \cite{jkk}.

Let $(C \to T, p_1, \dots, p_n)$ be a stable curve over $T$ of genus zero 
with $n$ marked points $p_1, \dots, p_n$.

\begin{defi}
  A finite morphism $\pi : E \to C$ is an \emph{admissible $G$-cover}
  if it satisfies the following conditions.
  \begin{enumerate}[(i)]
    \item $E/T$ is a nodal curve, possibly disconnected.
    \item Nodes of $E$ maps to nodes of $C$.
    \item $G$ acts on $E$, respecting the fibers.
    \item $\pi$ is a principal $G$-bundle outside of the special points (nodes 
      or marked points).
    \item Over the nodes of $C$, the structure of the maps $E \to C \to T$
      is locally analytically isomorphic to the following:
      \[ \spec A[z,w]/(zw - t) \to \spec A[x,y]/(xy - t^r) \to \spec A \, , \] where $t \in A$, $x = z^r$, and $y = w^r$ for some positive integer $r$.
   \item Over the marked points of $C$, the structure of the maps
     $E \to C \to T$ is locally analytically isomorphic to the following:
     \[ \spec A[z] \to \spec A[x] \to \spec A \, ,
     \]
     where $x = z^s$ for some positive integer $s$.
   \item At each node $q$ of $E$, the eigenvalues of the action of the stabilizer $G_q$
     on the two tangent spaces should be the multiplicative inverses of each other.
    \end{enumerate}
\end{defi}

\begin{rem}
  \cite{acv} shows that the stack of admissible $G$-covers is isomorphic 
  to the stack $\overline{M}_{0,n}(BG)$ of balanced twisted stable maps
  into the classifying stack of $G$, and is a smooth Deligne-Mumford stack,
  flat, proper, and quasi-finite over $\overline{M}_{0,n}$. 
\end{rem}

As in the case of principal $G$-bundles, admissible $G$-covers are
classified by their holonomies up to the adjoint action of $G$.
Let $C_{\text{gen}}$ and $E_{\text{gen}}$ be the points that are
neither nodes nor marked points in $C$ and $E$. 
Choose a point $p_0 \in C_{\text{gen}}$.

\begin{pro}
  If $C_{\text{gen}}$ is connected, then 
  there is a one to one correspondence between the set of homomorphisms 
 $\pi_1(C_\text{gen},p_0) \to G^n$ and the set of isomorphism classes 
 of admissible $G$-covers over $C$ together
 with a point $\tilde{p}_0$ in the fiber of $p_0$.
 \label{pro:holonomy}
 \end{pro}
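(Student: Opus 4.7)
The plan is to prove this as a version of the Riemann existence theorem adapted to the admissible $G$-cover setting. The central observation is that condition (iv) makes $\pi$ a principal $G$-bundle over $C_{\text{gen}}$, and on a connected open with a chosen basepoint upstairs, principal $G$-bundles are classified by monodromy homomorphisms from the fundamental group. Thus the entire content of the proposition is that this classification extends uniquely from $C_{\text{gen}}$ to the nodal curve $C$ via conditions (v), (vi), (vii).

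First I would construct the map in one direction. Given an admissible $G$-cover $\pi \colon E \to C$ and a point $\tilde p_0$ in the fiber of $p_0$, restrict to $\pi^{-1}(C_{\text{gen}}) \to C_{\text{gen}}$; by (iv) this is a principal $G$-bundle, and the unique path-lifting property applied to loops based at $p_0$ produces a holonomy homomorphism $\rho \colon \pi_1(C_{\text{gen}}, p_0) \to G$. Standard covering space theory (applied connected component by connected component of $E_{\text{gen}}$, which is governed by the image of $\rho$) shows that the pair $(\pi \vert_{C_{\text{gen}}}, \tilde p_0)$ is determined up to unique isomorphism by $\rho$.

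For the other direction, I would reverse the procedure. Given $\rho$, form the associated bundle $\widetilde{C_{\text{gen}}} \times_{\pi_1, \rho} G \to C_{\text{gen}}$ from the universal cover and choose the basepoint corresponding to the identity. The serious work is then extending this étale cover to an admissible $G$-cover over all of $C$. Near a marked point $p_i$, the local fundamental group is generated by a small loop, whose image under $\rho$ is some element $m_i \in G$ of order $s_i$; the extension is forced to be $\spec A[z] \to \spec A[x]$ with $x = z^{s_i}$ as in (vi), and $G_{p_i} = \langle m_i \rangle$ acts by the standard character on the tangent space. Near a node $q$ mapping to a node of $C$, the local picture $zw = t$ over $xy = t^r$ is similarly pinned down by the monodromy around each branch, and condition (vii) on reciprocal eigenvalues is automatic because the two branch monodromies are inverse to each other in $G$.

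The main obstacle is precisely this extension step. To make it rigorous, I would work locally analytically at each special point, identify the local fundamental group as $\widehat{\mathbb Z}$ (for a marked point) or as the amalgam of two copies of $\widehat{\mathbb Z}$ (for a node) inside $\pi_1(C_{\text{gen}}, p_0)$ up to conjugation, and check that the prescribed local model satisfies (v)--(vii) with the correct $r$ or $s$ read off from the order of the monodromy. Uniqueness of the extension then reduces to the fact that a principal $G$-bundle on the punctured analytic disk extends to an admissible cover of the disk in exactly one way determined by its monodromy element, which is a routine local calculation. Concatenating this local extension with the bundle already constructed on $C_{\text{gen}}$ yields the admissible $G$-cover corresponding to $\rho$, and the two constructions are visibly inverse to each other, completing the bijection.
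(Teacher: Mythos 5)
You are not being compared against an in-paper argument here: the paper states this proposition without proof, quoting it from Abramovich--Corti--Vistoli and Jarvis--Kaufmann--Kimura, and your overall strategy---classify the principal $G$-bundle over $C_{\text{gen}}$ by its monodromy homomorphism via covering space theory, then extend uniquely across the special points using the local models (v)--(vii)---is exactly the standard argument behind the citation. Note also that the target of the homomorphisms should be $G$ rather than $G^n$ (a typo in the statement as printed), which you silently and correctly assume throughout.

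There is, however, a genuine error in your treatment of the nodes, and it is worth seeing why it happens to be harmless. In the genus-zero setting of the paper, $C$ is a tree of rational curves, so removing a node disconnects $C$; the hypothesis that $C_{\text{gen}}$ is connected therefore forces $C$ to be smooth, and the only extension needed is across the marked points, where your punctured-disk argument is correct and the extension is indeed unique. Had nodes actually been present, both of your claims about them would fail. First, for an arbitrary homomorphism $\rho$ the images of the loops around the two branches of a node need not be inverse to each other: in an admissible cover the balanced condition (vii) \emph{forces} this relation, so it is a nontrivial constraint on $\rho$, not an automatic consequence, and some homomorphisms would admit no admissible extension at all. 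Second, even when the two monodromies do match, the extension across a node is not unique, since one must additionally choose a $G$-equivariant identification of the two fibers over the node; for instance, with trivial monodromy at the node the possible gluings form a torsor over $G$ and yield pairwise non-isomorphic pointed covers whose restrictions to $C_{\text{gen}}$ coincide. (This extra gluing datum is precisely why the gluing morphisms later in the paper are indexed by the pair $\mu$, $\mu^{-1}$.) So your proof is repaired by one observation you omitted---connectedness of $C_{\text{gen}}$ rules out nodes in genus zero---after which the node case should simply be deleted rather than argued.
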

 
 Let $\overline{G}$ be the set of conjugacy classes of $G$, 
 and $\overline{\boldsymbol \gamma} \in \overline{G}^n$ be
 the $n$-tuple of conjugacy classes determined by $\boldsymbol \gamma \in G^n$.
 \cite{jk} shows that we have the following decomposition:
\[ \overline{M}_{0,n}(BG) 
  = \coprod_{\overline{\boldsymbol \gamma} \in \overline{G}^n} 
  \overline{M}_{0,n}(BG;\overline{\boldsymbol \gamma}) \, ,
\]
where $\overline{M}_{0,n}(BG;\overline{\boldsymbol \gamma})$ denote
the substack with holonomies that can be labeled by $\overline{\boldsymbol \gamma}$.
Note that $\overline{M}_{0,n}(BG;\overline{\boldsymbol \gamma})$ 
consists of multiple components.

We now come to the pointed version of the previous moduli space.

\begin{defi}
  For a $g=0$ stable curve with $n \geq 3$ marked points 
  $(C \to T , p_1, \dots, p_n)$, an \emph{$n$-pointed admissible $G$-cover} is
  an admissible $G$-cover $\pi : E \to C$ with $n$ marked points 
  $\tilde{p_i} \in E$ such that $\pi(\tilde{p_i}) = p_i$.
  The morphisms in the \emph{stack of admissible $G$-covers
    $\overline{M}^G_{0,n}$} are the $G$-equivariant fibered diagrams
    preserving the points $\tilde{p}_i$.
\end{defi}

\begin{rem}
We have forgetful morphisms
\[
  \begin{CD}
 \mgzb{n} @>\stt>> \mzb{n}(BG) @>\sth>> \mzb{n} \, .
  \end{CD}
\]
Let $\st := \sth \circ \stt$.
  \cite{jkk} shows that $\overline{M}^G_{0,n}$ are smooth Deligne-Mumford stacks, 
  flat, proper, and quasi-finite over $\overline{M}_{0,n}$.
  \end{rem}
  
Given a pointed admissible $G$-cover, 
let $\boldsymbol m := (m_1, \dots, m_n)$ be the monodromy around
the marked points on the cover, and $\overline{M}^G_{0,n}(\boldsymbol m)$
denote the substack with the monodromy $\boldsymbol m$.
Then we have the decomposition
\begin{align*}
  \overline{M}^G_{0,n} = \coprod_{\boldsymbol m \in G^n}
  \overline{M}^G_{0,n}(\boldsymbol{m}) \, .
\end{align*}

Given a pointed admissible $G$-cover, the structure of the fiber
over a marked point on the underlying curve with monodromy $m$
is the same as $G/<m>$ as a $G$-set.
This gives rise to the $G^n$-action on $\mgzb{n}$.
Also, we have the $S_n$-action on it given by switching the labels
of the marked points on underlying curves.

We now describe two morphisms  that are relevant in $g = 0$: the 
\emph{forgetting tails morphism} $\tau$ and the \emph{gluing morphism} $gl$.

$\tau$ is defined when the monodromy of the last point is trivial. Suppose
$\bm \in G^n$. Then
\[
  \tau: \mgzb{n+1}(\bm, e) \to \mgzb{n}(\bm)
\]
is given simply by forgetting the last marks on the underlying stable curve
and the point over it. If we get an unstable curve as the result, 
we stabilize it and \cite{jkk} shows that we have a way to 
define a suitable pointed admissible $G$-cover on the stabilized curve.

We can glue two $G$-covers at marked points when their monodromies are inverse 
to each other. Let $\mu \in G$ and $n = n_1 + n_2$. Then
\[
  gl: \mgzb{n_1 + 1}(\bm , \mu) \times \mgzb{n_2 + 1}(\mu^{-1}, \bm') \to 
  \mgzb{n}(\bm, \bm')
\]
is given by gluing the underlying curve at the last marked point of the first curve
and the first marked point of the second curve, and gluing the $G$-cover equivariantly
at the points over the marked points just glued. The result is already a
pointed admissible $G$-cover.
More generally, the gluing morphism can be defined
for any partition $I \sqcup J$ of $\{ 1, \dots, n\}$  as
\[
gl: \mgzb{n_1 + 1}(\bm_I) \times \mgzb{n_2 + 1}(\bm_J) \to 
  \mgzb{n}(\bm)
\]
making use of the symmetric group action.
Here $\bm_I$ and $\bm_J$ are obtained by adjoining $\mu$ and $\mu^{-1}$ at some position
to the corresponding components of $\bm$.

Next, we define distinguished components of $\mgzb{n}$ following \cite{jkk}.
Suppose that we have $n$ marked points at $ P := \left\{ e^{\frac{2k \pi i}{n}} 
: 1 \leq k \leq n \right\}$ 
on $\mathbb{P}^1 = \C \cup \left\{ \infty \right\}$.
We label the points from $1$ to $n$ counterclockwise starting from the point $1 \in \C$.
Draw line segments from $0$ to each marked point.
Then this determines a set of generators of the fundamental group
of $\mathbb{P}^1 - P$ at $0$.
Let $\bm \in G^n$ with $\prod_i m_i = e$.
Then $\bm$ determines a holonomy, and
Proposition~\ref{pro:holonomy} gives us an admissible $G$-cover and
a point $p$ over $0$.
Parallel transporting $p$ along the line segments above yields
$n$ points over the marked points on the underlying $\mathbb{P}^1$.
Note that the monodromy at these points are also given by $\bm$.
We denote $[\bm]$ the component of $\mgzb{n}(\bm)$ that contains the point
representing this pointed admissible $G$-cover.

\begin{rem}
The homology classes of $\mgzb{n}$ in this paper denote
those of its coarse moduli space. Namely, we do not divide by
the orders of automorphism groups.
\label{rem:homology}
\end{rem}

\subsection{$G$-Cohomological Field Theory}
\label{ssec:gcohft}

In this section, we recall the notion of $G$-cohomological
Field Theory($G$-CohFT) from \cite{jkk}, and review how
one obtains an ordinary cohomological field theory(CohFT)
and a $G$-Frobenius algebra
out of it.
We consider only $g=0$ case.

\begin{defi}
  A genus zero $G$-CohFT is 
  a quadruple $((H,\rho), \eta, \{\Lambda_n\},1)$
  where
  \begin{enumerate}[(i)]
    \item $(H,\rho)$ is a $G$-graded $G$-module,
    \item $\Lambda_n \in \bigoplus_{\boldsymbol m} 
      H^\bullet (\overline{M}^G_n(\boldsymbol m)) \otimes H^\ast_{\boldsymbol m}$
      and $G^n \rtimes S_n$-invariant,
    \item $1 \neq 0 \in H_e$ such that
      \begin{enumerate}[(a)]
	\item $1$ is $G$-invariant,
	\item $\Lambda_{n+1} (v_{\boldsymbol m}, 1) = \tau^\ast \Lambda_n(v_{\boldsymbol m})$
	  under the forgetting tails morphism $\tau$,
      \end{enumerate}
    \item $\eta$ is a nondegenerate symmetric bilinear form on $H$ such that
      \begin{align}
		\eta(v_{m_1}, v_{m_2}) 
  := \int_{[m_1, m_2, 1]} \Lambda_3(v_{m_1}, v_{m_2}, 1) \, ,
  \label{eq:id}
      \end{align}
    \item for any basis $\{e_\alpha\}$ of $H$,
      \begin{align*}
	gl^\ast \Lambda_n(v_{\boldsymbol m}) 
	= \sum_{\alpha , \beta} \Lambda_{n_1 + 1}(v_{\boldsymbol m_I}, e_\alpha)
	\eta^{\alpha \beta} \Lambda_{n_2 + 1} (e_\beta, v_{\boldsymbol m_J}) 	
      \end{align*}
      under the gluing morphism $gl$,
    for any partition $I \sqcup J$ of $\{1, \dots, n\}$.
  \end{enumerate}
\end{defi}

\begin{rem}
  The notion of CohFT as defined in \cite{km} does not involves the axioms
  regarding $1$. Hence
  we recover the notion of the $g=0$ part of the CohFT
  by considering trivial group $G$ and removing the axiom (iii) above and
  Equation~\eqref{eq:id}.
\end{rem}

We now review how to obtain an ordinary CohFT from a $G$-CohFT. 
This is a two-step process that involves the three moduli spaces
$\mgzb{n}$, $\mzb{n}(BG)$ and $\mzb{n}$.

The first step involves the forgetful map
\[
  \stt[\overline{\bm}] : \mgzb{n}(\overline{\bm}) \to \mzb{n}(BG; \overline{\bm}) \, ,
\]
where
\[
  \mgzb{n}(\overline{\bm}) := \coprod_{\bm \in \overline{\bm}} \mgzb{n}(\bm) \, .
\]
We have the following proposition.

\begin{pro}
  There are unique classes 
 \[ 
   \widehat{\Lambda}_n \in \bigoplus_{\overline{\bm} \in \overline{G}^n} 
   H^\bullet(\mzb{n}(BG;\overline{\bm})) \otimes (H^G_{\overline \bm})^\ast 
\]
such that 
\[
\stt[\bar{\bm}]^\ast \widehat{\Lambda}_n(v_{\bar{\bm}}) = \Lambda_n(v_{\bar{\bm}})
\]
for all $v_{\bar{\bm}} \in H^G_{\bar{\bm}}$.
\end{pro}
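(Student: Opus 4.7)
The plan is to realize $\mzb{n}(BG;\overline{\bm})$ as a quotient of $\mgzb{n}(\overline{\bm})$ by the natural $G^n$-action, so that $\stt[\overline{\bm}]^*$ becomes an isomorphism onto the $G^n$-invariant cohomology, and then to define $\widehat{\Lambda}_n$ as the unique descent of $\Lambda_n$ along this quotient.

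First I would examine the geometry of $\stt[\overline{\bm}]:\mgzb{n}(\overline{\bm})\to\mzb{n}(BG;\overline{\bm})$. The morphism forgets the distinguished pre-images of the marked points on the $G$-cover, and the natural $G^n$-action on $\mgzb{n}$ (modifying each chosen pre-image by an element of $G$, which conjugates the corresponding monodromy) acts fiberwise transitively, permuting the components $\mgzb{n}(\bm)$ for $\bm\in\overline{\bm}$. Passing to the cohomology of coarse moduli as in Remark~\ref{rem:homology}, the pullback $\stt[\overline{\bm}]^*$ identifies $H^\bullet(\mzb{n}(BG;\overline{\bm}))$ with the $G^n$-invariants of $\bigoplus_{\bm\in\overline{\bm}} H^\bullet(\mgzb{n}(\bm))$. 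In particular $\stt[\overline{\bm}]^*$ is injective, which yields the uniqueness of $\widehat{\Lambda}_n$ once existence is established.

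For existence, I need to show that for each $v_{\overline{\bm}}\in H^G_{\overline{\bm}}$, writing $v_{\overline{\bm}}=\sum_{\bm\in\overline{\bm}} v_{\bm}$ with $v_{\bm}\in H_{\bm}$, the class $\Lambda_n(v_{\overline{\bm}})=\sum_{\bm\in\overline{\bm}}\Lambda_n(v_{\bm})$ is $G^n$-invariant in $\bigoplus_{\bm\in\overline{\bm}} H^\bullet(\mgzb{n}(\bm))$. Given $\gamma\in G^n$, the $G^n\rtimes S_n$-equivariance of $\Lambda_n$ converts the action of $\gamma^*$ on cohomology into the action of $\gamma^{-1}$ on $H^*_{\bm}$, so that $\gamma^*\Lambda_n(v_{\bm})=\Lambda_n(\gamma^{-1}\cdot v_{\bm})$, which lies in $H^\bullet(\mgzb{n}(\gamma^{-1}\bm\gamma))$. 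Summing over $\bm\in\overline{\bm}$ and using that the componentwise $G^n$-action permutes the summands within the conjugacy class, together with the diagonal $G$-invariance of $v_{\overline{\bm}}$, I obtain $\gamma^*\Lambda_n(v_{\overline{\bm}})=\Lambda_n(v_{\overline{\bm}})$. Hence the class lies in the image of $\stt[\overline{\bm}]^*$, and $\widehat{\Lambda}_n(v_{\overline{\bm}})$ is defined as its unique preimage; linear extension in $v$ produces the sought-after class.

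The main obstacle will be the careful bookkeeping of how the various $G^n$-actions interact: on the indexing set $G^n$, on the moduli stacks $\mgzb{n}(\bm)$, on the factors of $H^{\otimes n}$, and on the graded pieces $H^G_{\overline{\bm}}$. The delicate point is that $v_{\overline{\bm}}$ is only diagonally $G$-invariant, not componentwise $G^n$-invariant within a single $H_{\bm}$, so the $G^n$-invariance of $\Lambda_n(v_{\overline{\bm}})$ emerges only after summing over the full conjugacy class $\overline{\bm}$, with the $G^n$-action on moduli conspiring with the $G^n$-action on $H^{\otimes n}$ to permute the summands consistently. Making this compatibility rigorous at the stack level (rather than coarsely) could introduce automorphism-group factors from stabilizers $\langle m_i\rangle$, but Remark~\ref{rem:homology} lets us work with coarse cohomology and sidestep such issues.
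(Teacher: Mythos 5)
The paper itself offers no proof of this proposition --- it is reviewed from \cite{jkk} --- and your argument is essentially the one used there: $\stt[\bar{\bm}]$ exhibits $\mzb{n}(BG;\bar{\bm})$ as the quotient of $\mgzb{n}(\bar{\bm})$ by the $G^n$-action changing the points $\tilde{p}_i$ (transitive on fibers since the fiber over $p_i$ is $G/\langle m_i \rangle$), so in characteristic zero $\stt[\bar{\bm}]^\ast$ is injective with image the $G^n$-invariant cohomology, and the $G^n \rtimes S_n$-equivariance of $\Lambda_n$ makes $\Lambda_n(v_{\bar{\bm}})$ invariant, giving existence and uniqueness at once. Your proposal is correct; note only that since every tensor factor of $v_{\bar{\bm}}$ lies in $H^G$, the vector is invariant under the full componentwise $G^n$-action, not merely the diagonal one, with the individual homogeneous components $v_{\bm}$ being permuted within the conjugacy class exactly as your computation requires.
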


Once we obtain $\widehat{\Lambda}_n$, we define $\bar{\Lambda}_n$ in the following way.

\begin{defi}
  \[
    \bar{\Lambda}_n := 
    \sth[\ast] \widehat{\Lambda}_n \in H^\bullet(\mzb{n}) \otimes \hgs^{\otimes n} \, .
  \]
\end{defi}

Let $\eta^G$ be the restriction of $\eta$ to $H^G$. Then we get the CohFT we wanted
as follows.

\begin{thm}
  $(H^G, \eta^G, \{\overline{\Lambda}_n\})$ is a CohFT.
\end{thm}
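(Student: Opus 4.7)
The plan is to verify each defining axiom of an ordinary CohFT for the triple $(H^G, \eta^G, \{\bar\Lambda_n\})$ by transporting the corresponding $G$-CohFT axioms for $(H,\eta,\{\Lambda_n\},1)$ along the two forgetful morphisms $\stt \colon \mgzb{n} \to \mzb{n}(BG)$ and $\sth \colon \mzb{n}(BG) \to \mzb{n}$. Nondegeneracy and symmetry of $\eta^G$ are immediate from Lemma~\ref{lem:etag} and the symmetry of $\eta$, so no additional work is required there. Next I would check that each $\bar\Lambda_n$ is $S_n$-invariant: since $\stt$ restricted to $\mgzb{n}(\bm)$ kills the $G^n$-action on the monodromy label, the $G^n\rtimes S_n$-invariance of $\Lambda_n$ forces $\widehat\Lambda_n$ to be $S_n$-invariant, and this property is preserved by the proper pushforward $\sth_\ast$.

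Then I would handle the forgetting-tails axiom $\bar\Lambda_{n+1}(v_{\bm},1)=\tau^\ast \bar\Lambda_n(v_{\bm})$. Because $1 \in H_e$ has trivial monodromy, the forgotten tail lies over the component $\mgzb{n+1}(\bm,e)$, on which $\tau$ at the $G$-level sits in a commutative square with $\tau$ at the $\mzb{}$-level via $\stt$ and $\sth$. Pulling back through this square the $G$-CohFT identity axiom (axiom (iii)(b) of a $G$-CohFT), and applying flat base change for $\sth$ (which is flat and proper over $\mzb{n}$), produces the required identity axiom for $\bar\Lambda_n$.

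The main work, and the main obstacle, is the gluing axiom. The strategy is to set up the commutative diagram relating the three gluing morphisms on $\mgzb{n}$, $\mzb{n}(BG)$ and $\mzb{n}$, sum the $G$-CohFT gluing identity over the matched pair of internal monodromies $\mu,\mu^{-1}$, and restrict test vectors to $H^G$. By Lemma~\ref{lem:nondegenerate}, $\eta$ pairs $H_g$ with $H_{g^{-1}}$ nondegenerately for each $g$; after summing $\mu$ over $G$ the diagonal of these pairings produces the inverse tensor of $\eta$ on $H$, and restricting the test slots to $H^G$ collapses this to the inverse of $\eta^G$ on $H^G$ by Lemma~\ref{lem:etag}. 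The delicate part is tracking the orbifold multiplicities that arise from the $G^n$-quotient implicit in $\stt$ and from the proper pushforward $\sth_\ast$, so that the coefficient in front of the glued term on $\mzb{n}$ is exactly $\eta^{G,\alpha\beta}$ for a dual basis $\{e_\alpha\}$ of $H^G$. Remark~\ref{rem:homology} on taking homology classes of the coarse spaces (i.e.\ not dividing by orders of automorphism groups) is precisely what makes these numerical factors cancel, so that the unadorned CohFT gluing formula holds for $\bar\Lambda_n$.
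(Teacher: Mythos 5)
The paper does not actually prove this theorem: it is imported wholesale from \cite{jkk} (the quotienting construction for $G$-CohFTs), with the remark immediately following it handling the one point of divergence, namely that \cite{jkk} prove the statement for $\bar{\eta} := (1/|G|)\eta^G$ rather than $\eta^G$, and that the discrepancy is absorbed only because this paper's notion of CohFT drops the unit axioms. So your proposal is in effect a reconstruction of the \cite{jkk} argument rather than an alternative to anything in this paper. The skeleton is right: $S_n$-invariance of $\bar{\Lambda}_n$ does descend from the $G^n \rtimes S_n$-invariance of $\Lambda_n$ through $\stt$ and $\sth[\ast]$, and nondegeneracy of $\eta^G$ is Lemma~\ref{lem:etag} (note you should also invoke Remark~\ref{rem:gfa} to know $\eta$ is $G$-invariant and grading-preserving before Lemma~\ref{lem:etag} applies). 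Your forgetting-tails verification is superfluous here, since axiom (iii) is exactly what is removed from the CohFT notion in this paper — and in any case ``flat base change'' is not quite the mechanism; one needs compatibility of $\sth[\ast]$ with $\tau^\ast$ via a Cartesian square.

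The genuine gap is in the gluing step, which is where the whole content of the theorem sits. First, ``restricting the test slots to $H^G$'' does not by itself collapse the node sum from a basis of $H$ with $\eta^{\alpha\beta}$ to a basis of $H^G$ with $(\eta^G)^{\alpha\beta}$: the node insertion still ranges over all of $H$, including the non-invariant isotypic part $W$ in the Maschke decomposition $H = H^G \oplus W$. You need two additional inputs: that $H^G$ and $W$ are $\eta$-orthogonal (so the inverse of $\eta$ restricted to invariants is the inverse of $\eta^G$), and that the pushed-forward classes are $G$-invariant in the node slot, so that the $W$-contributions vanish after $G$-averaging. Second, your appeal to Remark~\ref{rem:homology} to ``make the numerical factors cancel'' asserts precisely the delicate constant rather than establishing it: in the conventions of \cite{jkk} the factors do \emph{not} cancel — gluing holds there with $\bar{\eta}^{\alpha\beta} = |G|(\eta^G)^{\alpha\beta}$, a factor of $|G|$ per node — and a metric rescaling by $\lambda$ breaks the gluing identity by $\lambda^{-1}$ unless one simultaneously rescales all the classes uniformly, which is legitimate only once the unit axioms are discarded. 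Without explicitly computing the degrees of $\stt$ and $\sth$ over each component (the $p(\kappa)$-type multiplicities that Section~\ref{sec:GcohFT} has to track for exactly this reason), your sketch leaves the decisive normalization — the difference between the paper's statement with $\eta^G$ and the \cite{jkk} statement with $\bar{\eta}$ — unverified.
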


\begin{rem}
  In \cite{jkk}, the authors use $\bar{\eta} : = (1/|G|)\eta^G$ instead of
  $\eta^G$. This is because they include $1$ as a data for the definition of
  a CohFT. Once we remove the conditions about $1$, we can normalize $\eta^G$ 
  as needed, and we still get a CohFT.
\end{rem}

Next, we consider the $G$-Frobenius algebra 
that is contained in any $G$-CohFT $((H,\rho),\eta, \Lambda_n, 1)$.
Let $g$, $h$, and $k$ be in $G$ such that $g h k = e$. 
Fix a basis $\left\{ e_{\alpha} \right\}$ for $H_k$ and 
$\left\{ f_{\beta} \right\}$ for $H_{k^{-1}}$. 
Let $v \in H_{g}$ and $w \in H_h$. 
Define 
\begin{align}
  v \cdot w := \int_{\left[ g, h, k \right]} \Lambda_3(v, w, e_\alpha) 
  \eta^{\alpha \beta} f_\beta \, .
  \label{eq:gfa}
\end{align}

\begin{thm}
  $((H,\rho), \eta, \cdot, 1)$ is a $G$-Frobenius algebra.
  \label{thm:gfrob}
\end{thm}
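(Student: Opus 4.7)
The plan is to verify each axiom of a $G$-Frobenius algebra by pulling the desired identity back to a geometric statement on some $\mgzb{n}$ for small $n$, and then invoking the corresponding $G$-CohFT axiom (invariance under $G^n \rtimes S_n$, gluing, or forgetting tails). The easy book-keeping goes first: well-definedness of $\cdot$ (independence from the chosen bases) is formal from nondegeneracy of $\eta$; the $G$-graded multiplication axiom is immediate because the defining integral in Equation~\eqref{eq:gfa} is supported on the component $[g, h, (gh)^{-1}]$, forcing $f_\beta \in H_{gh}$; and preservation of the $G$-grading by $\eta$ follows because $[m_1, m_2, e]$ is empty unless $m_1 m_2 = e$. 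Symmetry and $G$-invariance of $\eta$, as well as self-invariance of $(H,\rho)$, are all consequences of the $G^n \rtimes S_n$-invariance of $\Lambda_3$ combined with Equation~\eqref{eq:id}; nondegeneracy is postulated in axiom (iv).

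For the unit property $1 \cdot v = v$, I would use $S_3$-invariance of $\Lambda_3$ to move $1$ to the last slot and apply Equation~\eqref{eq:id} directly, obtaining $\eta(v, e_\alpha) \eta^{\alpha\beta} f_\beta = v$. $G$-equivariance of the product follows by applying the diagonal $G$-action (an element of $G^n \rtimes S_n$) inside the integral in Equation~\eqref{eq:gfa} and using $G$-invariance of $\eta^{\alpha\beta}$. Braided commutativity is the first genuinely $G$-flavored point: the transposition $(1,2) \in S_3$ acts on $\mgzb{3}$ by relabeling, which on the monodromy tuple $(m_1, m_2, m_3)$ produces the braided tuple $(m_1 m_2 m_1^{-1}, m_1, m_3)$ as in Section~\ref{ssec:groupoidgn}. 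The $G^n \rtimes S_n$-invariance of $\Lambda_3$ then gives $\Lambda_3(v_{m_1}, v_{m_2}, e_\alpha) = \Lambda_3(\rho(m_1^{-1}) v_{m_2}, v_{m_1}, e_\alpha)$ after pullback, and contracting with $\eta^{\alpha\beta} f_\beta$ yields the braided-commutativity axiom.

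The remaining two axioms, invariance of the metric and associativity, both come from the gluing axiom (v) applied on $\mgzb{4}$. For invariance, I would expand $\eta(v \cdot w, z)$ and $\eta(v, w \cdot z)$ using Equation~\eqref{eq:id}, recognize each side as the pullback of $\Lambda_4$ along a boundary gluing morphism corresponding to one of the two partitions $\{1,2\}|\{3,4\}$ and $\{1\}|\{2,3,4\}$, and conclude by integrating over the same component of $\mgzb{4}$. Associativity is proved analogously, writing $(u \cdot v) \cdot w$ and $u \cdot (v \cdot w)$ as two gluings of $\Lambda_4$ along the two boundary strata of $\mgzb{4}(g_1,g_2,g_3,g_4)$ and comparing them in $H^\bullet(\mzb{4})$, which is the WDVV-type step. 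The main obstacle, and where all the subtlety lies, is the bookkeeping of monodromies at the internal node: the sum over the dual basis in axiom (v) requires matching the monodromy label $\mu$ and $\mu^{-1}$ at the glued point across the two gluings, and one must check that the relevant components $[\bm]$ fit together consistently. Once this is handled — essentially as done in \cite{jkk} — associativity and metric invariance fall out at once.
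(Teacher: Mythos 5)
The first thing to note is that the paper contains no proof of Theorem~\ref{thm:gfrob} to compare against: Section~\ref{ssec:gcohft} is explicitly a review, and this theorem is imported from \cite{jkk}, with only Remark~\ref{rem:homology} added to reconcile conventions on homology classes. So your proposal is really being measured against the Jarvis--Kaufmann--Kimura argument, and in outline it reconstructs that argument correctly: support of \eqref{eq:gfa} for the graded multiplication, the unit via \eqref{eq:id} and $S_3$-invariance, braided commutativity from relabeling distinguished components, and associativity from the gluing axiom plus a homology comparison of boundary strata over $\mzb{4}$, with the monodromy bookkeeping at the node correctly identified as the delicate point.

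Two concrete issues, though. First, the partition $\{1\}|\{2,3,4\}$ you invoke for metric invariance is not an allowed gluing: the gluing morphism maps $\mgzb{n_1+1} \times \mgzb{n_2+1} \to \mgzb{4}$ and requires $n_i + 1 \geq 3$ on each factor, so for $n = 4$ only the $2|2$ partitions exist; for $\eta(v, w \cdot z)$ you want $\{2,3\}|\{1,4\}$ with the unit in the fourth slot. In fact $\Lambda_4$ is unnecessary here: contracting the dual basis inside \eqref{eq:gfa} gives $\eta(v \cdot w, z) = \int_{[g,h,k]} \Lambda_3(v,w,z)$ for $z \in H_k$, and likewise $\eta(v, w \cdot z) = \int_{[h,k,g]} \Lambda_3(w,z,v)$, so metric invariance follows from cyclic $S_3$-invariance alone, reserving the gluing axiom for associativity where it is genuinely needed. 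Second, your statement that the transposition $(1,2)$ ``produces the braided tuple'' conflates two different things: the $S_n$-action on $\mgzb{3}$ sends the substack of monodromy $(m_1, m_2, m_3)$ to that of $(m_2, m_1, m_3)$ by a plain swap; the braided tuple $(m_1 m_2 m_1^{-1}, m_1, m_3)$ enters only through the nontrivial geometric lemma (proved in \cite{jkk} via the explicit star-shaped generators defining $[\bm]$) that the relabeled \emph{distinguished component} $(1,2) \cdot [m_1, m_2, m_3]$ coincides with the distinguished component of the braided tuple. Treating this as the definition of the action hides the one lemma the axiom rests on; the same gap affects your treatment of self-invariance, which is not a formal consequence of $G^n \rtimes S_n$-invariance of $\Lambda_3$ but additionally requires that the single-slot action of $m_1$ at a marked point of monodromy $m_1$ fixes the distinguished component (the marked point's coset in $G/\langle m_1 \rangle$ is fixed by $m_1$).
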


\begin{rem}
  In particular, $\rho$ is self-invariant, and $\eta$ is $G$-invariant and 
  preserves the $G$-grading.
  \label{rem:gfa}
\end{rem}

\begin{rem}
  Due to our convention in Remark~\ref{rem:homology} on the homology classes in $\mgzb{n}$,
  the homology class that is used for the definition of $\cdot$
  in \cite{jkk} is the same class as we use here.
\end{rem}

 \subsection{Correlation Functions}
 \label{sec:GcohFT}

 In this section, we define two types of correlation functions that we can get
 out of a $G$-CohFT $((H, \rho), \eta, \{\Lambda_n\}, 1)$, 
 a symmetric one and a braided one, and prove that they coincide on $H^G$.

Given $\bm = (m_1, \dots, m_n) \in G^n$, 
 let $\tau$ be a component of $\mzb{n}(BG; \overline{\bm})$.
 Then $\stt[\bar{\bm}] ^{-1}(\tau)$ is the union of the components of 
 $\mgzb{n}(\overline{\bm})$ that are mapped to
 $\tau$. Given a component $\kappa$ of $\mgzb{n}(\overline{\bm})$,
 let $\tau(\kappa)$ denote the component of $\mzb{n}(BG;\overline{\bm})$ 
 that contains the image of $\kappa$ under $\stt[\bar{\bm}]$.
 Set $u(\kappa) := \stt[\bar{\bm}]^{-1}(\tau(\kappa))$ and define
 $p(\kappa)$ to be the degree of the forgetful map
 \[
   \stt[\bar{\bm}] |_{u(\kappa)} : u(\kappa) \to \tau(\kappa) \, .
\]
In particular, we write $p(\bm) := p([\bm])$.
Also, since $\bg : [\bm] \to \bg[\bm]$ is an isomorphism 
that is compatible with the forgetful map, we have
$p(\bg[\bm]) = p(\bm)$. 
Let $\left\{ \kappa_i(\bm) \right\}$ be the set of components of
$\mgzb{n}(\bm)$.
This is a finite set since any component in $\mgzb{n}$
can be written as $\bg[\boldsymbol m']$ for some 
$\bg$ and $\bm'$ with $\prod m_i' = e$ in $G^n$.

 \begin{defi}
   Given a $G$-CohFT $((H,\rho), \eta, \{\Lambda_n \}, 1)$, its \emph{degree $n$ symmetric 
   correlation function $Y^n_S$} is defined as
   \[ Y^n_S(v) := \sum_i \frac{1}{p(\kappa_i(\bm))}
   \int_{\kappa_i(\boldsymbol m)} \Lambda_n (v) \, ,\]
   for any $v \in H_{\bm}$.
 \end{defi}

 \begin{pro}
   $Y^n_S$ is a symmetric $n$-linear form.
   \label{pro:symmetric}
 \end{pro}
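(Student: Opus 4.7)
The plan is to establish the two parts of the claim separately. Multilinearity is immediate: for fixed $\bm$ the assignment $v \mapsto \int_{\kappa_i(\bm)} \Lambda_n(v)$ is linear in $v \in H_{\bm}$ because $\Lambda_n$ is linear in its vector argument and the integral is linear; extending over all $\bm \in G^n$ via the decomposition $\hn = \bigoplus_{\bm} H_{\bm}$ gives $n$-linearity of $Y^n_S$. So the real content is the $S_n$-symmetry.

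For symmetry the key input is the $G^n \rtimes S_n$-invariance of $\Lambda_n$ together with compatibility of the $S_n$-action on the various moduli spaces with the forgetful morphisms. First I would make precise the $S_n$-action: a permutation $\sigma$ relabels the marked points and induces an isomorphism $\sigma : \mgzb{n}(\bm) \to \mgzb{n}(\sigma \bm)$ of stacks, hence a bijection on the sets of components sending $\kappa_i(\bm)$ to some $\sigma\cdot\kappa_i(\bm)$, which we may index so that $\{\kappa_i(\sigma\bm)\} = \{\sigma \cdot \kappa_i(\bm)\}$. Second, since the forgetful maps $\stt$ and $\sth$ in the diagram $\mgzb{n} \to \mzb{n}(BG) \to \mzb{n}$ all intertwine the $S_n$-actions, the degree $p(\kappa)$ is invariant under $\sigma$, i.e.\ $p(\sigma \cdot \kappa_i(\bm)) = p(\kappa_i(\bm))$.

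Third, the $S_n$-invariance of $\Lambda_n$ translates into the identity
\[
\sigma^\ast \bigl(\Lambda_n(\sigma \cdot v)\bigr) = \Lambda_n(v)
\]
of cohomology classes on $\mgzb{n}(\bm)$, for $v \in H_{\bm}$. Combined with the change-of-variables formula $\int_{\kappa} \sigma^\ast \alpha = \int_{\sigma \cdot \kappa} \alpha$ for the bijection $\sigma$ on components, this yields
\[
\int_{\kappa_i(\bm)} \Lambda_n(v) \;=\; \int_{\sigma \cdot \kappa_i(\bm)} \Lambda_n(\sigma \cdot v).
\]
Finally, I would assemble these pieces:
\begin{align*}
Y^n_S(\sigma \cdot v)
&= \sum_i \frac{1}{p(\kappa_i(\sigma\bm))} \int_{\kappa_i(\sigma\bm)} \Lambda_n(\sigma \cdot v) \\
&= \sum_i \frac{1}{p(\sigma \cdot \kappa_i(\bm))} \int_{\sigma \cdot \kappa_i(\bm)} \Lambda_n(\sigma \cdot v) \\
&= \sum_i \frac{1}{p(\kappa_i(\bm))} \int_{\kappa_i(\bm)} \Lambda_n(v) \;=\; Y^n_S(v).
\end{align*}

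The step I expect to require the most care is confirming that the $S_n$-action behaves correctly on the weighting factors $1/p(\kappa)$. One must verify both that $\sigma$ permutes components of $\mgzb{n}(\bm)$ onto those of $\mgzb{n}(\sigma \bm)$ as a bijection (so the indexing reshuffle is legitimate) and that $\stt$ intertwines the $S_n$-actions, so that the generic degree $p(\kappa)$ is preserved. Both should follow from unwinding the definitions of the $S_n$-action on pointed admissible $G$-covers and on $\mzb{n}(BG)$ given in \cite{jkk}, but this is the one place where the reweighting in the definition of $Y^n_S$ could a priori break the symmetry, so it deserves explicit attention.
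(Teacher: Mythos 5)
Your proposal is correct and takes essentially the same route as the paper's own proof: reduce to homogeneous $v \in H_{\bm}$, use the $G^n \rtimes S_n$-invariance of $\Lambda_n$ to transfer $\sigma$ from the argument to the integration domain, and invoke the compatibility of the $S_n$-action with the forgetful map $\stt$ to conclude $p(\sigma \cdot \kappa_i(\bm)) = p(\kappa_i(\bm))$. The step you flag as requiring the most care is precisely what the paper handles with its commutative square relating $u(\kappa_j(\bm))$ and $u(\kappa_i(\sigma\bm))$ over $\tau(\kappa_j(\bm))$ and $\tau(\kappa_i(\sigma\bm))$, so your instinct matches the paper's treatment exactly.
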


 \begin{proof}
It is enough to consider homogeneous vectors.
Suppose $v \in H_{\boldsymbol m}$ and $\sigma \in S_n$.
\begin{align*}
  Y^n_S(\sigma v) &= \sum_i \frac{1}{p(\kappa_i(\sigma \bm))}
  \int _{\kappa_i(\sigma \bm)} \Lambda_n (\sigma v) \\
  &= \sum_i \frac{1}{p(\kappa_i(\sigma \bm))} 
  \int_{\kappa_i (\sigma \bm)} \sigma \Lambda_n(v) \quad 
  \text{by the $G^n \rtimes S_n$ invariance,} \\
  &= \sum_i \frac{1}{p(\kappa_i (\sigma \bm))}
  \int_{\sigma^{-1} \kappa_i (\sigma \bm)} \Lambda_n(v) \, .
\end{align*}

Note that we have the following commutative diagram, where the two $\sigma$'s are
isomorphisms.
\[
  \begin{CD}
    u(\kappa_j(\bm)) @>\sigma>>  u(\kappa_i(\sigma \bm)) \\
@VV\sttilde_{\bar{\bm}}V   @VV\sttilde_{\bar{\sigma \bm}}V \\ 
\tau(\kappa_j(\bm)) @>\sigma>> \tau(\kappa_i(\sigma \bm))
  \end{CD}
\]
This implies that $p(\kappa_i(\sigma \bm)) = p(\kappa_j(\bm))$.
Hence,
\begin{align*}
  Y^n_S(\sigma v) 
  = \sum_j \frac{1}{p(\kappa_j(\bm))}
  \int_{\kappa_j (\bm)} \Lambda_n(v)
  = Y^n_S(v) \ .
\end{align*}
   \end{proof}

Since the ways of representing components 
as $\bg[\bm']$
are not unique,
let $\nu(\kappa)$ be the number of ways that $\kappa$ can be written
in this way.
In particular, let $\nu(\bm) := \nu([\bm])$.
Note that $\nu$ is invariant under the $B_n$-action,
and $\nu(\bg [\boldsymbol m]) = \nu (\boldsymbol m)$
for any $\bg \in G^n$ and $\boldsymbol m$,
since these actions are isomorphisms.
Also, we see that
the component $\bg[\bm]$ has the constant monodromy
$md(\bg[\bm]) := \left( \gamma_1 m_1 \gamma_1^{-1} , 
\dots, \gamma_n m_n \gamma_n^{-1} \right)$.

\begin{defi}
  The \emph{degree $n$ braided correlation function $Y^n_B$} of 
  a $G$-CohFT $((H,\rho), \eta, \{\Lambda_n \}, 1)$ is defined as
  \[
    Y^n_B(v) := 
    \sum_{\prod \boldsymbol m = e}
    \frac{|G|^n}{p(\boldsymbol m) \nu(\boldsymbol m)}
    \int_{[\boldsymbol m]} \Lambda(v) \, ,
  \]
  for any $v \in H^{\otimes n}$.
  \label{defi:ynb}
\end{defi}

\begin{rem}
  Note that $Y^n_B \in (\brnhs)_e$ for each $n$.
\end{rem}

The analogue of Proposition~\ref{pro:symmetric} follows.

\begin{pro}
  $Y^n_B$ is a braided $n$-linear form.
\end{pro}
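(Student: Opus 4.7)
The plan is to apply Corollary~\ref{cor:braided}: it suffices to show $Y^n_B(v) = Y^n_B(b \cdot v)$ for every $b \in B_n$ and every $G^n$-homogeneous $v \in H_{\bm_0}$. Since the $B_n$-action on $\mathrm{Obj}(\gn)$ preserves the product $\prod_i m_i$, both sides vanish unless $\prod_i (\bm_0)_i = e$, so I would reduce to this case. Under this assumption, only a single term survives in each defining sum: the $\bm = \bm_0$ term for $Y^n_B(v)$, and the $\bm = b \cdot \bm_0$ term for $Y^n_B(b \cdot v)$.

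Next, I would exploit the fact that the $B_n$-action on $H_{\bm_0}$ is realized by an element $b_{\bm_0} \in G^n \rtimes S_n$, so $b \cdot v = b_{\bm_0} \cdot v$. The $G^n \rtimes S_n$-invariance of $\Lambda_n$ then yields $\Lambda_n(b \cdot v) = b_{\bm_0}^\ast \Lambda_n(v)$, and a change of variables along the isomorphism $b_{\bm_0} : \mgzb{n}(\bm_0) \to \mgzb{n}(b \cdot \bm_0)$ gives $\int_{[b \cdot \bm_0]} \Lambda_n(b \cdot v) = \int_{b_{\bm_0}^{-1}[b \cdot \bm_0]} \Lambda_n(v)$.

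The main obstacle is the geometric identity $b_{\bm_0} \cdot [\bm_0] = [b \cdot \bm_0]$. Tracing through the construction of the distinguished component $[\bm]$ via the canonical generators of $\pi_1(\mathbb{P}^1 \setminus P, 0)$ and Proposition~\ref{pro:holonomy}, the half-twist implementing the standard generator $b_i$ transforms these generators exactly as $b_i$ acts on $G^n$, and this geometric transformation is realized on the admissible cover by the element $b_{i, \bm_0} \in G^n \rtimes S_n$. The general case follows by iterating along a word decomposition, using the consistency of the $\gn$-structure to match both sides.

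With this identity in hand, I would observe finally that $p$ and $\nu$ are invariant under the $G^n \rtimes S_n$-action on components of $\mgzb{n}$: the $G^n$-part preserves fibers of $\sttilde$, and the $S_n$-part is compatible with the corresponding $S_n$-action on $\mzb{n}(BG)$, so the degree $p$ is preserved; and $\nu$ is invariant because it counts presentations of a component up to isomorphism. Hence $p(b \cdot \bm_0)\,\nu(b \cdot \bm_0) = p(\bm_0)\,\nu(\bm_0)$, and assembling the pieces gives $Y^n_B(b \cdot v) = Y^n_B(v)$, as required.
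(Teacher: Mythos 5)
Your proposal is correct and follows essentially the same route as the paper: the paper's proof likewise rests on the identity $b[\bm] = [b \bm]$ (justified by the same fundamental-group/choice-of-generators picture you trace through), the $B_n$- and $G^n$-invariance of $\nu$ and $p$, and a reduction to the argument already given for the symmetric correlation functions $Y^n_S$, where the $G^n \rtimes S_n$-invariance of $\Lambda_n$ and the change of variables along the induced isomorphism of components do the work. You have merely written out in full the details the paper compresses into ``completely analogous to that of Proposition~\ref{pro:symmetric}.''
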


\begin{proof}
  Note that the $B_n$ action on $\mzb{n}(BG)$ amounts to
  choosing a new set of generators of the fundamental 
  group on the underlying curve, to determine the holonomy 
  of its admissible $G$-cover.
Using the $B_n$-invariance of $\nu$, and the
fact that $b[\boldsymbol m] = [b \boldsymbol m]$,
the proof is completely analogous to that of Proposition~\ref{pro:symmetric}.
\end{proof}

Note that $Y^n_B$ restricted to $(H^G)^{\otimes n}$ are symmetric $n$-linear forms.
Moreover, we have the following proposition.

\begin{pro}
  For $v \in (H^G)^{\otimes n}$, 
  \[
Y^n_S(v) = Y^n_B(v) \, .
  \]
  \label{pro:symbr}
\end{pro}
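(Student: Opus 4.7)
The plan is to decompose $v \in (H^G)^{\otimes n}$ into $G^n$-homogeneous pieces and reorganize the sum defining $Y^n_S$ so that it matches the sum defining $Y^n_B$, using the $G^n$-invariance of $\Lambda_n$ and of $v$ to trade integration over a component $\bg[\bm']$ for integration over $[\bm']$.

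First, I would observe that $(H^G)^{\otimes n}$ equals $(H^{\otimes n})^{G^n}$ for the componentwise $G^n$-action: any tensor whose factors are each $G$-invariant is invariant under the entire $G^n$. Writing $v = \sum_{\bm \in G^n} v_{\bm}$ with $v_{\bm} \in H_{\bm}$ and matching $G^n$-degrees in $\bg \cdot v = v$, this invariance translates to the key identity $v_{\bg \bm \bg^{-1}} = \bg \cdot v_{\bm}$ for every $\bg \in G^n$. Only $\bm$ with $\prod m_i = e$ contribute, since otherwise $\mgzb{n}(\bm)$ is empty.

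Next, using linearity I would write $Y^n_S(v) = \sum_{\bm} \sum_{i} \frac{1}{p(\kappa_i(\bm))} \int_{\kappa_i(\bm)} \Lambda_n(v_{\bm})$, which amounts to a sum over all components $\kappa$ of $\mgzb{n}$ (indexed by the monodromy they carry). Since every component is expressible as $\bg [\bm']$ with $\prod m'_i = e$, and each admits exactly $\nu(\kappa) = \nu(\bm')$ such presentations, I can reparametrize this as a sum over pairs $(\bg, \bm') \in G^n \times \{ \bm' : \prod m'_i = e\}$ and divide by $\nu(\bm')$. Combining with $p(\bg[\bm']) = p(\bm')$ gives
\[
Y^n_S(v) = \sum_{\bg \in G^n} \sum_{\bm':\prod m'_i = e} \frac{1}{p(\bm') \nu(\bm')} \int_{\bg[\bm']} \Lambda_n(v_{\bg \bm' \bg^{-1}}).
\]

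The decisive step is then to replace $\int_{\bg[\bm']} \Lambda_n(v_{\bg \bm' \bg^{-1}})$ by $\int_{[\bm']} \Lambda_n(v_{\bm'})$. Using $v_{\bg \bm' \bg^{-1}} = \bg \cdot v_{\bm'}$ (from $G^n$-invariance of $v$) and the $G^n \rtimes S_n$-invariance of $\Lambda_n$ (which implies $\bg_*\, \Lambda_n(v_{\bm'}) = \Lambda_n(\bg \cdot v_{\bm'})$ under the isomorphism $\bg : [\bm'] \to \bg[\bm']$), the two integrals are equal. After this substitution, the integrand no longer depends on $\bg$, so the sum over $\bg \in G^n$ contributes a factor of $|G|^n$, yielding exactly the expression of Definition~\ref{defi:ynb} for $Y^n_B(v)$.

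The main obstacle is cleanly executing the identification in the third paragraph: one must verify that the two flavors of $G^n$-invariance (of the class $\Lambda_n$ on $\mgzb{n}$ and of the vector $v$) combine compatibly with the geometric action $\bg : [\bm'] \to \bg[\bm']$. This is where the hypothesis $v \in (H^G)^{\otimes n}$ is used essentially, since without it the homogeneous pieces $v_{\bm}$ across the $G^n$-orbit of $\bm'$ would not be related by the $G^n$-action and the reindexing would fail.
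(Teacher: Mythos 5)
Your proof is correct and is essentially the paper's own argument: the paper first establishes the homogeneous identity \eqref{eq:bsrelation}, $Y^n_S(w) = |G|^{-n}\, Y^n_B\bigl(\sum_{\bg \in G^n} \bg w\bigr)$, and then averages over the homogeneous pieces of $v \in (H^G)^{\otimes n}$, whereas you merge these two steps by invoking $v_{\bg \bm \bg^{-1}} = \bg \cdot v_{\bm}$ from the start; the computational core --- presenting every component as $\bg[\bm']$ with $\prod m'_i = e$, dividing by $\nu(\bm')$, using the $G^n$-invariance of $\Lambda_n$ and of $p$, and collapsing the now-free sum over $\bg$ to a factor of $|G|^n$ --- is identical.

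One aside in your first paragraph is false, though harmless: you assert that only $\bm$ with $\prod m_i = e$ contribute because $\mgzb{n}(\bm)$ is empty otherwise. For nonabelian $G$ this fails: the marked points $\tilde{p}_i$ may be arbitrary translates within their fibers, so the monodromies are componentwise conjugates of a holonomy datum, and $md(\bg[\bm']) = (\gamma_1 m'_1 \gamma_1^{-1}, \dots, \gamma_n m'_n \gamma_n^{-1})$ has product $\prod_i \gamma_i m'_i \gamma_i^{-1}$, which need not equal $e$ even though $\prod_i m'_i = e$. Since your reindexing in the next paragraph correctly runs over \emph{all} components of $\mgzb{n}$ via the pairs $(\bg, \bm')$, nothing downstream depends on this remark, and the contributions from components with $\prod md \neq e$ are precisely what the $\bg$-sum transports back to the distinguished components $[\bm']$; simply delete the remark.
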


\begin{proof}
  We first prove Equation~\eqref{eq:bsrelation} below, that relates 
  $Y^n_S$ and $Y^n_B$ for any homogeneous vector $w$.
  Hence, suppose $w \in H_{\boldsymbol m}$.
  Then we have
  \begin{align*}
    Y^n_S(w) 
    &= \sum_i \frac{1}{p(\kappa_i(\boldsymbol m))} 
    \int_{\kappa_i(\boldsymbol m)} \Lambda(w) \\
    &=  \sum_{\substack{\prod \boldsymbol m' = e \\ 
    md(\boldsymbol \gamma[\boldsymbol m'])=\boldsymbol m }}
    \frac{1}{\nu(\boldsymbol m')}
    \frac{1}{p(\bg[\bm'])}
    \int_{\boldsymbol \gamma [\boldsymbol m']} \Lambda(w).
  \end{align*}
  By the $G^n \rtimes S_n$-invariance of $\Lambda_n$ 
  and $G^n$-invariance of $p$,
\[
  Y^n_S(w) =   \sum_{\substack{\prod \boldsymbol m' = e \\ 
    md(\boldsymbol \gamma[\boldsymbol m'])=\boldsymbol m }}
    \frac{1}{\nu(\boldsymbol m')}
       \frac{1}{p(\bm')}
 \int_{[\boldsymbol m']} \Lambda(\boldsymbol \gamma^{-1} w).
\]
By the definition of $Y^n_B$,
\[
Y^n_S(w) = \sum_{\substack{\prod \boldsymbol m' = e \\ 
    md(\boldsymbol \gamma[\boldsymbol m'])=\boldsymbol m }}
    \frac{1}{|G|^n} Y^n_B (\boldsymbol \gamma^{-1} w).
\]
Since $Y^n_B$ is zero on the vectors of total degree not equal to $e$,
\begin{align}
  Y^n_S(w) = \frac{1}{|G|^n} 
  Y^n_B(\sum_{\boldsymbol \gamma \in G^n}\boldsymbol \gamma w) \, .
  \label{eq:bsrelation}
\end{align}

Now we use this equation to prove the proposition.
Let $c(\gamma)$ be the number of elements in the conjugacy class of $\gamma$.
For $\boldsymbol m \in G^n$, define
$c(\boldsymbol m) := \prod c(m_i)$.
Suppose $v \in H^G_{\overline{\boldsymbol m}}$ and
let \[v = \sum_{i = 1} ^{c(\boldsymbol m)} w_i\] be the homogeneous decomposition of $v$.
Then
\[
  |G|^n v = \sum_{\boldsymbol \gamma \in G^n} \boldsymbol \gamma v = 
  \sum_{\boldsymbol \gamma \in G^n} \sum_{i = 1} ^{c(\boldsymbol m)} \boldsymbol \gamma w_i\, .
\]
Hence we conclude
\begin{align*}
  Y^n_S(v) = Y^n_S(\sum_{i=1}^{c(\boldsymbol m)} w_i) &= 
  \sum_{i=1}^{c(\boldsymbol m)} Y^n_S(w_i)\\ 
  =& \sum_{i=1}^{c(\boldsymbol m)} \frac{1}{|G|^n} 
  Y^n_B(\sum_{\boldsymbol \gamma \in G^n} \boldsymbol \gamma w_i) 
= Y^n_B(v) \, .
\end{align*}
\end{proof}
 
 \subsection{Pre-$G$-Frobenius Manifold Structure}
 \label{sec:gcohft}

In this section, we prove that any $G$-cohomological field theory
gives rise to a pre-$G$-Frobenius manifold structure on its state space.
We first make the following observation.
Let $H$ be a finite dimensional vector space.
\begin{lem}
  If $Y := \sum Y^n \in \symhs$ satisfies the WDVV-equations,
  then $Y' := \sum a^n Y^n$  for some $a \in k$ also satisfies the WDVV-equations.
  \label{lem:modified}
\end{lem}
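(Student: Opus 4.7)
The plan is to identify $Y'$ with the pullback of $Y$ along the linear rescaling map $\phi_a : H \to H$ given by $v \mapsto av$. Since each $Y^n$ is homogeneous of degree $n$, one has $Y^n \circ \phi_a = a^n Y^n$, so $Y' = Y \circ \phi_a$ as formal power series on $H$.

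With this identification in hand, the chain rule gives $Y'_{ijk}(x) = a^3 Y_{ijk}(ax)$, where subscripts denote partial derivatives with respect to the coordinates $\{x^b\}$ dual to $\{\partial_b\}$. Substituting into the left-hand side of the WDVV equation for $Y'$ at a point $x \in H$ produces $a^6$ times the left-hand side of the WDVV equation for $Y$ evaluated at $ax$, and similarly for the right-hand side. Since $Y$ is assumed to satisfy WDVV at every point, in particular at $ax$, the factors of $a^6$ cancel and $Y'$ satisfies WDVV at $x$. As $x$ was arbitrary, the desired conclusion follows.

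The computation is essentially immediate because $\phi_a$ is linear, so no cross terms arise and the metric $g$ remains unchanged. The only mildly degenerate case is $a=0$, in which $Y'$ is a constant (or zero once low-degree terms are discarded) and WDVV holds trivially. I do not anticipate any genuine obstacle; one may equivalently view the lemma as the statement that $\{Y_a(x) := Y(ax)\}_{a \in k}$ furnishes a one-parameter family of formal Frobenius manifold structures on $(H,g)$, obtained from $Y$ by rescaling coordinates without rescaling the metric.
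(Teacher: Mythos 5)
Your argument is correct, but it is packaged differently from the paper's. The paper disposes of the lemma in one line by checking the identity degree by degree: WDVV is a system of formal identities whose homogeneous degree-$d$ component only involves the products $(Y^m)_{abk}g^{kl}(Y^n)_{lcd}$ with $m+n-6=d$, so under $Y^n \mapsto a^nY^n$ every term of that component acquires the common factor $a^{m+n}=a^{d+6}$ on both sides, and WDVV for $Y'$ follows coefficientwise from WDVV for $Y$. You instead realize $Y'$ globally as the pullback $\phi_a^\ast Y$ along the linear rescaling $v \mapsto av$ and transport the whole system at once via the chain rule, the metric $g$ being untouched; the mechanism is the same (homogeneity forces uniform scaling), but your version buys the conceptual reading of a one-parameter family of potentials obtained by linear reparametrization. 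Two small remarks on your write-up: the pointwise phrase ``evaluated at $ax$'' should be read as the formal substitution $x^b \mapsto a x^b$, i.e.\ as the algebra morphism $\phi_a^\ast$ of Proposition~\ref{pro:symfunctorial} — which your first paragraph in effect already supplies, so no convergence issue arises — and your separate treatment of $a=0$ is then unnecessary, since $\phi_0^\ast$ is still a ring map (while the paper's coefficientwise argument never needs $a$ invertible in the first place). The degree-by-degree formulation is also the more flexible one for the paper's later purpose: Theorem~\ref{thm:cor} invokes ``a simple modification'' of this lemma with weights $a^{n-1}$ in place of $a^n$, which is immediate coefficientwise, whereas in your setup it requires the extra (easy) observation that an overall scalar multiple of $Y$ preserves WDVV because the equations are quadratic in $Y$.
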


\begin{proof}
  This is true since WDVV-equations can be proved degree-by-degree.
\end{proof}

Given a $G$-CohFT $((H,\rho), \eta, \{\Lambda_n \}, 1)$,
set \[Y_B := \sum_{n \geq 3} \frac{1}{n!}Y_B^n \, .\]

\begin{thm}
$((H, \rho), \eta, Y_B)$ is a pre-$G$-Frobenius manifold.
\label{thm:cor}
\end{thm}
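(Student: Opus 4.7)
The plan is to verify Definition~\ref{defi:gfm}. That $Y_B \in [\brhs]_e$ is immediate from the construction and the preceding proposition, so it remains to check that the two restrictions $(Y_B)_e := \ies Y_B$ and $(Y_B)^G := \igs Y_B$---which live in ordinary symmetric algebras by Propositions~\ref{pro:untwisted} and \ref{pro:invarsym}---each satisfy the WDVV-equations. In both cases I would identify the restricted potential with a constant-times-$a^n$ rescaling of the potential of a Kontsevich--Manin CohFT known to exist, and then invoke Lemma~\ref{lem:modified} together with the fact that an overall constant rescaling of the potential preserves WDVV.

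For $(Y_B)^G$: first replace $Y_B^n$ by $Y_S^n$ on $(H^G)^{\otimes n}$ via Proposition~\ref{pro:symbr}. For $v \in H^G_{\overline{\bm}}$ the defining identity $\tilde{st}_{\overline{\bm}}^* \hat\Lambda_n(v) = \Lambda_n(v)$ converts $\int_\kappa \Lambda_n(v_{\bm(\kappa)}) = p(\kappa)\int_{\tau(\kappa)} \hat\Lambda_n(v)$ for each component $\kappa$ of $\mgzb{n}(\overline{\bm})$, so the $1/p(\kappa)$ weights collapse and summing produces
\[
Y_S^n(v) \;=\; \sum_{\tau}\,|u(\tau)|\int_\tau \hat\Lambda_n(v),
\]
summed over components $\tau$ of $\mzb{n}(BG;\overline{\bm})$. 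The $G^n$-equivariance of $\tilde{st}_{\overline{\bm}}$, combined with the $G$-invariance of $v$, should make $|u(\tau)|$ independent of $\tau$ and of the form $a^n$ in $n$, thereby exhibiting $Y_S^n|_{(H^G)^{\otimes n}}$ as $a^n\,\bar Y^n$, where $\bar Y^n(v)=\int_{\mzb{n}} \bar\Lambda_n(v)$ is the correlation function of the CohFT $(H^G, \eta^G/|G|, \{\bar\Lambda_n\})$ of Section~\ref{ssec:gcohft}. That CohFT's potential satisfies WDVV with respect to $\eta^G/|G|$, and hence also with respect to $\eta^G$ (uniform metric rescaling preserves WDVV); Lemma~\ref{lem:modified} then gives WDVV for $(Y_B)^G$.

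For $(Y_B)_e$: if $v \in H_e^{\otimes n}$, only the $\bm = (e,\ldots,e)$ summand of Definition~\ref{defi:ynb} survives, giving
\[
Y_B^n(v) \;=\; \frac{|G|^n}{p(\b e)\,\nu(\b e)}\int_{[(e,\ldots,e)]}\Lambda_n(v).
\]
A direct analysis of the trivial cover $G\times C \to C$---its pointed configurations indexed by the choice of one of $|G|$ sheets over each marked point, modulo the diagonal $G$-action on the cover---identifies $[(e,\ldots,e)]$ as a copy of $\mzb{n}$ on which $\tilde{st}$ is an isomorphism of coarse moduli, and pins the prefactor to a value of the form $c\cdot a^n$ with $a=|G|$. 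I would then check that $(H_e, \ies\eta, \{\Lambda_n|_{[(e,\ldots,e)]}\}, 1)$ is itself a genus-zero CohFT: nondegeneracy of $\ies\eta$ is Lemma~\ref{lem:nondegenerate}; forgetting tails is compatible with the restriction because $\tau$ sends $[(e,\ldots,e,e)]$ to $[(e,\ldots,e)]$; and the $G$-CohFT gluing axiom, when restricted to the canonical component, picks up only the $\mu=e$ mediator (the $\mu\neq e$ gluings land on other components of $\mgzb{n}((e,\ldots,e))$), reproducing the ordinary CohFT gluing on $\mzb{n}$. Its potential then satisfies WDVV, and Lemma~\ref{lem:modified} together with a constant rescaling yields WDVV for $(Y_B)_e$.

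The main obstacle is the component-level bookkeeping on $\mgzb{n}$: for the $H^G$ case, verifying that the fiber-counts $|u(\tau)|$ are genuinely independent of $\tau$ (which requires comparing the automorphism groups of the various $G$-covers appearing) and that the resulting scalar has the form $a^n$; for the $H_e$ case, justifying that the $\mu\neq e$ contributions to the $G$-CohFT gluing axiom really do decouple from the canonical component so that the restricted axioms reproduce a genuine CohFT on $H_e$.
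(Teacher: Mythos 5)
Your overall architecture matches the paper's: replace $Y_B$ by $Y_S$ on $(H^G)^{\otimes n}$ via Proposition~\ref{pro:symbr}, and reduce the untwisted sector to the distinguished component $[\b{e}] \cong \mzb{n}$ with the prefactor absorbed by Lemma~\ref{lem:modified}. But the invariant-sector step has a genuine gap. Your per-component identity $\int_\kappa \Lambda_n(v) = p(\kappa)\int_{\tau(\kappa)}\widehat{\Lambda}_n(v)$ is false whenever $u(\tau(\kappa))$ has more than one component: by definition $p(\kappa)$ is the degree of $\stt$ restricted to the \emph{whole} preimage $u(\kappa) = \stt^{-1}(\tau(\kappa))$, so if $u(\tau)$ consists of components $\kappa_1, \dots, \kappa_N$ mapping onto $\tau$ with individual degrees $d_i$, then $\sum_i d_i = p$ and $\int_{\kappa_i}\Lambda_n(v) = d_i \int_\tau \widehat{\Lambda}_n(v)$, not $p\int_\tau \widehat{\Lambda}_n(v)$. (Indeed your formula is self-inconsistent: it forces $d_i = p$ for every $i$, hence $N = 1$, yet you then carry the factor $|u(\tau)| = N$.) With the correct bookkeeping the weighted sum collapses exactly: $\sum_{\kappa_i \subset u(\tau)} \frac{1}{p}\int_{\kappa_i}\Lambda_n(v) = \frac{1}{p}\sum_i d_i \int_\tau \widehat{\Lambda}_n(v) = \int_\tau \widehat{\Lambda}_n(v)$; equivalently, $\stt_\ast\bigl(\sum_i \frac{1}{p(\kappa_i)}\kappa_i\bigr)$ is the fundamental class of $\mzb{n}(BG;\bar{\bm})$, and pushing forward along $\sth$ (the Gysin map) yields $Y^n_S(v) = \int_{\mzb{n}}\bar{\Lambda}_n(v) = (Y^G)^n(v)$ \emph{on the nose}. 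This is precisely the paper's argument (projection formula plus Gysin map): the weights $1/p(\kappa)$ in the definition of $Y^n_S$ are designed exactly so that no rescaling occurs in the invariant sector. Consequently the ``main obstacle'' you flag --- proving $|u(\tau)|$ is independent of $\tau$ and of the form $a^n$ --- is not an obstacle at all but an artifact of the erroneous identity; no such uniformity statement is true in the form you need, nor is one needed, and as written your invariant-sector argument does not close.

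Two smaller points. Definition~\ref{defi:gfm} also requires $(H,\rho)$ to be self-invariant and $\eta$ to be $G$-invariant, symmetric, nondegenerate and grading-preserving; you never address these, while the paper disposes of them in one line via Remark~\ref{rem:gfa} (the $G$-Frobenius algebra of Theorem~\ref{thm:gfrob} inside the $G$-CohFT). On the other hand, your untwisted-sector analysis is \emph{more} detailed than the paper's: the paper simply asserts that $[\b{e}]$ is isomorphic to $\mzb{n}$ and that one is ``reduced to an ordinary CohFT,'' whereas you spell out that only the $\mu = e$ mediator survives in the gluing axiom on the distinguished component and that $\eta$ restricted to $H_e$ is invertible by Lemma~\ref{lem:nondegenerate}; that verification is correct and in the right spirit. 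The paper also pins the prefactor concretely, $\nu(\b{e}) = |G|$ and $p(\b{e}) = 1$, so $Y^n_B(v) = |G|^{n-1}\int_{[\b{e}]}\Lambda_n(v)$, and then (as you say) a global constant rescaling is harmless since the WDVV-equations are quadratic in $Y$ and checked degree by degree, so Lemma~\ref{lem:modified} applies.
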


\begin{proof}
  By Remark~\ref{rem:gfa}, $\rho$ is self-invariant, and $\eta$ is 
  $G$-invariant and preserves the $G$-grading.
  
We first consider $(i^G)^\ast(Y_B)$.
Let $(H^G, \eta^G, \overline{\Lambda}_n)$ be the CohFT
obtained by taking the $G$-quotient of our $G$-CohFT.
Recall that its correlation functions are defined as
\[
  (Y^G)^n(v):= \int_{\overline{M}_{0,n}} \overline{\Lambda}(v)
\]
for any $v \in (H^G)^{\otimes n}$.
In view of the equivalence of CohFT's and Frobenius manifolds,
and Proposition~\ref{pro:symbr}, it is enough to show
\[
Y^n_S(v) = (Y^G)^n(v) 
\]
for any $v \in (H^G)^{\otimes n}$.
Suppose $v$ is homogeneous of $\overline{G}$-degree equal to $\overline{\boldsymbol m}$,
and $v = \sum_{\bm \in \bar{\bm}} w_{\bm}$ be its homogeneous decomposition.
Since $\Lambda_n(w_{\bm})$ is supported on $\mgzb{n}(\bm)$, we have
\begin{align*}
  Y^n_S(v) 
  &= \sum_{\bm \in \bar{\bm}} \sum_i \frac{1}{p(\kappa_i(\bm))}
  \int_{\kappa_i(\bm)} \Lambda_n(w_{\bm}) \\
  &= \sum_{\bm \in \bar{\bm}} \sum_i \frac{1}{p(\kappa_i(\bm))}
  \int_{\kappa_i(\bm)} \Lambda_n(v) 
= \int_{\sum_{\bm \in \bar{\bm}} \sum_i \frac{1}{p(\kappa_i(\bm))} \kappa_i(\bm)}
\Lambda_n(v) \, .
\end{align*}
Using the projection formula, we have
\[
  Y^n_{S}(v) 
  = \int_{\overline{M}(BG;\overline{\boldsymbol m})}
\widehat{\Lambda}_n(v) \, .
\]
Using the Gysin map, we conclude that
\[
  Y^n_S(v) = \int_{\overline{M}_{0,n}} \overline{\Lambda}(v)
  = (Y^G)^n(v) \, .
\]

Consider $(i_e)^\ast(Y_B)$ next.
Let $\b{e} := (e, \dots, e)$, and $v \in H_{\b{e}}$.
Note that $\nu(\b{e}) = |G|$ since
$[\b{e}]$ can be also written as $\b{g}[\b{e}]$ where $\b{g} :=(g, \dots, g)$,
for any $g \in G$.
Since $p(\b{e})=1$, we have 
\[
  Y^n_B (v) = |G|^{n-1} \int_{[\b{e}]} \Lambda_n(v) \, .
\]
Observe that \[ Y' := \sum_n \frac{1}{|G|^{n-1}} \cdot \frac{1}{n!} Y^n_B \] satisfies the
WDVV-equations since $[\b{e}]$ is isomorphic to $\mzb{n}$ and
we are reduced to an ordinary CohFT. 
A simple modification of Lemma~\ref{lem:modified}
implies that $(i_e)^\ast(Y_B)$ satisfies the WDVV-equations.
\end{proof}

\begin{rem}
In fact, we can say a little bit more. By comparing 
Equation~\eqref{eq:gfa} and Definition~\ref{defi:ynb},
we notice that the 
multiplication structure of the $G$-Frobenius algebra
that is contained in the $G$-CohFT differs by constants 
from the one given by the degree $3$ braided correlation functions,
and these constants depend on the $G$-degrees of the vectors multiplied together.
This means that we can identify the two multiplication rules by
rescaling the metric on the underlying $G$-graded $G$-module.
In other words, a $G$-CohFT contains a $G$-Frobenius manifold
up to a rescaling of its metric.
\label{rem:scaling}
\end{rem}

\begin{rem}
  One can ask if it is possible to get a $G$-CohFT starting from a pre-$G$-Frobenius manifold,
  in analogy with the case of ordinary CohFT's and Frobenius manifolds.
  A difficulty answering this question is that the topology of $\mgzb{n}$ is
  not very well understood compared to that of $\mzb{n}$.
\end{rem}

\section{Examples of $\zz$-Frobenius Manifolds}
 \label{sec:example}

 \subsection{$G = \zz$}
\label{sec:z2}

In this section, we specialize to the case $G = \mathbb{Z}/2\mathbb{Z}=\{e,g\}$ and
prove the structure theorem~\ref{thm:structure} for pre-$\zz$-Frobenius manifolds.

We first state some general facts about $\mathbb{Z}/2\mathbb{Z}$-graded
$\mathbb{Z}/2\mathbb{Z}$-modules.

\begin{pro}
  Let $(H, \rho)$ be a $\zz$-graded $\zz$-module that is self-invariant.
  Then $H = H_e \oplus H_g$ has the following properties.
  \begin{enumerate}[(i)]
    \item The non-twisted sector $H_e$ has the following decomposition:
      \[H_e = H_i \bigoplus H_v \ ,\]
where $H_i$ is invariant under $\mathbb{Z}/2\mathbb{Z}$ and
$\rho(g)v = -v$ for any $v \in H_v$.
\item The $G$-invariants decomposes as
  \[H^G = H_i \bigoplus H_g \ .\]
  \end{enumerate}
  \label{pro:z2FA}
\end{pro}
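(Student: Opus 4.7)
The plan is to apply the standard eigenspace decomposition of a finite group representation in characteristic zero, combined with the self-invariance hypothesis, separately on each $G$-homogeneous piece.

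First I would observe that $\rho(g)^2 = \rho(g^2) = \rho(e) = \mathrm{id}_H$, so $\rho(g)$ is an involution on $H$. Since $\mathrm{char}(k) = 0$, the involution $\rho(g)$ is diagonalizable with eigenvalues $\pm 1$, and $H$ decomposes as $H^+ \oplus H^-$, the $+1$- and $-1$-eigenspaces. Moreover, because the $G$-action respects the $G$-grading in $\dcg$-$\md$ (here $g$ is central in $\zz$, so the grading is even preserved strictly, not merely up to conjugation), the operator $\rho(g)$ preserves both $H_e$ and $H_g$. Hence we may intersect the eigenspace decomposition with the grading:
\[
 H_e = (H_e \cap H^+) \oplus (H_e \cap H^-), \qquad
 H_g = (H_g \cap H^+) \oplus (H_g \cap H^-).
\]

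Next I would invoke self-invariance: by definition $\rho(g)$ acts trivially on $H_g$, so $H_g \cap H^- = 0$ and $H_g = H_g \cap H^+$. Setting $H_i := H_e \cap H^+$ and $H_v := H_e \cap H^-$ gives exactly the decomposition claimed in (i), with the required eigenvalue description of $\rho(g)$. For (ii), $H^G$ is the subspace of vectors fixed by $\rho(g)$, i.e.\ $H^G = H^+$; combining the two splittings above yields
\[
 H^G = H^+ = (H_e \cap H^+) \oplus (H_g \cap H^+) = H_i \oplus H_g,
\]
which is (ii).

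There is no serious obstacle in this argument: the only ingredients are that $\rho(g)$ is an involution, characteristic zero for diagonalizability, compatibility of $\rho$ with the $\zz$-grading (which in this case is simply that $\rho(g)$ preserves each graded component, since conjugation in $\zz$ is trivial), and the self-invariance condition, which kills the $-1$-eigenspace inside $H_g$.
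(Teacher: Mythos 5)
Your proof is correct and follows essentially the same route as the paper: the paper's proof of (i) simply cites the representation theory of finite groups (the $\pm 1$-eigenspace decomposition of the involution $\rho(g)$ on the $\zz$-module $H_e$, which you carry out explicitly via diagonalizability in characteristic zero), and its proof of (ii) likewise combines self-invariance of $H_g$ with $(H_e)^G = H_i$. Your write-up merely makes explicit what the paper delegates to a citation, including the (correct) observation that $\rho(g)$ preserves each graded piece because conjugation in $\zz$ is trivial.
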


\begin{proof}
  (i) follows from the theory of representations of finite groups,
  since $H_e$ is a $\zz$-module.
  See, for example, \cite{serre}.
  For (ii), note that the conjugacy classes of $\zz$ are $\zz$ itself,
  and $H_g$ is $\zz$-invariant.
  Also, $(H_e)^G = H_i$. 
\end{proof}

Fix a homogeneous basis $\left\{ x^p \right\}$ of $H^\ast$ under the grading
\[H^\ast = H^\ast_i \bigoplus H^\ast_v \bigoplus H^\ast_g \, , \]
and denote their homogeneous $G$-degree as subscripts.
We have the following description of the  braided multilinear forms on $H$.

\begin{pro}
Given a basis of $H^\ast$ as above, we have the isomorphism of vector spaces
\begin{align*}
  \mathpzc{Br} \llbracket H^\ast \rrbracket \cong k \llbracket x^p \rrbracket / \mathcal{I} \ ,
\end{align*}
where $\mathcal{I}$ is generated by $\{x^p_v x^q_g\}$.
\label{pro:z2zbraid}
\end{pro}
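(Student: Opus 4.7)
The plan is to exploit the braided-commutative algebra structure of $\brhs$ from Theorem~\ref{thm:ringmap} to extract the defining relations of $\mathcal{I}$, and then verify bijectivity via a component-by-component dimension count using Proposition~\ref{pro:brnchs}.

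First I would extract algebraic relations from braided commutativity. For basis elements $x^p, x^q \in H^\ast$ of $G$-degrees $\alpha, \beta$, combining $x^p \circ x^q = \rho(\alpha^{-1}) x^q \circ x^p$ with Proposition~\ref{pro:z2FA} and the self-invariance of $H^\ast_g$ gives that factors whose types lie entirely in $\{i, v\}$ or entirely in $\{i, g\}$ commute (since $\rho(g)$ acts trivially on both $H^\ast_i$ and $H^\ast_g$), while applying braided commutativity twice to a pair $(x^p_v, x^q_g)$ yields $x^p_v \circ x^q_g = -\,x^p_v \circ x^q_g$, forcing $x^p_v \circ x^q_g = 0$. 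Since $\brhs$ is generated by $H^\ast$ as an algebra (by surjectivity of $\B_n$ together with the associativity of the braidization), this would define a well-defined surjective linear map $\Psi : k\llbracket x^p \rrbracket / \mathcal{I} \to \brhs$.

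For injectivity I would match dimensions component by component. Using Proposition~\ref{pro:brnchs}, decompose $\brnhs = \bigoplus_C \brnchs$ over components of $\gn$, which for $G = \zz$ are indexed by the number $k$ of $e$-entries in the $G$-degree profile. For each component fix $\bg_0 \in C$ and identify $\brnchs$ with $(H^\ast_{\bg_0})^{\mathrm{Stab}_{B_n}(\bg_0)}$; further decompose $H^\ast_{\bg_0}$ by type refinement at the $e$-positions. A direct computation using the semidirect-product formula $b_{j, \bg}^2 = (\ldots, \gamma_{j+1}, \gamma_j, \ldots) \times \mathrm{id}$ shows that when $\gamma_j = e$ and $\gamma_{j+1} = g$, the pure braid $b_j^2$ acts as $g$ on position $j$, killing any $v$-type coefficient there. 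Conjugation by elements of the $S_n$-stabilizer of $\bg_0$ then propagates this $g$-action to every $e$-position in a mixed component, so that the invariants correspond to $S_k \times S_{n-k}$-symmetric tensors on $(H^\ast_i)^{\otimes k} \otimes (H^\ast_g)^{\otimes n-k}$ in the mixed case, and to $S_n$-symmetric tensors on $(H^\ast_e)^{\otimes n}$ or $(H^\ast_g)^{\otimes n}$ in the pure cases. These dimensions agree precisely with those of good monomial classes of $k\llbracket x^p \rrbracket / \mathcal{I}$ supported on $C$.

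The main obstacle I expect is the stabilizer analysis in the injectivity step: verifying that conjugates of $b_j^2$ by elements of $\mathrm{Stab}_{B_n}(\bg_0)$ suffice to force $i$-type at every $e$-position of a mixed component, not just at those adjacent to a $g$-position. This should follow from the surjection $\mathrm{Stab}_{B_n}(\bg_0) \twoheadrightarrow \mathrm{Stab}_{S_n}(\bg_0) = S_k \times S_{n-k}$, together with the fact that $S_k$ acts transitively on the $e$-positions, so conjugation transports the $g$-action to each of them in turn.
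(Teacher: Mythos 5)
Your proof is correct, but it takes a genuinely different route from the paper's. The paper argues directly on braided multilinear forms: given $X \in \brnhs$, it uses the evaluation characterization of Corollary~\ref{cor:braided} to slide a $v$-type factor next to a $g$-type factor (the intervening $H_i$-factors braid trivially, since $\zz$ acts trivially on $H_i$) and then braids the adjacent pair twice, picking up the sign $-1$ from $\rho(g)w_v = -w_v$; hence $X$ vanishes on any tensor containing both types, and combined with symmetry on $H_e$ and on $H^G$ (Propositions~\ref{pro:untwisted} and \ref{pro:invarsym}) the identification of symmetric $n$-linear forms with degree-$n$ polynomials yields the isomorphism, with the final bijectivity bookkeeping left implicit. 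Your double application of braided commutativity to $x^p_v \circ x^q_g$ is the dual incarnation of exactly that sign trick, but you package everything as a presentation: a surjection $k\llbracket x^p \rrbracket/\I \to \brhs$ — correctly justified, since $x^{p_1}\circ\cdots\circ x^{p_n} = \B_n(x^{p_1}\otimes\cdots\otimes x^{p_n})$ by associativity of the braidization and $\B_n$ is surjective — followed by injectivity via a component-by-component count of stabilizer invariants. The obstacle you flag in the stabilizer analysis dissolves cleanly: since $\zz$ is abelian, $B_n$ acts on $G^n$ through the projection $B_n \to S_n$, every stabilizer element acts on $H^\ast_{\bg_0}$ as a permutation composed with componentwise $\rho$-twists, and all such twists commute, so the conjugate $c^{-1}b_j^2c$ acts precisely as $\rho(g)$ at the transported position; transitivity of $S_k$ on the $e$-positions then kills $v$-types everywhere in a mixed component, and your invariant counts ($\sym^n H_e^\ast$ for the pure-$e$ component, $\sym^k H_i^\ast \otimes \sym^{n-k} H_g^\ast$ for components with $n-k \geq 1$ entries $g$) match the monomial count in the quotient exactly. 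What your route buys: bijectivity is established by an honest dimension count rather than an appeal to ``the identification,'' and since your map is multiplicative you in fact obtain the stronger conclusion that the isomorphism is one of algebras, whereas the paper claims only vector spaces; the paper's evaluation argument is shorter and needs neither the ring structure of Theorem~\ref{thm:ringmap} nor the orbit--stabilizer decomposition. One small point to make explicit in your well-definedness step: you also need $x^q_g \circ x^p_v = 0$ (immediate, as it equals $-x^p_v \circ x^q_g$), so that an arbitrary monomial containing both types vanishes after commuting the $i$-type factors out of the way to create an adjacent $v$--$g$ pair.
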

 
\begin{proof}
  Suppose $X \in \brnhs$. Then it is symmetric on $H^G$ and $H_e$ by 
  Propositions~\ref{pro:untwisted} and \ref{pro:invarsym}.
  In view of Proposition~\ref{pro:z2FA}, it remains to consider the value of 
  $X$ on vectors $w \in \hn$ that has at least one factor $w_g \in H_g$ and
  another $w_v \in H_v$. 
  By linearity it is enough to assume that $w$ has only one term.
  Then $w$ can be written as one of the following two forms.

  \[
    w = \dots \otimes w_v \otimes w_{i_1} \otimes \dots \otimes w_{i_k}
    \otimes w_g \otimes \dots
  \]
  or
  \[
 w = \dots \otimes w_g \otimes w_{i_1} \otimes \dots \otimes w_{i_l}
    \otimes w_v \otimes \dots
  \]
  where $w_{i_j}$ denotes vectors in $H_i$.

  We use Corollary~\ref{cor:braided} and consider only the first case. 
  The other one is similar.
  \begin{align*}
    X(w) &= X(\dots \otimes w_v \otimes w_{i_{1}} \otimes \dots \otimes w_{i_k}
    \otimes w_g \otimes \dots) \\
    &= X(\dots \otimes w_v \otimes w_g \otimes \dots) 
  \end{align*}
  by $B_n$ invariance
    and the fact that $H_i$ is invariant under the action of $\zz$. 
Since $w_v$ is in $H_e$, we have
\begin{align*}
  X(w) &= X(\dots \otimes w_g \otimes w_v \otimes \dots) \\
       &= -X(\dots \otimes w_v \otimes w_g \dots) = -X(w)
\end{align*}
since $w_v$ is in the eigenspace of $g$ with eigenvalue $-1$.
Hence we have 
\[
X(w) = 0 
\]
if $w$ has factors of $H_v$ and $H_e$ together.
This implies that any element in $\brnhs$ does not have any term
that has both $x_v^p$ and $x_g^q$ as its factors.
The proposition follows by the identification of symmetric $n$-linear forms and
homogeneous polynomials of degree $n$.
\end{proof}

Once we have a pre-$\zz$-Frobenius manifold structure on $H=H_i \oplus H_v \oplus H_g$, 
we have ordinary Frobenius manifold structures on $H_e = H_i \oplus H_v$ and 
$H^G = H_i \oplus H_g$ with induced potentials. On the other hand, we 
may be able to produce a pre-$\zz$-Frobenius manifold
starting from two ordinary Frobenius manifolds in the following way.

Let $(H_e, \eta_e, Y_e)$ and $(H^G, \eta^G, Y^G)$ be formal Frobenius manifolds.
Suppose that there is a vector space $H_i$ and two linear injections $\iota_e : H_i \to H_e$
and $\iota^G : H_i \to H^G$ such that $(\iota_e)^\ast Y_e = (\iota^G)^\ast Y^G := Y_i$, 
and $(\iota_e)^\ast \eta_e = (\iota^G)^\ast \eta^G := \eta_i$. 
Fix decompositions $H_e = H_i \oplus H_v$ and $H^G = H_i \oplus H_g$. Write
$Y_e = Y_i + Y_v$ and $Y^G = Y_i + Y_g$, and
$\eta_e = \eta_i + \eta_v$ and $\eta^G = \eta_i + \eta_g$
where $(\iota_e)^\ast Y_v = (\iota^G)^\ast Y_g = 0$
and $(\iota_e)^\ast \eta_v = (\iota^G)^\ast \eta_g = 0$.
Let $H := H_i \oplus H_v \oplus H_g$ be a $\mathbb{Z}/2\mathbb{Z}$-graded
$\mathbb{Z}/2\mathbb{Z}$-module as in Proposition~\ref{pro:z2FA}.
Set $Y := Y_i + Y_v + Y_g$ and $\eta := \eta_i + \eta_v + \eta_g$,
extending by zero.

Let $S:= \left\{ i,v,g \right\}$ be the set of the subscript
in the decomposition of $H$ above. 
Then we can talk about the $S^2$-degrees of homogeneous terms of $\eta$.

\begin{thm}[Structure of pre-$\zz$-Frobenius manifolds]
  Suppose that $Y \in [\ths]_e$, and
  $\eta_i$, $\eta_v$, and $\eta_g$ are $S^2$-homogeneous of
  degrees $(i,i)$, $(v,v)$ and $(g,g)$ respectively.
Then
$((H, \rho), \eta, Y)$ is a pre-$\zz$-Frobenius manifold.
Conversely, the $\eta$ and $Y$ of any pre-$\zz$-Frobenius manifold
$((H,\rho), \eta, Y)$
is of this form,
and uniquely determined by the metrics and potentials of the 
two Frobenius manifolds it contains.
\label{thm:structure}
\end{thm}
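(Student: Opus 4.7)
The plan is to verify both directions by reducing everything to the combinatorial description of braided multilinear forms in Proposition~\ref{pro:z2zbraid} together with the block decomposition of $H_e$ from Proposition~\ref{pro:z2FA}. The central observation is that in the $\zz$ case, braidedness is captured entirely by the vanishing of monomials containing both $x_v^p$ and $x_g^q$ as factors, so checking $Y \in \brhs$ reduces to a transparent monomial-support condition.

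For the forward direction, I would first verify that $\eta$ is a $G$-invariant nondegenerate symmetric bilinear form that preserves the $G$-grading. Symmetry and $G$-grading preservation are immediate from the $S^2$-degree hypothesis together with $g^2 = e$. $G$-invariance reduces to three blockwise checks: trivial on $H_i$; on $H_v$ the signs $(-1)(-1) = 1$ cancel; on $H_g$ self-invariance forces $\rho(g)$ to act trivially. Nondegeneracy of $\eta$ follows from Lemma~\ref{lem:nondegenerate} applied blockwise, using the nondegeneracies of $\eta_e$ on $H_e$ and $\eta^G$ on $H^G$. Next I would upgrade the hypothesis $Y \in [\ths]_e$ to $Y \in [\brhs]_e$: by Proposition~\ref{pro:z2zbraid} this is equivalent to the absence of mixed $x_v^p x_g^q$ monomials, which holds by construction since $Y_v$ is built from variables dual to $H_e = H_i \oplus H_v$ only and $Y_g$ from variables dual to $H^G = H_i \oplus H_g$ only. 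Finally, the WDVV equations for the two contained Frobenius manifolds come for free once we observe that $\ies Y = Y_i + Y_v = Y_e$ and $\igs Y = Y_i + Y_g = Y^G$, because $\ies$ annihilates the $x_g$ variables and $\igs$ annihilates the $x_v$ variables.

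For the converse, given a pre-$\zz$-Frobenius manifold $((H, \rho), \eta, Y)$, the decomposition of $\eta$ falls out of the axioms: $G$-grading preservation restricts $\eta$ to $(H_e \otimes H_e) \oplus (H_g \otimes H_g)$ (as $e \cdot e$ and $g \cdot g$ are the only products equal to $e$), and within $H_e \otimes H_e$ the cross pairings between $H_i$ and $H_v$ vanish by $G$-invariance, since $\eta(v_i, v_v) = \eta(\rho(g) v_i, \rho(g) v_v) = -\eta(v_i, v_v)$, yielding the claimed $S^2$-degrees. For $Y$, Proposition~\ref{pro:z2zbraid} combined with $Y \in [\brhs]_e$ expresses $Y$ as a power series in $\{x_i^p, x_v^p, x_g^p\}$ with no mixed $x_v x_g$ monomials, and the decomposition $Y = Y_i + Y_v + Y_g$ is obtained by grouping monomials into pure $x_i$, those with at least one $x_v$ and no $x_g$, and those with at least one $x_g$ and no $x_v$. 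The identifications $\ies Y = Y_e$ and $\igs Y = Y^G$, together with the analogous restrictions of $\eta$, match these three pieces with the two contained Frobenius manifold structures and simultaneously establish uniqueness.

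The main obstacle, and the step where the theorem genuinely uses the $\zz$ case, is showing that the naive sum $Y_i + Y_v + Y_g$ actually lands in $\brhs$ rather than merely in $\ths$. Proposition~\ref{pro:z2zbraid} does all the real work here, translating abstract braided-commutativity into the concrete monomial-disjointness of $x_v$ and $x_g$; without this translation there would be no obvious reason for the sum of two compatible Frobenius manifold potentials to extend to a braided potential. For groups beyond $\zz$ the braiding relations become much more intricate, so I expect an analogous structure theorem in general to require strictly more compatibility data between the untwisted and invariant Frobenius manifolds than mere agreement on a common subspace.
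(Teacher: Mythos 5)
Your proposal is correct and follows essentially the same route as the paper's proof: both directions hinge on Proposition~\ref{pro:z2zbraid} to reduce braidedness of $Y$ to the absence of mixed $x_v x_g$ monomials, the converse uses the identical sign computation $\eta_{iv}(w_i,w_v) = -\eta_{iv}(w_i,w_v)$ to kill the cross term, and nondegeneracy is obtained blockwise from Lemma~\ref{lem:nondegenerate} exactly as in the paper. Your explicit blockwise verification of $G$-invariance and the closing remark about general $G$ are slightly more detailed than the paper's text, but the argument is the same.
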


\begin{proof}
  We first prove that $((H,\rho), \eta, Y)$ constructed as above
  is a pre-$\zz$-Frobenius manifold.
By construction, $\rho$ is self-invariant, and $\eta$ is $\zz$-invariant,
symmetric, and preserves the $\zz$-grading.
Since the $S^2$-degree of $\eta_g$ is $(g,g)$, the condition of 
Lemma~\ref{lem:nondegenerate} is satisfied for $\zz$-degree $e$
by the nondegeneracy of $\eta_e$. The $S^2$-degrees of $\eta_i$
and $\eta_g$, and the nondegeneracy of $\eta^G$ also implies 
that of $\eta_g$, satisfying the condition of the Lemma for $g$.
It follows that $\eta$ is nondegenerate.

Proposition~\ref{pro:z2zbraid} implies that $Y$ is automatically in
$\brhs$.
It remains to show that $Y_e = \ies Y$ and $Y^G = \igs Y$.
Since we are extending by zero and $(\iota^G)^\ast Y_g = 0$, we have
$\ies Y_g = 0$.
With the completely analogous reasoning, we also have $\igs Y_v = 0$.

To prove the converse, we first consider $\eta$.
Writing the $S^2$-degrees as subscripts, we have the decomposition
\[
  \eta = \eta_{ii} + \eta_{iv} + \eta_{ig} + \eta_{vv} + \eta_{vg} + \eta_{gg} \, .
\]
Since $\eta$ preserves the $G$-degree, we have $\eta_{ig} = \eta_{vg} = 0$.
To see that $\eta_{iv} = 0$, take $w_i \in H_i$ and $w_v \in H_v$.
Using the $\zz$-invariance of $\eta$, we have
\[
  \eta_{iv}(w_i , w_v) = \eta_{iv}(g \cdot w_i, g \cdot w_v) 
  = \eta_{iv}(w_i, -w_v) = - \eta_{iv}(w_i, w_v) \, .
\]
For $Y$, let $Y_i$ be the sum of terms that only have factors of $S$-degree $i$,
$Y_v$ the sum of terms that have at least a factor of $S$-degree $v$,
and $Y_g$ the sum of terms that have at least a factor of $S$-degree $g$.
Proposition~\ref{pro:z2zbraid} implies that
every term of $Y$ is uniquely a term of either $Y_i$, $Y_v$ or $Y_g$.
Hence we have the unique decomposition of $Y$ as
\[
Y = Y_i + Y_v + Y_g \, .
\]
Note that $\ies Y_g = \igs Y_v =0$, and $(\iota_e)^\ast Y_v = (\iota^G)^\ast Y_g = 0$
by construction.

Uniqueness is obvious from the decompositions of $\eta$ and $Y$.
\end{proof}

 \subsection{$A_n$ and $D_n$ Singularities and Their Milnor Rings} 
We review the concept of Milnor rings and their
Frobenius algebra structures for certain singularities.
See \cite{singularity} for details.

 We consider two series of polynomials called $A_n$ and $D_n$.
\begin{align*}
  A_{n} &:\frac{1}{n+1} z^{n+1} \\
  D_n &:\frac{1}{2} xy^2 +\frac{1}{2n-2} x^{n-1}
  \label{andn}
\end{align*}
Note that these polynomials define complex valued functions on $\mathbb{C}$
or $\mathbb{C}^2$, and they have isolated singularities (critical points) 
at the origin.

Let $f$ be one of the polynomials. Then its 
\emph{Milnor ring(Jacobian ring or local ring) $\mathcal{A}_f$} is given by
the quotient ring of the polynomial ring by the Jacobian ideal $J_f$ of $f$.
For example, the Milnor ring of $D_4$ is $\mathbb{C}[x,y]/(y^2+x^2,xy)$.
Note that the Milnor ring for $A_n$ is generated by $\{ 1, z, \dots, z^{n-1}\}$
and for $D_n$ by $\{ 1,x, \dots, x^{n-2}, y\}$ as vector spaces.
Hence their dimensions are equal to $n$.

$\mathcal{A}_f$ has the structure of a Frobenius algebra with counit $\epsilon$ as follows:
\begin{align*}
  A_n :\  \epsilon (z^i) &= \delta_{i,n-1} \quad \text{for } i = 0, \dots, n-1 \ , \\
  D_n :\  \epsilon (x^i) &= \delta_{i, n-2} \quad \text{for } 
  	i = 0, \dots, n-2 \ \quad \text{and} \\
          \epsilon (y) &= 0 
\end{align*}

\subsection{Frobenius Manifold Structures on Milnor Rings}
\label{ssec:fm}
We review the Frobenius manifold structures associated with
the universal unfoldings of the singularities $A_n$ and $D_n$.
For details, see \cite{dvv} and \cite{manin}.

Any vector in the Milnor rings of $A_n$ and $D_n$ can be uniquely written as follows.
\begin{align*}
  A_n &: k_{n-1}z^{n-1} + \dots +k_1 z + k_0 \\
  D_n &: l_{n-2}x^{n-2} + \dots + l_1 x + l_0 + l_\ast y 
\end{align*}
These identify their Milnor rings with $\mathbb{C}^n$,
whose coordinates can be written 
as $(k_{n-1}, \dots, k_0)$ for $A_n$ and $(l_{n-2}, \dots, l_0, l_\ast)$ for $D_n$.
It follows that we have the following identifications of vectors.
\begin{align*}
  A_n &: \partial_{k_i} \leftrightarrow z^i \\
  D_n &: \partial_{l_j} \leftrightarrow x^j \ \text{ and }\  
  \partial_{l_\ast} \leftrightarrow y
\end{align*}

We also associate a deformation of the defining polynomials of $A_n$ and $D_n$
tautologically for each vector in the Milnor ring simply by addition.
\begin{align*}
  A_n &:\frac{1}{n+1} z^{n+1} + k_{n-1}z^{n-1} + \dots +k_1 z + k_0 := F_{A_n}\\
  D_n &:\frac{1}{2} xy^2 + \frac{1}{2n-2}x^{n-1} +
  l_{n-2}x^{n-2} + \dots + l_1 x + l_0 + l_\ast y 
  := F_{D_n}
\end{align*}
We define multiplication rules between vector fields in the way that each tangent
space is isomorphic to the Jacobian ring of the deformed polynomial.
To be more precise, the ring of formal vector fields are isomorphic to the following.
\begin{align*}
  A_n &: \mathbb{C}\llbracket k_0, \dots, k_{n-1}\rrbracket [z]/JF_{A_n} \\
  D_n &: \mathbb{C}\llbracket l_0, \dots, l_{n-2}, l_\ast \rrbracket [x,y]/JF_{D_n} 
\end{align*}
Here $JF_{A_n}$ and $JF_{D_n}$ denote the Jacobian ideal of $F_{A_n}$ and $F_{D_n}$
with respect to $z$ for $A_n$ and $x,y$ for $D_n$.

Next we describe the metrics. 
The metrics are given by the residues.
Their multi-variable version can be found, for example, in \cite{gh}.
We divide by $2$ for $D_n$ for normalization purposes.
Fix a tangent space, take $f,g$ in the Jacobian ring, and let $F$ be
equal to either $F_{A_n}$ or $F_{D_n}$.
$P$ runs through all the poles.
\begin{align*}
  A_n &: \eta(f,g):= \sum_P \res_P \frac{fg}{F'} dz \\ 
  D_n &: \eta(f,g):= \frac{1}{2} \sum_P \res_P 
  \frac{fg}{\frac{\partial F}{\partial x}\frac{\partial F}{\partial y}} dx\ dy
\end{align*}
One can show that the metric at the origin is the same as the one
in the previous section. Thus we can identify the undeformed Milnor ring as
the tangent space at the origin.

Consider the odd dimensional ones in $A_n$ series.
First we identify a series of Frobenius manifolds called $B_{n-1}$ 
as submanifolds in both $A_{2n-3}$ and $D_n$. For generalities about
Frobenius submanifolds, see \cite{stra}.

Consider the $(n-1)$-dimensional submanifold with coordinates $a_i := k_{2i} = 
l_i, i=0, \dots, n-2$, that is, we are considering hypersurfaces given by 
$k_{2i-1} = 0$ and  $l_\ast=0$ . The algebra of tangent vectors at any point
on the submanifold is generated by $z^2$ or $x$. One can show that
the ring homomorphism of tangent spaces 
given by $z^2 \mapsto x$ at each point of the submanifold
is an isomorphism that also preserves the metric. 
 We will use $(a_0, a_1, \dots, a_{2n-4})$ and
$(a_0, a_2, \dots, a_{2n-4}, a_\ast)$ for coordinates of $A_{2n-3}$ and $D_n$, respectively,
so that $(a_0, a_2, \dots, a_{2n-4})$ will be the coordinates for $B_{n-1}$.

Before considering the potential, note that the coordinates we are using are
not flat, meaning that the metric between the coordinate vector fields are not 
constants. We first describe how to get a set of flat coordinates for $A_n$.
If we set
\[ \frac{w^{n+1}}{n+1} = F_{A_n} 
  = \frac{1}{n+1} z^{n+1} + k_{n-1}z^{n-1} + \dots +k_1 z + k_0 \ ,
  \]
then we have the Laurent series expansion
\[ z = w + \frac{t_{n-1}}{w} + \frac{t_{n-2}}{w^2} + \dots + 
  \frac{t_0}{w^n} + O(\frac{1}{w^{n+1}}) \ .
  \]
  It is known that $a_i$ is a polynomial of $t_i, t_{i+2}, \dots, t_{n-1}$ for each $i$,
  where the linear term is equal to $-t_i$ and the higher order terms are functions of
  $t_{i+2}, \dots, t_{n-1}$
  \cite{dvv}.
Thus this gives us a global diffeomorphism, 
and the inverse map can be found easily.
It is also easy to see that the coordinate 
vector fields $\partial_i := \partial/\partial t_i$ can be identified with
$\partial_i F_{A_n}$. Moreover, it can be shown that the metric $\eta_{ij}:=
\eta(\partial_i, \partial_j)$ is constant by change of coordinates,
hence its values can be obtained
by looking at the origin as shown in the previous subsection.

The uniqueness of the series expansion implies that the submanifold $B_{n-1}$
is also the hyperplane defined by the equations $t_{2i-1} = 0$ in the flat coordinates.
Thus $\{t_{2i}\}$ constitutes a flat coordinate system for $B_{n-1}$. Moreover, 
it can be shown that $\{t_{2i}, t_\ast:= - a_\ast\}$ is also a flat coordinate system for
$D_n$.

Now we are ready to describe the potentials. It is known that the potential for
$A_n$ is a polynomial of degree $\leq n+2$ in the flat coordinates. Its explicit
form can be determined from the ring structure and the metric. The potential
for $B_{n-1}$ can be obtained by setting $t_{2i-1} =0$
from the potential for $A_{2n-3}$. 
Finally, it is known that the potential for $D_n$ can be obtained by
adding the term $(-1/2)  a_0 t_\ast^2$ to the $A_{2n-3}$ potential
and setting $t_{2i-1} = 0$, where $a_0$ is the constant term
in the usual coordinates, and is a function of the flat coordinates.

We list some explicit formulas for $n=3$ and $4$.
Let $\Phi_{A_{2n-3}}$ denote the potential for $A_{2n-3}$, etc.

\begin{ex}[$n=3$]
 Flat coordinates and their inverse:
 \begin{align*}
   t_2 &= -a_2 & a_2 &= -t_2 \\
   t_1 &= -a_1 & a_2 &= -t_1 \\
   t_0 &= -a_0 + \frac{1}{2} a_2^2 & a_0 &= -t_0 + \frac{1}{2} t_2^2
 \end{align*}
 \begin{align*}
   \Phi_{A_3} &= -\frac{1}{2}t_0^2t_2 - \frac{1}{2}t_0t_1^2 - \frac{1}{4} t_1^2t_2^2
   - \frac{1}{60}t_2^5\\
   \Phi_{D_3} &= -\frac{1}{2} t_0^2 t_2 + \frac{1}{2} t_0 t_\ast^2 
   - \frac{1}{4} t_2^2 t_\ast^2 -\frac{1}{60} t_2^5
 \end{align*}
 \end{ex}

 \begin{ex}[$n=4$]
  Flat coordinates and their inverse: 
  \begin{align*}
    t_4 &= -a_4 & a_4 &= -t_4\\
    t_3 &= -a_3 & a_3 &= -t_3\\
    t_2 &= -a_2 + \frac{3}{2} a_4^2 & a_2 &= -t_2 + \frac{3}{2} t_4^2\\
    t_1 &= -a_1 + 2 a_3 a_4 & a_1 &= -t_1 + 2 t_3 t_4\\
    t_0 &= -a_0 + \frac{1}{2} a_3^2 +a_2 a_4 -\frac{7}{6} a_4^3 & a_0 &=
    -t_0 + \frac{1}{2} t_3^2 + t_2 t_4 - \frac{1}{3} t_4^3
  \end{align*}
  \begin{align*}
    \Phi_{A_5} &= - \frac{1}{2} t_0^2 t_4 - t_0 t_1 t_3 - \frac{1}{2} t_0 t_2^2
    - \frac{1}{2} t_1^2 t_2 - \frac{1}{4} t_1^2 t_4^2 - t_1 t_2 t_3 t_4  
    - \frac{1}{6} t_1 t_3^3 \\
    &- \frac{1}{6} t_2^3 t_4 - \frac{1}{2} t_2^2 t_3^2
    - \frac{1}{6} t_2^2 t_4^3 - \frac{1}{2} t_2 t_3^2 t_4^2 - \frac{1}{6} t_3^4 t_4
    - \frac{1}{8} t_3^2 t_4^4 - \frac{1}{210} t_4^7\\ 
    \Phi_{D_4} &= - \frac{1}{2} t_0^2 t_4 - \frac{1}{2} t_0 t_2^2 
    + \frac{1}{2} t_0 t_\ast^2
    - \frac{1}{6} t_2^3 t_4
    - \frac{1}{2} t_2 t_4 t_\ast^2
    - \frac{1}{6} t_2^2 t_4^3 
    + \frac{1}{6} t_\ast^2 t_4^3
    - \frac{1}{210} t_4^7
  \end{align*}
 \end{ex}

 \subsection{$\zz$-Frobenius Algebra for $A_{2n-3}$}
 \label{ssec:zzfa}
 We review the $\zz$-Frobenius algebra structure for $A_{2n-3}$ 
 as described in \cite{orbifolding}.
 Call it $H$.
 Let $\I$ be the ideal of $\C[z,y]$ generated by 
 $\left\{ z^{2n-3}, yz, y^2 + z^{2n-4} \right\}$. 
 Then its ring structure is 
 \[
   H = \C[z,y] / \I \, . 
 \]
Note that this is a $2n-2$ dimensional vector space having 
$\left\{ 1, z, \dots, z^{2n-4}, y \right\}$
as a basis.

The $\zz$-graded $\zz$-module structure is the following.
\begin{align*}
  H_i &= \Span \left\{ 1, z^2, \dots, z^{2n-4} \right\} \\
  H_v &= \Span \left\{ z, z^3, \dots, z^{2n-5} \right\} \\
  H_g &= \Span \left\{ y \right\} 
\end{align*}

If we fix an ordered basis for $H$ as 
$\left( 1, z^2, \dots, z^{2n-4}, z, z^3, \dots, z^{2n-5}, y \right)$,
then $\eta$ is given by the following matrix.
\[
\left(
\begin{array}{c|c|c}
  0 \cdots 1 & \raisebox{-15pt}{{\huge\mbox{{$0$}}}} & 0      \\ [-3.5ex]
    \iddots   &                                       & \vdots \\ 
  1 \cdots 0 &                                       & 0      \\ \hline
  \raisebox{-15pt}{{\huge\mbox{{$0$}}}} & 0 \cdots 1  & 0      \\ [-3.5ex]
                                        &   \iddots    & \vdots \\
                                        & 1 \cdots 0  & 0      \\ \hline
  0 \cdots 0 & 0 \cdots 0		             & -1			
\end{array}
\right)
\]

It is straightforward to check that $H$ thus defined is a $\zz$-Frobenius algebra.

\begin{rem}
Note that $\eta$ satisfies the conditions of Theorem~\ref{thm:structure},
and the restriction of $\eta$ on $H_e$ coincides with the metric on
the Frobenius algebra of $A_{2n-3}$.
\label{rem:metric}
\end{rem}

\begin{rem}
  We observe that $H^{\zz}$ is isomorphic to the Frobenius algebra of $D_n$
via the map $z^2 \mapsto x$. Note that it also preserves the metric.
\end{rem}

\subsection{$\mathbb{Z}/2\mathbb{Z}$-Frobenius Manifold for $A_{2n-3}$}
\label{ssec:zzfm}

We use the flat coordinates $\{t_0, \dots, t_{2n-3}, t_\ast\}$
as defined in Section~\ref{ssec:fm} on the underlying $\zz$-graded $\zz$-module of 
the $\zz$-Frobenius algebra of $A_{2n-3}$ as described in the previous section.
Theorem~\ref{thm:structure} gives us a pre-$\mathbb{Z}/2\mathbb{Z}$-Frobenius
manifold structure on it.
This is because we can obtain the potential for $B_{n-1}$ by restricting 
either that of $A_{2n-3}$ or that of $D_n$, and the condition
on the metric is also met by Remark~\ref{rem:metric}.
Since $t_\ast$ always appear as $t_\ast^2$ in the terms of 
$\Phi_{D_n}$, the $G$-degree requirement for $Y$ is also satisfied.
We identify $H_e$ as the Frobenius manifold of $A_{2n-3}$ and
$H^G$ as that of $D_n$.

To see that this is a $\zz$-Frobenius manifold, note that we have
\[ Y^3_g = \frac{1}{2} t_0 t_\ast^2 \] regardless of the value of $n$, from the discussions in
Section~\ref{ssec:fm}.
Considering the metric, 
we notice that this term implies all the multiplication
rule that involves $\partial_\ast \leftrightarrow -y$ (at the origin)
described in Section~\ref{ssec:zzfa}, up to the factor of $1/6$.
Of course, $Y^3_i + Y^3_v$ is just the degree $3$ term of $A_{2n-3}$.
Thus these terms accounts for the multiplication rules for $\partial_i \leftrightarrow -z^i$
(at the origin),
again up to the factor of $1/6$. Hence it contains the same $\zz$-Frobenius algebra
as in Section~\ref{ssec:zzfa}, up to the constant factor of $1/6$.

\section{Prospects}

\subsection{A Differential Geometry of $G$-braided Spaces}
Although we concentrated on the formal aspects of Frobenius manifolds
so far, their theory becomes much richer once we start considering
their differential geometric aspects\cite{du, manin}.
One can then ask if we can extend this part of their theory
to our case. This leads to the question of developing a differential
calculus on the ring of braided multilinear forms.

The usual way of introducing a differential calculus on the
ring of functions in noncommutative geometry is
to define a derivation that satisfies the ordinary
Leibniz rule on the ring.
But in our case it is more natural to define a 
differential operator in terms of multilinear forms.
Namely, the usual rule of differentiation on polynomials
can be transferred into one on symmetric multilinear forms
via their identifications.
Once we extend the same rule to all multilinear forms, then
it turns out that this rule restricted to braided multilinear forms
yields braided multilinear forms.

The problem with this approach is in that 
this definition of differentiation rule need not satisfy
the usual Leibniz rule.
This is not surprising since the usual rule does not seem to take into account
any braided structure.
Namely, the differential operator and the functions freely change their positions,
except possibly gaining the minus sign.
It seems that the usual Leibniz rule lives in symmetric monoidal categories.

Hence it will be interesting to generalize the Leibniz rule so that it would
work in braided monoidal categories.
In fact, one such generalization can be found in \cite[Chapter 10]{majid}
in his version of braided geometry.
We will need a version that would work for our purposes.


\begin{thebibliography}{99}

\bibitem{acv}
  D. Abramovich, A. Corti and A. Vistoli, 
  {\it Twisted bundles and admissible covers}, Comm. Algebra
  {\bf 31} (2003), 3547 - 3618

\bibitem{oqc}
  W. Chen, Y. Ruan, {\it Orbifold Gromov-Witten theory}, Contemp. Math. 
  {\bf 310} (2002), 25-85

\bibitem{croc}
  W. Chen, Y. Ruan, {\it A new cohomology theory of orbifold}, Commun. 
  Math. Phys. {\bf 248} (2004), 1-31

\bibitem{costello}
  K. Costello, {\it Higher genus Gromov-Witten invariants as
  genus zero invariants of symmetric products}, Ann. Math. 
  {\bf 164} (2006), 561-601

 \bibitem{dvv}
  R. Dijkgraaf, E. Verlinde and H. Verlinde, {\it Topological strings in $d < 1$},
  Nucl. Phys. {\bf B 352} (1991), 59-86
  
\bibitem{du}
  B. Dubrovin. {\it Geometry of 2D topological field theories, 
  Integrable systems and
  quantum groups}, Lecture Notes in Math.
  {\bf 1620}, Springer (1996), 120-348

\bibitem{singularity}
  W. Ebeling, {\it Functions of Several Complex Variables and Their Singularities},
  AMS (2007)
  
\bibitem{fg}
  B. Fantechi and L. G\"ottsche, {\it Orbifold cohomology for global 
  quotients}, Duke Math. J.
  {\bf 117} (2003), no. 2, 197-227

  \bibitem{gh}
  P. Griffiths and J. Harris, {\it Principles of Algebraic Geometry},
  John Wiley \& Sons, Inc. (1978)

  \bibitem{jkk}
  T. Jarvis, R. Kaufmann, and T. Kimura, {\it Pointed admissible G-covers and 
  G-cohomological Field  theories}, Compos. Math. {\bf 141} (2005), 926-978

\bibitem{jk}
  T. Jarvis and T. Kimura, {\it Orbifold quantum cohomology of the classifying
  space of a finite group}, in {\it Orbifolds in Mathematics and
  Physics}, Contemp. Math. {\bf 310}, AMS (2002), 123-134

  \bibitem{orbifolding}
  R. Kaufmann, {\it Orbifolding Frobenius algebras}, Internat. J. of Math. 
  {\bf 14} (2003), 573-619

\bibitem{sq}
  R. Kaufmann, {\it Second quantized Frobenius algebras}, Commun. Math.
  Phys. {\bf 248} (2004), 33-83
\bibitem{dt}
  R. Kaufmann, {\it The algebra of discrete torsion}, J. Algebra. {\bf 282}
  (2004), 232-259
  \bibitem{kp} 
  R. Kaufmann and D. Pham, {\it The Drinfel'd double and twisting in stringy
  orbifold theory}, Int. J. Math. {\bf 20} (2009), no. 5, 623-657

  \bibitem{km}
  M. Kontsevich, Y. Manin, {\it Gromov-Witten classes, quantum cohomology,
  and enumerative geometry}, Comm. Math. Phys. {\bf 164} (1994), 525-562  

   \bibitem{majid}
  S. Majid, {\it Foundations of Quantum Group Theory}, Cambridge University
  Press (1995)

\bibitem{manin}
  Y. Manin, {\it Frobenius Manifolds, Quantum Cohomology, and Moduli 
  Spaces}, AMS (1999)

\bibitem{moer}
  I. Moerdijk, {\it Orbifolds as groupoids: an introduction}, 
  in {\it Orbifolds in Mathematics and
  Physics}, Contemp. Math. {\bf 310}, AMS (2002), 205-222 

\bibitem{ms}
  G. Moore and G. Segal, 
  {\it D-branes and K-theory in 2D topological field theory} in
  {\it Dirichlet Branes and Mirror Symmetry}, AMS and Clay Math. Ins. (2009)

\bibitem{serre}
  J-P. Serre, {\it Linear Representations of Finite Groups} trans. L. Scott,
  Springer (1977)

\bibitem{matsu}
  I. Shapiro and T. Matsumura,
  {\it Notes on braided calculus on $G$-graded $G$-modules},
  personal communication.

 \bibitem{stra}
  I. Strachan, {\it Frobenius submanifolds}, J. Geom. Phys. 
  {\bf 38} (2001), 285-307

\bibitem{turaev}
  V. Turaev, {\it Homotopy Quantum Field Theory}, EMS (2010)
  
  \end{thebibliography}
\end{document}